\journal{arXiv}
\DeclareMathAlphabet{\mathpzc}{OT1}{pzc}{m}{it}
\DeclareMathOperator*{\cl}{cl}
\DeclareMathOperator*{\tang}{tg}
\DeclareMathOperator*{\high}{\n}
\DeclareMathOperator*{\Div}{div}
\newtheorem{theorem}{Theorem}
\newtheorem{lemma}[theorem]{Lemma}
\newtheorem{corollary}[theorem]{Corollary}
\newtheorem{proposition}[theorem]{Proposition}
\newdefinition{definition}{Definition}
\newdefinition{hypothesis}{Hypothesis}
\newdefinition{remark}{Remark}
\newproof{proof}{Proof}
\newproof{sketch}{Sketch of the Proof}
\def\R{\bm{\mathbbm{R} } }
\def\N{\bm{\mathbbm{N} } }
\def\Hdiv{\mathbf{H}_{\Div}}
\def\eversor{\bm {\widehat{e} } }
\def\nuversor{\bm {\widehat{\nu} } }
\def\ind{\bm {\mathbbm{1} } }
\def\xthilde{\widetilde{\mathbf{x}}}
\def\pxthilde{(\widetilde{\mathbf{x}})}
\def\A{{\mathcal A}}
\def\B{{\mathcal B}}
\def\C{{\mathcal C}}
\def\D{\bm{D} }
\def\deps{\bm{D}^{\epsilon} }
\def\E{\mu}
\def\Eav{\bar{\mu}}
\def\Q{{\mathcal Q}}
\def\W{\mathbf W}
\def\X{\mathbf X}
\def\Y{\mathbf Y}
\def\0{\mathbf 0}
\def\f{\mathbf f}
\def\n{\bm {\widehat{n} } }
\def\u{\mathbf u}
\def\v{\mathbf v}
\def\w{\mathbf{w}}
\def\x{\mathbf x}
\def\y{\mathbf y}
\def\div{\boldsymbol{\nabla\cdot}}
\def\divt{\boldsymbol{\nabla}_{\!\!\scriptscriptstyle T}\boldsymbol{\cdot}}
\def\grad{\boldsymbol{\nabla}}
\def\gradt{\boldsymbol{\nabla}_{\!\!\scriptscriptstyle T}}
\def\bcol{\boldsymbol{:}}
\def\L2div{\mathbf{L^{2}_{div}}}
\def\H1bold{\mathbf{H^{1}}}
\def\vepsone{\mathbf{v}^{1,\epsilon}}
\def\vepstwo{\mathbf{v}^{\,2,\epsilon}}
\def\pepsone{p^{1,\epsilon}}
\def\pepstwo{p^{\,2,\epsilon}}
\def\p{\mathbf{p}}
\def\vtaneps{\mathbf{v}_{\scriptscriptstyle T}^{\,2,\epsilon}}
\def\vnormeps{\mathbf{v}_{\scriptscriptstyle N}^{\,2,\epsilon}}
\def\vtangeps{\mathbf{v}_{\,\tang}^{2,\epsilon}}
\def\vnormaleps{\mathbf{v}_{\,\n}^{2,\epsilon}}
\def\wtan{\mathbf{w}_{\scriptscriptstyle T}^{2}}
\def\wnorm{\mathbf{w}_{\scriptscriptstyle N}^{2}}
\def\wtangx{\mathbf{w}_{\,\tang (\widetilde{x})}^{2}}
\def\wnormalx{\mathbf{w}_{\,\n (\widetilde{x}\,)}^{2}}
\def\wtang{\mathbf{w}_{\,\tang}^{2}}
\def\wnormal{\mathbf{w}_{\,\n}^{2}}
\def\vone{\mathbf{v}^{1}}
\def\vtwo{\mathbf{v}^{2}}
\def\symgrad{\bm{\mathcal{E} } }
\def\Hpartial{H(\partial_z, \Omega_{2})}
\def\Hboldpartial{\mathbf{H}(\partial_z, \Omega_{2})}
\def\defining{\overset{\mathbf{def}} =}
\def\wone{\mathbf{w}^{1}}
\def\wtwo{\mathbf{w}^{2}}
\def\utwo{\mathbf{u}^{2}}
\def\vtang{\mathbf{v}_{\tang}^{\,2}}
\def\vnormal{\mathbf{v}_{\,\n}^{\,2}}
\def\pone{p^{1}}
\def\ptwo{p^{2}}
\def\veps{\mathbf{v}^{\,\epsilon}}
\def\peps{p^{\,\epsilon}}
\def\UxTtang{U^{T,\tang}(\xthilde)}
\def\UxTnormal{U^{T,\n}(\xthilde)}
\def\UxNtang{U^{N,\tang}(\xthilde)}
\def\UxNnormal{U^{N,\n}(\xthilde)}
\def\UTtang{U^{T,\tang}}
\def\UTnormal{U^{T,\n}}
\def\UNtang{U^{N,\tang}}
\def\UNnormal{U^{N,\n}}
\begin{document}

\begin{frontmatter}

\title{The asymptotic analysis of a Darcy-Stokes system\\
coupled through a curved interface}
\tnotetext[mytitlenote]{This material is based upon work supported by grant 98089 from the Department of Energy, Office of Science, USA and from project HERMES 27798 from Universidad Nacional de Colombia,
Sede Medell\'in.}


\author[mymainaddress]{Fernando A Morales}

\cortext[mycorrespondingauthor]{Corresponding Author}
\ead{famoralesj@unal.edu.co}

\address[mymainaddress]{Escuela de Matem\'aticas
Universidad Nacional de Colombia, Sede Medell\'in \\
Carrera 65 \# 59A--110 - Bloque 43, of 106,
Medell\'in - Colombia}


\begin{abstract}
The asymptotic analysis of a Darcy-Stokes system modeling the fluid exchange between a narrow channel (Stokes) and a porous medium (Darcy) coupled through a $ C^{2} $ curved interface, is presented. The channel is a cylindrical domain between the interface ($ \Gamma $) and a parallel translation of it ($ \Gamma + \epsilon \, \eversor_{N} $). The introduction of a change variable to fix the domain's geometry and the introduction of two systems of coordinates: the Cartesian and a local one (consistent with the geometry of the surface), permit to find a Darcy-Brinkman lower dimensional coupled system as the limiting form, when the width of the channel tends to zero ($ \epsilon \rightarrow 0 $). 
\end{abstract}

\begin{keyword}
fissured media, interface geometry, coupled Darcy-Stokes systems, Brinkman system
\MSC[2010] 80M40 \sep 76S99 \sep 58J05 \sep 76M45
\end{keyword}
\end{frontmatter}



%
%

%
%
%
%
\section{Introduction}   \label{intro}
%
%
%
%
\noindent In this paper we continue the work presented in \cite{ShowMor17}, extending the result to a more general scenario. That is, we find the limiting form of a Darcy-Stokes (see \eqref{Eq Darcy-Stokes System} ) coupled system, within a saturated domain $ \Omega^{\epsilon} $ in $ \R^{N} $, consisting in three parts: a porous medium $ \Omega_{1} $ (Darcy flow), a narrow channel $ \Omega_{2} ^{\epsilon} $ whose width is of order $ \epsilon $ (Stokes flow) and a coupling interface $ \Gamma = \partial\Omega_{1} \cap \partial \Omega_{2}^{\epsilon} $, see Figure \ref{Fig Stream Lines} (a). In contrast with the system studied in \cite{ShowMor17}, where the interface is flat, here the analysis is extended to \textit{curved} interfaces. It will be seen that the limit is a fully-coupled system consisting of Darcy flow in the porous medium
$ \Omega_1 $ and a Brinkman-type flow on the part $ \Gamma $ of its boundary which now takes the form of a $ N-1 $ dimensional manifold.
\newline

\noindent The central motivation in looking for the limiting problem of our Darcy-Stokes system is to attain a new model free of the singularities present in \eqref{Eq Darcy-Stokes System}. These are the narrowness of the channel $ \mathcal{O}(\epsilon) $ and the high velocity of the fluid in the channel $ \mathcal{O}(\epsilon) $; both (geometry and velocity) with respect to the porous medium. Both singularities have substantial negative computational impact at the time of implementing the system, such as numerical instability and poor quality of the solutions. Moreover, when considering the case of curved interfaces, the geometry of the surface intensifies these effects, making even more relevant the search for an approximate singularity-free system as it is done here. 
\newline

\noindent The relevance of the Darcy-Stokes system itself, as well as its limiting form (a Darcy-Brinkman system) is confirmed by the numerous achievements reported in the literature: see \cite{ArbLehr}, \cite{Allaire2009}, \cite{CannonMeyer71} for the analytical approach, \cite{ArboBru}, \cite{GaticaBabuska2010}, \cite{Gatica2009},  \cite{JaffRob05} for the numerical analysis point of view, see \cite{Lamichhane}, \cite{XieXuXue} for numerical experimental coupling and \cite{Quarteroni_Brinkman2011} for a broad perspective and references. Moreover, the modeling and scaling of the problem have already been extensively justified in \cite{ShowMor17}, hence, this work is focused on addressing (rigorously) the interface geometry impact in the asymptotic analysis of the problem. It is important to consider the curvature of interfaces in the problem, rather than limiting the analysis to flat or periodic interfaces, because the fissures in a natural bedrock (where this phenomenon takes place) have wild geometry. In \cite{Dobberschutz2014}, \cite{Dobberschutz2015} the analysis is made using homogenisation techniques for periodically curved surfaces (which is the typical necessary assumption for this theory), in \cite{Neuss2000}, \cite{Neuss2001} the analysis is made using boundary layer techniques. However, no explicit results can be obtained, as usually with these methods. An early and simplified version of the present result can be found in \cite{Morales2}, where incorporating the interface geometry in the asymptotic analysis of a multiscale Darcy-Darcy coupled system is done and a explicit description of the limiting problem is given.
\newline

\noindent The successful analysis of the present work is due to keeping an interplay between two coordinate systems: the Cartesian and a \textit{local} one, consistent with the geometry of the interface $ \Gamma $. While it is convenient to handle the independent variables in Cartesian coordinates, the flow fields in the free fluid region $ \Omega_{2}^{\epsilon} $ are more manageable when decomposed in normal and tangential directions to the interface (the local system). The a-priori estimates, the properties of weak limits, as well as the structure of the limiting problem will be more easily derived with this \textit{double bookkeeping} of coordinate systems, rather than trying to leave behind one of them for good. It is therefore a \textit{strategic mistake} (not a mathematical one, of course) to seek for a transformation flattening out the interface, as it is the usual approach in traces' theory for Sobolev spaces. The proposed method is significantly simpler than other techniques and it is precisely this \textit{simplicity} which permits to obtain the limiting problem's explicit description for a problem of such complexity, as a multiscale Darcy-Stokes system.  
%
%
%
%
\begin{figure} 
	%
	\begin{subfigure}[Original Domain]
			{ \includegraphics[scale = 0.36]{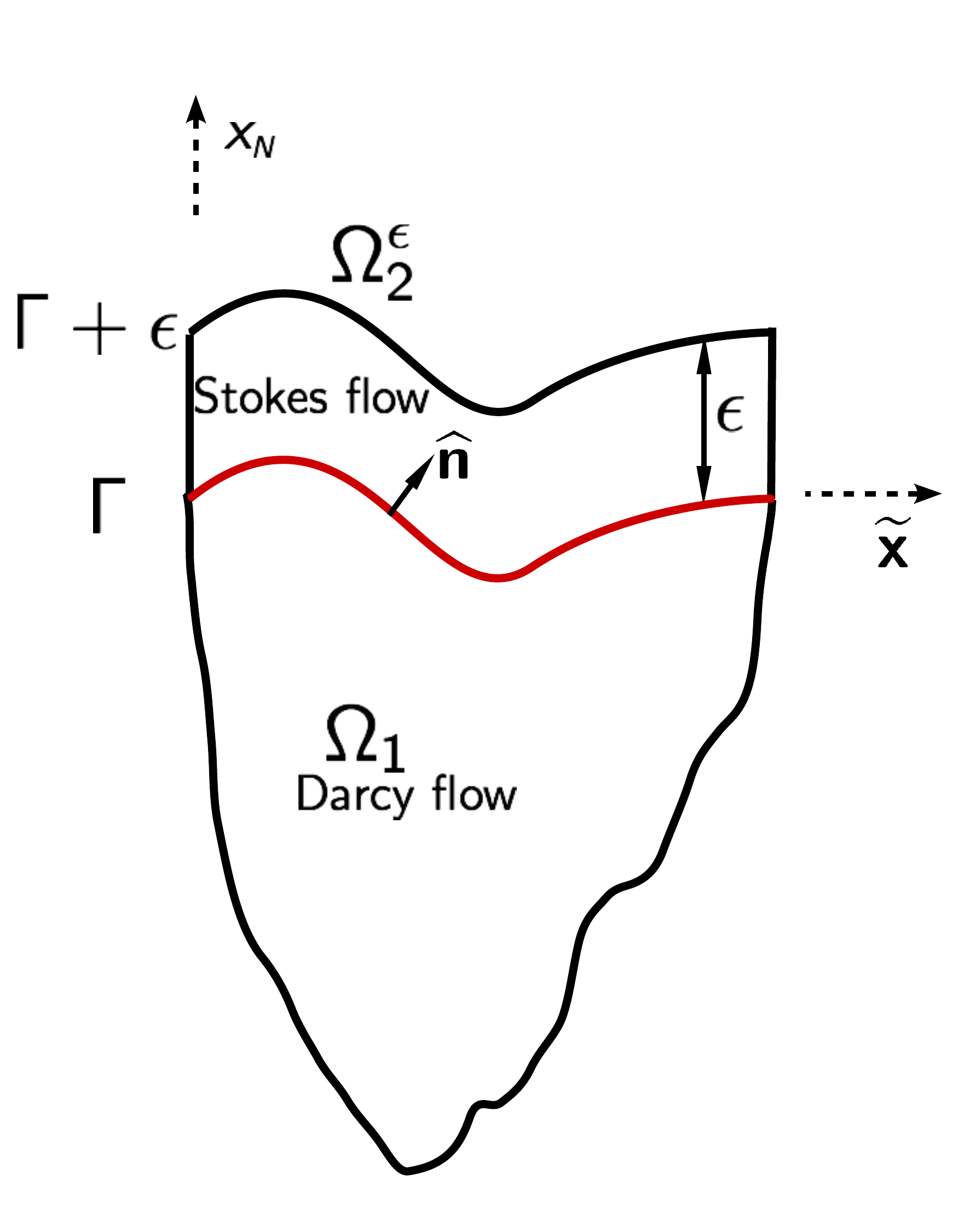} }
	\end{subfigure} 
	~ 
	\begin{subfigure}[Scaled Domain after the change of variable $ \x \mapsto \varphi(\x) $, with $ \varphi $ defined in Equation \eqref{Eq Map to Domain of Reference}.]
			{ \includegraphics[scale = 0.36]{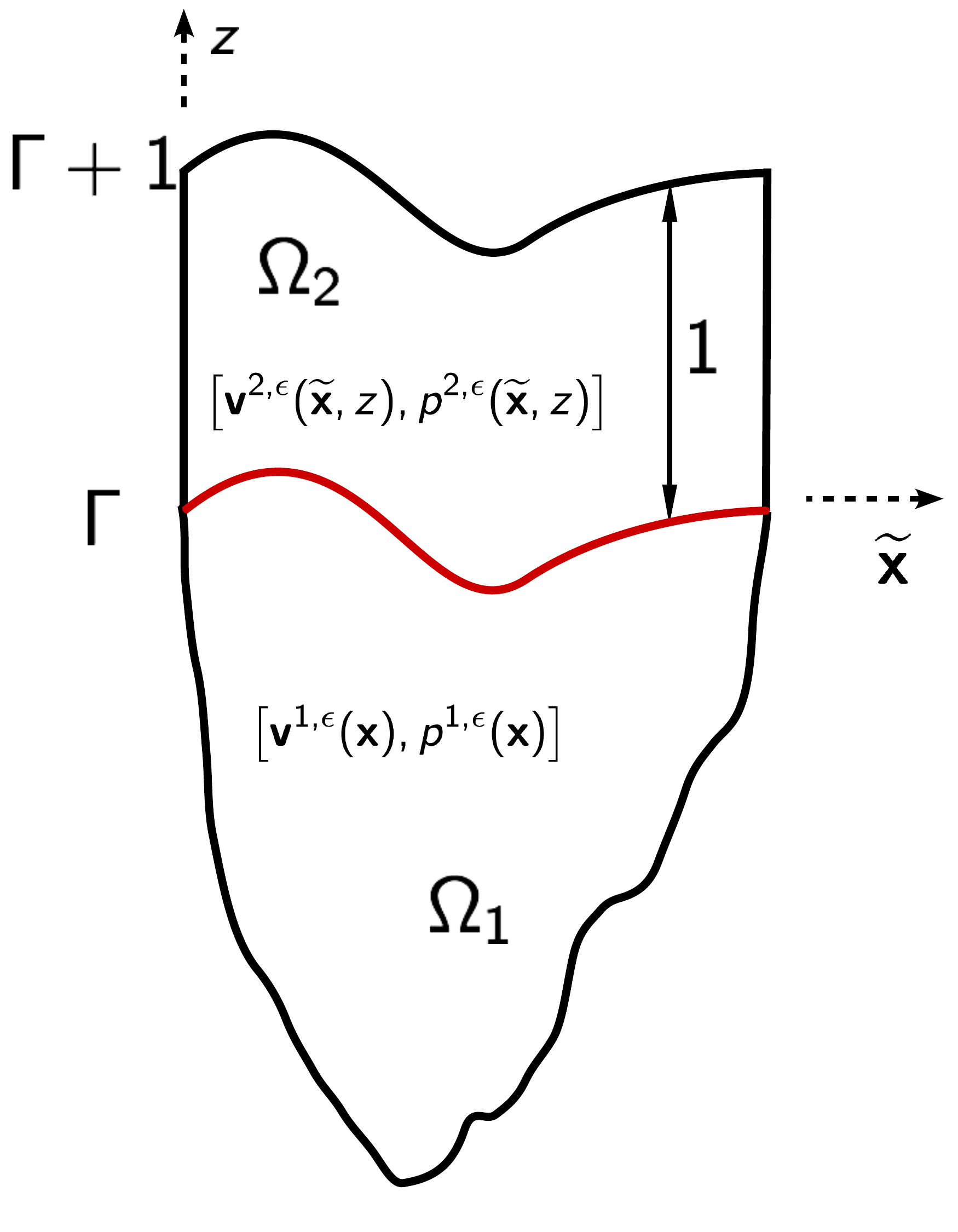} }             
	\end{subfigure} 
	\caption{Figure (a) depicts the original domain with a thin channel on top, where we set the Stokes flow. Figure (b) depicts the domain after scaling by the change of variables $ \x \mapsto \varphi(\x) $, where $ \varphi $ is defined in Equation \eqref{Eq Map to Domain of Reference}. This will be the domain of reference which is used for asymptotic analysis of the problem.}
	\label{Fig Stream Lines}
\end{figure}
%
%

%
%
\subsection*{Notation}
\noindent We shall use standard function spaces (see \cite{Temam79}, \cite{Adams}).
For any smooth bounded region $G$ in $\R^N$ with boundary $\partial
G$, the space of square integrable functions is denoted by $L^{2}(G)$,
and the Sobolev space $H^{1} (G) $ consists of those functions in
$L^2(G)$ for which each of the first-order weak partial derivatives
belongs to $L^2(G)$. The {\em trace} is the continuous linear
function $\gamma:H^{1}(G) \rightarrow L^{2}(\partial G)$ which agrees
with restriction to the boundary on smooth functions, i.e., $ \gamma(w) = w \big\vert_{\partial
G} $ if $ w \in C( \cl (G) ) $. Its kernel is $H_{0}^{1}(G) \defining \{w \in
H^{1}(G): \gamma(w) = 0\}$.  The trace space is $H^{1/2}(\partial G)
\defining \gamma(H^{1}(G))$, the range of $\gamma$ endowed with the
usual norm from the quotient space $H^{1}(G)/H_{0}^{1}(G)$, and we
denote by $H^{-1/2}(\partial G)$ its topological dual.
Column vectors and corresponding vector-valued functions will be
denoted by boldface symbols, e.g., we denote the product space
$\big[L^2(G)\big]^N$ by $\mathbf{L}^{2}(G)$ and the respective $ N $-tuple
of Sobolev spaces by $\mathbf{H}^{1}(G) \defining
\big[H^{1}(G)\big]^N$.  Each $w \in H^{1}(G)$ has {\em gradient}
$ \grad w = \big(\frac{\partial w}{\partial x_{1}}, \ldots,
\frac{\partial w}{\partial x_{\scriptscriptstyle N}} \big) \in
\mathbf{L}^{2}(G) $, furthermore we understand it as a row vector.
We shall also use the space $\Hdiv(G)$ of vector functions $\w \in
{\mathbf L}^2(G)$ whose weak divergence $\div \w$ belongs to
$L^{2}(G)$.
Let $\n$ be the unit outward normal vector on $\partial G$.  If $\w$
is a vector function on $\partial G$, we denote its normal component
by $ \w_{\,\n} = \gamma(\w)\cdot\n  $, its normal projection by $ \w(\n)  = \w_{\,\n} \, \n $. The tangential component is $ \w(\tang) = \w - \w(\n) $. The notation $ \w_{N} , \w_{T}$ indicate respectively, the last component and the first $ N - 1 $ components of the vector function $ \w $ in the canonical basis. 
For the functions $ \w \in \Hdiv(G) $, there is a {\em normal trace} defined on
the boundary values, which will be denoted by $\w \cdot \n \in
H^{-1/2}(\partial G)$. For those $\w \in \mathbf{H}^{1}(G)$ this
agrees with $\gamma(\w) \cdot \n$.
Greek letters are used to denote general second-order tensors.  The
contraction of two tensors is given by $ \sigma \bcol \kappa = \sum_{i, \,
j}\, \sigma_{ij}\kappa_{ij} $.
For a tensor-valued function $ \kappa $ on $ \partial G $, we denote the
normal component (vector) by $ \kappa(\n) \defining \sum_{j} \,
\kappa_{ij}\, \n_{j} \in \R^N $, and its normal and tangential parts by
$\kappa(\n)\cdot\n = \kappa(\n)_{\,\n} \defining \sum_{i, \, j} \,
\kappa_{ij} \n_{i} \n_{j}$ and $ \kappa(\n)_{\tang}
\defining \kappa (\n) - \kappa_{\,\n} \,\n$, respectively.
For a vector function $\w \in \mathbf{H}^{1}(G)$, the tensor $(\grad \w)_{ij} =
\frac{\partial w_i}{\partial x_j}$ is the {\em gradient} of $\w$ and
$\big(\symgrad(\w)\big)_{i j} = \tfrac{1}{2} \big(\frac{\partial w_i}{\partial x_j} +
\frac{\partial w_j}{\partial x_i} \big)$ is the {\em symmetric
gradient}.
\newline

\noindent For a column vector $\x =
\big(x_{\,1},\,\ldots,\,x_{\scriptscriptstyle N -
1},\,x_{\scriptscriptstyle N}\big)\in \R^{N}$ we denote the
corresponding vector in $\R^{N-1}$ consisting of the first $N-1$
components by $\widetilde{\x}=
\big(x_{1},\,\ldots,\,x_{\scriptscriptstyle N - 1}\big)$, and we
identify $\R^{N-1} \times \{0\}$ with $\R^{N-1}$ by $\x =
(\widetilde{\x},x_{\scriptscriptstyle N})$.
The operators $\gradt$,
$\gradt\boldsymbol{\cdot}$ denote respectively the
$\R^{N-1}$-gradient and the $\R^{N-1}$-divergence in the first $ N -1 $-canonical directions, i.e. $\gradt \defining \big(\frac{\partial}{\partial
x_{1}}, \ldots, \frac{\partial}{\partial x_{\scriptscriptstyle N - 1}}
\big)$, $ \gradt\boldsymbol{\cdot} \defining \big(\frac{\partial}{\partial
x_{1}}, \ldots, \frac{\partial}{\partial x_{\scriptscriptstyle N - 1}}
\big) $; moreover, we regard these operators as row vectors. Finally, $ \grad^{t}, \gradt^{t} $ denote the same operators written as column vectors, i.e., the operators denoted as column vectors. 
\begin{remark}\label{Rem Notation Vector vs Function}
It shall be noticed that different notations have been chosen to indicate the first $ N - 1 $ components: we use $ \xthilde $ for a vector as variable $ \x $, while we use $ \w_{T} $ for a vector function $ \w $ (or the operator $ \gradt, \grad $ ). This difference in notation will ease keeping track of the involved variables and will not introduce confusion.
\end{remark}
%
%
%
%
%
\subsection*{Preliminary Results}
\noindent We  close this section recalling some classic results.
\begin{lemma}\label{Th Surjectiveness from Hdiv to H^1/2}
Let $G \subset \R^{N}$ be an open set with Lipschitz boundary, let
$\n$ be the unit outward normal vector on $\partial G$.  The normal
trace operator $\u \in \Hdiv(G) \mapsto \u \cdot \n \in
H^{-1/2}(\partial G)$ is defined by
\begin{equation}\label{Eq Normal Trace Definition}
\big\langle \u\cdot \n, \phi \big\rangle
_{\scriptscriptstyle H^{-1/2}(\partial G), \, H^{1/2}(\partial G)}
\defining 
\int_{G} \Big(\u\cdot \grad \phi + \div \u \, \phi \Big)\, dx ,
\quad \phi \in H^{1}(G).
\end{equation}
For any $g\in H^{-1/2}(\partial G)$ there exists $\u\in \Hdiv(G)$ such
that $\u\cdot \n = g$ on $\partial G$ and $\Vert \u \Vert_{\Hdiv
(G)}\leq K \Vert g \Vert_{H^{-1/2}(\partial G)}$, with $K$ depending
only on the domain $G$. In particular, if $g$ belongs to
$L^{2}(\partial G)$, the function $\u$ satisfies the estimate $\Vert \u
\Vert_{\Hdiv (G)}\leq K \Vert g \Vert_{0,\partial G}$.
\end{lemma}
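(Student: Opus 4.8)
\noindent The plan is to realize $\u$ explicitly as the gradient of the solution of an auxiliary coercive variational problem on $G$, so that both the normal-trace identity and the norm estimate can be read directly off that formulation; the $L^{2}$ statement will then follow from the continuous embedding $L^{2}(\partial G) \hookrightarrow H^{-1/2}(\partial G)$.

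\noindent First I would introduce the bilinear form $a(\phi, \psi) \defining \int_{G} \big( \grad \phi \cdot \grad \psi + \phi \, \psi \big)\, dx$ on $H^{1}(G) \times H^{1}(G)$, which is nothing but the $H^{1}(G)$ inner product and hence is bounded and coercive with constant $1$. Given $g \in H^{-1/2}(\partial G)$, the map $\psi \mapsto \big\langle g, \gamma(\psi) \big\rangle_{H^{-1/2}(\partial G),\, H^{1/2}(\partial G)}$ is a bounded linear functional on $H^{1}(G)$, of norm at most $\Vert \gamma \Vert \, \Vert g \Vert_{H^{-1/2}(\partial G)}$, since the trace $\gamma : H^{1}(G) \to H^{1/2}(\partial G)$ is continuous (here one uses that $G$ is Lipschitz; see \cite{Adams}, \cite{Temam79}). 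By the Riesz representation theorem there is then a unique $\phi \in H^{1}(G)$ with $a(\phi, \psi) = \big\langle g, \gamma(\psi) \big\rangle$ for all $\psi \in H^{1}(G)$, satisfying $\Vert \phi \Vert_{H^{1}(G)} \leq \Vert \gamma \Vert \, \Vert g \Vert_{H^{-1/2}(\partial G)}$.

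\noindent Next I would set $\u \defining \grad \phi$. Testing the identity against $\psi \in C_{0}^{\infty}(G)$ gives $\div \u = \phi$ in the sense of distributions, so $\div \u = \phi \in L^{2}(G)$ and $\u \in \Hdiv(G)$ with $\Vert \u \Vert_{\Hdiv(G)}^{2} = \Vert \grad \phi \Vert_{0,G}^{2} + \Vert \phi \Vert_{0,G}^{2} = \Vert \phi \Vert_{H^{1}(G)}^{2}$, which yields the claimed bound with $K = \Vert \gamma \Vert$ (depending only on $G$). To recover the normal trace one inserts $\u$ into \eqref{Eq Normal Trace Definition}: for every $\psi \in H^{1}(G)$,
\begin{equation*}
\big\langle \u \cdot \n, \psi \big\rangle_{H^{-1/2}(\partial G),\, H^{1/2}(\partial G)} = \int_{G} \big( \u \cdot \grad \psi + \div \u \, \psi \big)\, dx = \int_{G} \big( \grad \phi \cdot \grad \psi + \phi \, \psi \big)\, dx = a(\phi, \psi) = \big\langle g, \gamma(\psi) \big\rangle,
\end{equation*}
hence $\u \cdot \n = g$ in $H^{-1/2}(\partial G)$. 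Finally, if $g \in L^{2}(\partial G)$, then since $H^{1/2}(\partial G)$ embeds continuously and densely into $L^{2}(\partial G)$, dualizing gives $L^{2}(\partial G) \hookrightarrow H^{-1/2}(\partial G)$ with $\Vert g \Vert_{H^{-1/2}(\partial G)} \leq \Vert g \Vert_{0, \partial G}$, and the $L^{2}$-estimate drops out of the previous bound.

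\noindent There is no deep obstacle, the statement being classical (see e.g. \cite{Temam79}); the only points needing care are that on a Lipschitz domain the trace is well defined, bounded and onto $H^{1/2}(\partial G)$ — so that the duality pairing and the quotient-norm structure make sense — and the observation that the single variational identity for $\phi$ simultaneously encodes the equation $-\Delta \phi + \phi = 0$, the Neumann data, and the normal-trace relation, which is precisely why the choice $\u = \grad \phi$ does the job.
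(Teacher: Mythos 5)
Your proof is correct, and it is genuinely more informative than what the paper offers: the paper's entire ``proof'' is a citation to Lemma 20.2 of \cite{TartarSobolev}, with no argument given. Your construction --- solve $a(\phi,\psi)=\langle g,\gamma(\psi)\rangle$ with $a$ the $H^{1}(G)$ inner product, set $\u=\grad\phi$, read off $\div\u=\phi$ from interior test functions and the normal-trace identity from the variational equation itself --- is precisely the standard argument underlying that reference (it is essentially the proof in \cite{GirRav79} as well), and it is complete: coercivity with constant one makes $\Vert\u\Vert_{\Hdiv(G)}=\Vert\phi\Vert_{H^{1}(G)}\le\Vert\gamma\Vert\,\Vert g\Vert_{H^{-1/2}(\partial G)}$ immediate, and the surjectivity of $\gamma$ onto $H^{1/2}(\partial G)$ --- which you correctly identify as the point where the Lipschitz hypothesis enters --- is exactly what upgrades the identity quantified over $\psi\in H^{1}(G)$ to the equality $\u\cdot\n=g$ in $H^{-1/2}(\partial G)$. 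Two cosmetic remarks only: in your displayed chain the duality pairing should be written against $\gamma(\psi)$ rather than $\psi$, and the constant in the embedding $L^{2}(\partial G)\hookrightarrow H^{-1/2}(\partial G)$ need not be exactly $1$ under the quotient norm the paper places on $H^{1/2}(\partial G)$; since the lemma only asks for a constant depending on $G$, neither point affects the result.
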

\begin{proof}
See Lemma 20.2 in \cite{TartarSobolev}.
\qed
\end{proof}
Next we recall a central result to be used in this work 

%
\begin{theorem}\label{Th well posedeness mixed formulation classic}
Consider the problem a pair satisfying
\begin{equation}\label{Pblm operators abstrac system}
\begin{split}
(\x, \y)\in \X\times \Y: \quad 
\A \x + \B '\y  = F_{1}\quad \text{in}\; \X ' , 
\\
- \B \x  + \C \y = F_{2} \quad \text{in}\; \Y ' .
\end{split}
\end{equation}
Here $ \X, \Y $, $\X ', \Y '$ are Hilbert spaces and their corresponding topological duals, $F_{1}\in \X '$, $F_{2} \in \Y '$ and the operators $\A: \X\rightarrow \X'$, $\B:
\X\rightarrow \Y '$, $\C: \Y\rightarrow \Y '$ are linear and continuous. Assume the operators satisfy
\begin{enumerate}[(i)]
\item $\A$ is non-negative and $\X$-coercive on $\ker (\B)$,

\item $\B$ satisfies the inf-sup condition 
\begin{equation}\label{Ineq general inf-sup condition}
   \inf_{\y \, \in \, \Y} \sup_{\x \, \in \, \X}
   \frac{\vert  \B\x(\y) \vert }{\Vert \x\Vert_{\X}\, \Vert \y \Vert_{\Y}}  >0\,,
\end{equation}

\item $C$ is non-negative and symmetric.
\end{enumerate}
Then, for every $F_{1} \in \X '$ and $F_{2} \in \Y '$ the problem
\eqref{Pblm operators abstrac system}
has a unique solution $(\x, \y)\in \X \times \Y$, and it satisfies the
estimate
\begin{equation} \label{mix-est}
\Vert\x\Vert_{\X} + \Vert \y\Vert_{\Y} \leq c\, (\Vert F_{1}\Vert_{\X '} 
+ \Vert F_{2}\Vert_{\Y '})
\end{equation}
for a positive constant $c$ depending only on the preceding
assumptions on $\A$, $\B$, and $\C$.
\end{theorem}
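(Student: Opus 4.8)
The plan is to recast the pair of equations \eqref{Pblm operators abstrac system} as a single variational equation on the product Hilbert space $\mathcal{Z} \defining \X \times \Y$ and then invoke the generalized Lax--Milgram theorem (the Banach--Necas--Babuska theorem). Introduce the bounded bilinear form $\mathcal{L}\big((\x,\y),(\v,\mathbf{q})\big) \defining \langle \A\x,\v\rangle + \langle \B'\y,\v\rangle - \langle \B\x,\mathbf{q}\rangle + \langle \C\y,\mathbf{q}\rangle$ on $\mathcal{Z}\times\mathcal{Z}$ and the functional $\mathcal{F}(\v,\mathbf{q}) \defining \langle F_1,\v\rangle + \langle F_2,\mathbf{q}\rangle \in \mathcal{Z}'$, so that \eqref{Pblm operators abstrac system} is equivalent to: find $(\x,\y)\in\mathcal{Z}$ with $\mathcal{L}\big((\x,\y),(\v,\mathbf{q})\big) = \mathcal{F}(\v,\mathbf{q})$ for all $(\v,\mathbf{q})\in\mathcal{Z}$. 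What makes this reformulation useful is that the skew coupling cancels on the diagonal: $\mathcal{L}\big((\x,\y),(\x,\y)\big) = \langle\A\x,\x\rangle + \langle\C\y,\y\rangle \ge 0$ by (i) and (iii).

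Next I would record the structural consequences of the hypotheses. By (ii), $\B$ maps onto $\Y'$ and both $\B'$ and the restriction $\B|_{V^{\perp}}$ are bounded below by the inf--sup constant $\beta>0$, where $V\defining\ker(\B)$ and $\X = V\oplus V^{\perp}$ is the orthogonal decomposition; by (i), $\langle\A\cdot,\cdot\rangle$ is coercive on $V$ with a constant $\alpha>0$. Uniqueness for \eqref{Pblm operators abstrac system} is then almost free: with $F_1 = F_2 = 0$, testing the first equation with $\x$, the second with $\y$, and adding gives $\langle\A\x,\x\rangle + \langle\C\y,\y\rangle = 0$; since $\C$ is non-negative and symmetric, Cauchy--Schwarz for the form $\langle\C\cdot,\cdot\rangle$ forces $\C\y = 0$, hence $\B\x = 0$, i.e.\ $\x\in V$, hence $\x = 0$ by the $V$-coercivity, and finally $\B'\y = 0$ gives $\y = 0$. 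The transposed problem has the same block structure ($\A$ replaced by $\A'$, still $V$-coercive since $\langle\A'\v,\v\rangle = \langle\A\v,\v\rangle$; $\C$ unchanged; $\B$ with the same inf--sup), so an analogous computation shows $\mathcal{L}$ is non-degenerate in its second slot --- the second hypothesis of the generalized Lax--Milgram theorem.

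The crux is the first hypothesis: an inf--sup lower bound for $\mathcal{L}$ on $\mathcal{Z}$, i.e.\ a $\gamma>0$ with $\sup_{\Vert(\v,\mathbf{q})\Vert\le1}\mathcal{L}\big((\x,\y),(\v,\mathbf{q})\big)\ge\gamma\,\Vert(\x,\y)\Vert$ for all $(\x,\y)$. Normalizing $\Vert(\x,\y)\Vert = 1$ and splitting $\x = \x_0 + \x_1$ with $\x_0\in V$, $\x_1\in V^{\perp}$, I would test $\mathcal{L}$ against a weighted combination $a(\x,\y) + b(\x_0,\mathbf{0}) + c(\mathbf{0},-\mathbf{q}_1) + d(\v_1,\mathbf{0})$, where $\v_1\in V^{\perp}$ with $\Vert\v_1\Vert = 1$ realizes $\langle\B\v_1,\y\rangle\ge\tfrac{\beta}{2}\Vert\y\Vert$ and $\mathbf{q}_1\in\Y$ with $\Vert\mathbf{q}_1\Vert = 1$ realizes $\langle\B\x_1,\mathbf{q}_1\rangle\ge\tfrac{\beta}{2}\Vert\x_1\Vert$ (both available since $\B'$ and $\B|_{V^\perp}$ are bounded below). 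Term by term: the $a$-term is $\ge a\langle\C\y,\y\rangle$ (discard $\langle\A\x,\x\rangle\ge0$); the $b$-term equals $b\langle\A(\x_0+\x_1),\x_0\rangle \ge b\alpha\Vert\x_0\Vert^2 - b\Vert\A\Vert\,\Vert\x_1\Vert\,\Vert\x_0\Vert$; the $c$-term equals $c\big(\langle\B\x_1,\mathbf{q}_1\rangle - \langle\C\y,\mathbf{q}_1\rangle\big)\ge\tfrac{c\beta}{2}\Vert\x_1\Vert - c\Vert\C\Vert^{1/2}\langle\C\y,\y\rangle^{1/2}$; the $d$-term is $\ge\tfrac{d\beta}{2}\Vert\y\Vert - d\Vert\A\Vert\big(\Vert\x_0\Vert+\Vert\x_1\Vert\big)$. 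Young's inequality absorbs $\langle\C\y,\y\rangle^{1/2}$ into $a\langle\C\y,\y\rangle$ and the cross term $\Vert\x_1\Vert\Vert\x_0\Vert$ into $b\alpha\Vert\x_0\Vert^2$; using $\Vert\x_1\Vert^2\le\Vert\x_1\Vert$ and $\Vert\y\Vert^2\le\Vert\y\Vert$ on the unit sphere, what remains has the form $\mathcal{L}\ge\mu\big(\Vert\x_0\Vert^2+\Vert\x_1\Vert+\Vert\y\Vert\big) - \big(\kappa_1\Vert\C\Vert\,c^2/a + \kappa_2\Vert\A\Vert^2 d^2/(b\alpha)\big) \ge \mu - (\cdots)$. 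The weights are then calibrated hierarchically: take $c$ proportional to $b$ so that $\Vert\x_1\Vert$ gets a positive net coefficient, then $b$ large and finally $a$ larger still, which makes the two subtracted constants a small fraction of $\mu$ and leaves $\mathcal{L}\ge\gamma>0$, with $\gamma$ depending only on $\alpha,\beta,\Vert\A\Vert,\Vert\C\Vert$. This calibration --- neutralizing the indefinite $\C$-contribution and the $V$--$V^{\perp}$ coupling carried by $\A$ --- is the only delicate step; the rest is routine bounded-bilinear-form bookkeeping.

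With the two hypotheses verified, the generalized Lax--Milgram theorem yields a unique $(\x,\y)\in\mathcal{Z}$ solving \eqref{Pblm operators abstrac system} together with $\Vert\x\Vert_{\X} + \Vert\y\Vert_{\Y}\le c\,\big(\Vert F_1\Vert_{\X'} + \Vert F_2\Vert_{\Y'}\big)$ with $c$ of the order of $\gamma^{-1}$, which is \eqref{mix-est}; since $\gamma$ was built only from $\alpha,\beta,\Vert\A\Vert,\Vert\C\Vert$, the constant depends only on the stated assumptions. Conceptually this is the generalized saddle-point (Brezzi) lemma, so one could alternatively quote it from a standard reference on mixed variational formulations, but the self-contained route above uses nothing beyond hypotheses (i)--(iii).
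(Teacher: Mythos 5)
Your proof is correct, but it takes a genuinely different route from the paper, which offers no argument at all for this theorem: its proof is the single line ``See Section 4 in \cite{GirRav79}'', where the statement appears as part of the classical Babuska--Brezzi theory of mixed problems. The textbook argument behind that citation is blockwise: the inf-sup condition makes $\B$ restricted to $\ker(\B)^{\perp}$ an isomorphism onto $\Y'$, so one first fixes the $\ker(\B)^{\perp}$-component of $\x$ from the constraint equation, then solves a coercive reduced problem on $\ker(\B)$ by Lax--Milgram, and finally recovers $\y$ from the fact that $\B'$ is bounded below, with the non-negative symmetric $\C$ handled as a perturbation of this scheme. You instead fold the system into a single bilinear form on $\X\times\Y$, exploit the cancellation of the skew coupling on the diagonal, and verify the Banach--Necas--Babuska (generalized Lax--Milgram) conditions directly; the only substantive step is the global inf-sup bound via your four-term composite test function, and your hierarchical calibration of the weights is sound: the $\C$-cross term is absorbed into $a\langle\C\y,\y\rangle$ at the cost of a constant of order $c^{2}\Vert\C\Vert/a$ that a large $a$ suppresses, the coupling between $\ker(\B)$ and its orthogonal complement carried by $\A$ is absorbed into the coercive term on $\ker(\B)$, and the adjoint non-degeneracy does follow from your uniqueness computation applied to the transposed system, whose block structure is unchanged. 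The monolithic approach buys a single uniform argument valid for $\C\neq 0$ with the stability constant read off as the reciprocal of your inf-sup constant $\gamma$; the citation buys brevity and, through the blockwise proof, a more explicit dependence of the constant on the coercivity constant, the inf-sup constant, and the operator norms. Either way the theorem is established.
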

\begin{proof}
See Section 4 in \cite{GirRav79}. 
\qed
\end{proof}
%
%
%
\section{Geometric Setting and Formulation of the Problem}
\noindent In this section we introduce the Darcy-Stokes coupled system analogous to the one presented in \cite{ShowMor17}, for the case when the interface is curved. We begin with the geometric setting
%
\subsection{Geometric Setting and Change of Coordinates}\label{Sec Geometric Setting}
\noindent We describe here the geometry of the domains to be used in the present
work; see Figure \ref{Fig Stream Lines} (a) for the case $N = 2$. The disjoint bounded domains $\Omega_{1}$ and $\Omega_{2}^{\epsilon}$
in $\R^{N}$ share the common {\em interface}, $\Gamma \defining
\partial \Omega_{1} \cap \partial \Omega_{2}^{\epsilon} \subseteq \R^{N}$, and we define $\Omega^\epsilon \defining
\Omega_1 \cup \Gamma \cup \Omega_2^\epsilon$. The domain
$\Omega_{1}$ is the porous medium, and $\Omega_{2}^{\epsilon}$ is the
free fluid region. For simplicity we have assumed that the domain $\Omega_{2}^{\epsilon}$ is a cylinder defined by the interface $ \Gamma $ and a small height $ \epsilon > 0 $. It follows that the interface must verify specific requirements for a successful analysis
\begin{hypothesis}\label{Hyp Interface Geometric Conditions}
There exists $ G_{0}, G $ bounded open connected domains $ G_{0}, G\subset \R^{N-1} $, such that $ \cl (G)\subset G_0 $ and a $ C^{2}(G_0) $ function $ \zeta:G_0 \rightarrow \R $, such that the interface $ \Gamma $ can be described by
\begin{equation}\label{Def interface}
\Gamma\defining
\big\{\big(\xthilde,\,\zeta\,(\xthilde) \big):\,
\xthilde\in G\,\big\} ,
\end{equation}
i.e., $ \Gamma $ is a $ N-1 $ manifold in $ \R^{N} $. The domain $ \Omega_{2}^{\epsilon} $ is described by
\begin{equation}\label{omega 2 epsilon}
\Omega_2^{\epsilon}\defining
\big\{\big(\xthilde,\,y\big):
\zeta\,(\xthilde)< y< \zeta\,(\xthilde)+\epsilon,\,
\xthilde\in G \big\} ,
\end{equation}
\end{hypothesis}
\begin{remark}\label{Rem Interface Geometric Conditions}
\begin{enumerate}[(i)]
\item Observe that the domain $ G $ is the orthogonal projection of the open surface $ \Gamma\subseteq \R^{N} $ into $ \R^{N-1} $. 

\item Notice that due to the properties of $ \zeta $ it must hold that if $ \n = \n\pxthilde $ is the upwards unitary vector, orthogonal to the surface $ \Gamma $ then
\begin{equation}\label{Eq constraint on the normal vector}
\delta\defining\inf
\big\{\n\pxthilde\cdot\eversor_{N}:\xthilde\in G\big\}>0 .
\end{equation}
\end{enumerate}
\noindent For simplicity of notation in the following we write
\begin{enumerate}[(i)]
\setcounter{enumi}{2}
\item 
\begin{equation}\label{Def epsilon top surface}
\Gamma+\epsilon\defining
\big\{\big(\xthilde,\,\zeta\,(\xthilde)+\epsilon\big):\,
\xthilde\in G\,\big\} ,
\end{equation}
\item 
\begin{align}\label{Def Reference Domain Notational Simplicity}
& \Omega_{2} \defining \Omega_{2}^{1}, &
& \Omega \defining \Omega^{1} .
\end{align}
\end{enumerate}
\end{remark}
For the asymptotic analysis of the coupled system, a domain of reference $ \Omega $ will have to be fixed, see Figure \ref{Fig Stream Lines} (b). Therefore we adopt a bijection between domains and account for the changes in the differential operators.
\begin{definition}\label{Def Map to Domain of Reference}
Let $ \varphi: \Omega_{2}^{\epsilon} \rightarrow \Omega$ be the change of variables defined by
\begin{equation}\label{Eq Map to Domain of Reference}
\varphi(y_{1}, \ldots, y_{N -1}, y_{N}) \defining
\begin{Bmatrix}
y_{1} \\
\vdots\\
y_{N -1} \\
\epsilon^{-1} \big(y_{N} - \zeta(y_{1}, \ldots, y_{N-1})\big)
+ \zeta\big(y_{1}, \ldots, y_{N-1}\big) 
\end{Bmatrix} ,
\end{equation}
%
with $ \y = \left(y_{1},\,\ldots ,\,y_{N-1},\,y_{N} \right) \in \Omega_{2}^{\epsilon} $. Also, denote $ \x = (x_{1}, \ldots, x_{N - 1},  z) \defining \varphi(\y) $, i.e.
\begin{align}\label{Def Name of Variables}
& \x = (x_{1}, \ldots, x_{N - 1},  z) \in \Omega_{2}, &
& \x \cdot \eversor_{\ell} = \varphi(\y)\cdot \eversor_{\ell} .
\end{align}
\end{definition}
\begin{remark}\label{Rem Domains Bijective Map}
Observe that $ \varphi: \Omega_{2}^{\epsilon} \rightarrow \Omega_{2} $ is a bijective map, see Figure \ref{Fig Stream Lines} (b).
\end{remark}
%
%
\subsubsection*{Gradient Operator} 
\noindent Denote by $ \prescript{\y}{}{\grad} , \prescript{\x}{}{\grad} $ the gradient operators with respect to the variables $ \y $ and $ \x $ respectively. Due to Definition \ref{Def Reference Domain Notational Simplicity} above, a direct computation shows that these operators satisfy the relationship
\begin{equation}\label{Eq gradient structure for scalar function}
\prescript{\y}{}{\grad}^{t} = 
\begin{Bmatrix}
\prescript{\y}{}{\gradt}^{\!\!t}\\[3pt]
\frac{\partial}{\partial y_{N}}
\end{Bmatrix} =
\begin{bmatrix}
I & (1 - \epsilon^{-1}) \prescript{\x}{}{\gradt}^{t} \zeta\\[3pt]
\bm{0} & \epsilon^{-1}
\end{bmatrix}
\begin{Bmatrix}
\prescript{\x}{}\gradt^{t} \\[3pt]
\partial_{z}
\end{Bmatrix} .
\end{equation}
In the block matrix notation above, it is understood that $ I $ is the identity matrix in $ \R^{(N-1)\times(N-1)} $, $ \gradt \zeta, \bm{0} $ are vectors in $ \R^{N -1} $ and $ \partial_{z} = \frac{\partial}{\partial z} $.
%
In order to apply these changes to the gradient of a vector function $ \w $, we recall the matrix notation
\begin{equation}\label{Eq gradient structure}
\prescript{\y}{}\grad\,\w 
=
\begin{bmatrix}
\prescript{\y}{} \grad w_{\scriptscriptstyle 1}\\
\vdots \\
\prescript{\y}{}\grad w_{\scriptscriptstyle N}
\end{bmatrix}
%
= \begin{bmatrix}
\prescript{\x}{}\gradt w_{1} + (1 - \epsilon^{-1})\partial_{z} w_{1} \prescript{\x}{}\gradt \zeta
& \epsilon^{-1}\,\partial_{z} w_{1}\, \\
\vdots & \vdots \\
\prescript{\x}{}\gradt w_{N} + (1 - \epsilon^{-1})\partial_{z} w_{N}\prescript{\x}{}\gradt \zeta
& \epsilon^{-1}\, \partial_{z} w_{N}\,
\end{bmatrix} ,
\end{equation}
reordering we get
\begin{equation}\label{Eq reorganized gradient structure}
\prescript{\y}{}\grad\w \left(\xthilde,\,x_{\scriptscriptstyle N}\right)
= 
\begin{bmatrix}
\prescript{\x}{}\D^{\epsilon}\w &
\dfrac{1}{\epsilon}\, \partial_{z}\,\w
\end{bmatrix} .
\end{equation}
%
%
Here, the operator $ \prescript{\x}{}\deps $ is defined by
\begin{equation}\label{Def operator D epsilon}
\prescript{\x}{} \deps\w \defining 
\prescript{\x}{}\gradt \w 
+\Big(1-\frac{1}{\epsilon}\Big)\partial_{z}\w \prescript{\x}{}\gradt^{t}\zeta,
\end{equation}
i.e., $ \prescript{\x}{}\deps\,\w\in \R^{N\times(N-1)} $ and it is introduced to have a more efficient notation. In the next section we address the interface conditions. 
%
%
\subsubsection*{Divergence Operator} 
\noindent Observing the diagonal of the matrix in \eqref{Eq gradient structure} we have
\begin{equation}\label{Eq structure divergence canonical basis}
%
\prescript{\y}{}\div
\, \w\,(\xthilde,x_{\scriptscriptstyle N})
=\Big(\prescript{\x}{}\divt\,\w_{\scriptscriptstyle T}
+\Big(1- \frac{1}{\epsilon}\Big) \partial_{z}\,
\w_{\scriptscriptstyle T}\cdot \prescript{\x}{}\divt\,\zeta\Big)(\xthilde,z) 
+ \frac{1}{\epsilon}\,\partial_{z}\w_{\scriptscriptstyle N}\left(\xthilde,\,z\right) \\
%
%
\end{equation}
%
%
%
%
\begin{remark}\label{Rem Prescript}
The prescript indexes $ \y, \x $ written on the operators above were used only to derive the relation between them 
, however they will be dropped once the context is clear.
\end{remark}
%
%
\subsubsection*{Local vs Global Vector Basis}
\noindent It shall be seen later on, that the velocities of the channel need to be expressed in terms of an orthonormal basis $ \B $, such that the normal vector $ \n $ belongs to $ \B $ and the remaining vectors are locally tangent to the interface $ \Gamma $. Since $ \zeta: G \rightarrow \R $ is a $ C^{2} $ function it follows that $ \xthilde\mapsto \n\pxthilde $ is at least $ C^{1} $. 
\begin{definition}\label{Def Local System of Coordinates}
Let $ \B_{0} \defining  \big\{ \eversor_{1}, \ldots, \eversor_{N-1}, \eversor_{N} \big\} $ be the standard canonical basis in $ \R^{N} $.
For any $ \xthilde \in G $ let $ \B = \B(\xthilde)  \defining \big\{\nuversor_{1} , \ldots, \nuversor_{N - 1}, \n \big\} $ be an orthonormal basis in $ \R^{N} $. Define the linear map $ U(\xthilde) : \R^{N} \rightarrow \R^{N} $ by
\begin{align}
& U(\xthilde) \nuversor_{i} \defining \eversor_{i}, \text { for } i = 1, \ldots, N-1, &
&  U(\xthilde) \n \defining \eversor_{N}  .
\end{align}
We say the map $ \widetilde{\x} \mapsto U(\widetilde{\x}) $ is a \textbf{stream line localizer} if it is of class $ C^{1} $. In the sequel we write it with the following block matrix notation  
\begin{equation}\label{Eq blocks of the rotational matrix}
U \pxthilde \defining
\begin{bmatrix}
\UxTtang & \UxTnormal\\
\UxNtang & \UxNnormal
\end{bmatrix} .
\end{equation}
Here, the indexes $ T $ and $ N $ stands for the first $ N - 1 $ components and the last component of the vector field. The indexes $ \tang $ and $ \n $ indicate the tangent and normal directions to the interface $ \Gamma $.
\end{definition}
\begin{remark}\label{Rem Stream Line Localizer}
\begin{enumerate}[(i)]
\item Since $ \zeta \in C^{2}(G) $ and bounded, it is clear that for each $ \xthilde \in G $ basis $ \B  = \big\{\nuversor_{1}, \ldots, \nuversor_{N-1}, \n \big\} $ can be chosen, so that $ \xthilde \mapsto U(\xthilde) $ is $ C^{1} $. In the following it will be assumed that $ U $ is a stream line localizer.

\item Notice that by definition $ U(\xthilde) $ is an orthogonal matrix for all $ \xthilde \in G $. 
\end{enumerate}
\end{remark}
Next, we express the velocities in the local basis $ \wtwo $
in terms of the normal and tangential components, using the following relations
\begin{subequations}\label{Eq definition of normal and tangential velocity}
\begin{equation}\label{Eq definition of normal velocity}
\wnormalx = \wtwo\cdot\n \pxthilde ,
\end{equation}
\begin{equation}\label{Eq definition of tangential velocity}
\wtangx = 
\begin{Bmatrix}
\wtwo\cdot \nuversor_{1}(\xthilde) \\
\vdots\\
\wtwo\cdot\nuversor_{N - 1}(\xthilde )
\end{Bmatrix} .
\end{equation}
\end{subequations}
Clearly, the relationship between velocities is given by
\begin{equation}\label{Eq local and global velocities}
\wtwo \big(\xthilde, x_{\scriptscriptstyle N}\big)
= U \pxthilde 
\begin{Bmatrix}
\wtangx\\[3pt]
\wnormalx
\end{Bmatrix}
\big(\xthilde, x_{\scriptscriptstyle N}\big)\\
= \begin{bmatrix}
\UxTtang & \UxTnormal\\[3pt]
\UxNtang & \UxNnormal
\end{bmatrix}
\begin{Bmatrix}
\wtangx\\[3pt]
\wnormalx
\end{Bmatrix}
\big(\xthilde, x_{\scriptscriptstyle N}\big) .
\end{equation}
\begin{remark}
We stress the following observations
\begin{enumerate}[(i)]
\item The procedure above does not modify the dependence of the variables, only the way velocity field are expressed as linear combinations of a convenient (stream line) orthonormal basis.

\item The fact that $ U $ is a smooth function allows to claim that $ \wtang\in \big[H^1(\Omega_2)\big]^{N-1} $ and $ \wnormal\in H^1(\Omega_2) $. 

\item In order to keep notation as light as possible the dependence of the matrix $ U $ as well as the normal and tangential directions $ \n, \tang $ will be omitted whenever is not necessary to show it. 

\item Notice that given any two flow fields $ \utwo, \wtwo $ the following isometric identities hold
\begin{equation}\label{Eqn Isometric Equalities}
\begin{split}
\utwo_{\tang} \cdot \wtwo_{\tang}
& =
\utwo(\tang) \cdot \wtwo(\tang) ,\\
\utwo_{\n} \wtwo_{\n}
& =
\utwo(\n) \cdot \wtwo(\n)  ,
\\
\utwo \cdot \wtwo & = 
\utwo(\tang) \cdot \wtwo(\tang) + 
\utwo(\n) \cdot \wtwo(\n) =
\utwo_{\tang} \cdot \wtwo_{\tang} +
\utwo_{\n} \wtwo_{\n} .
\end{split}
\end{equation}
%

\end{enumerate}
\end{remark}
\begin{proposition}\label{Th Orthogonal Basis and Spaces Isometry}
Let $ \wtwo \in \mathbf{H}^{1}(\Omega_{2}) $ and let $ \wnormal, \wtang $ be as defined in \eqref{Eq definition of normal and tangential velocity}, then 
\begin{enumerate}[(i)]
\item 
\begin{equation}\label{Eq partial z velocities}
\partial_z \wtwo \big(\xthilde, x_{\scriptscriptstyle N}\big)
= U \pxthilde 
\begin{Bmatrix}
\partial_z\,\wtangx\\[3pt]
\partial_z\,\wnormalx
\end{Bmatrix}
\big(\xthilde, x_{\scriptscriptstyle N}\big) .
\end{equation}
\item 
\begin{equation}\label{Eq norms preserved}
\big\Vert \partial_z \wtwo \big\Vert ^{2}_{0,\Omega_2} =
\big\Vert \partial_z \wtan \big\Vert ^{2}_{0,\Omega_2}
+
\big\Vert \partial_z \wnorm \big\Vert ^{2}_{0,\Omega_2}\\
=
\big\Vert \partial_z \wtang \big\Vert ^{2}_{0,\Omega_2}
+
\big\Vert \partial_z \wnormal \big\Vert ^{2}_{0,\Omega_2} .
\end{equation}
\end{enumerate}
\end{proposition}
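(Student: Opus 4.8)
The plan is to obtain item (i) as a direct consequence of the linearity of the map $U(\xthilde)$ together with the fact that $\partial_z$ differentiates only in the $z$-variable, while $U$ depends only on $\xthilde$; item (ii) then follows by squaring, integrating, and invoking that $U(\xthilde)$ is orthogonal (Remark \ref{Rem Stream Line Localizer}(ii)).

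First I would establish \eqref{Eq partial z velocities}. Starting from the defining identity \eqref{Eq local and global velocities}, namely
\begin{equation*}
\wtwo\big(\xthilde, x_{\scriptscriptstyle N}\big) = U\pxthilde \begin{Bmatrix} \wtangx \\ \wnormalx \end{Bmatrix}\big(\xthilde, x_{\scriptscriptstyle N}\big),
\end{equation*}
I apply $\partial_z = \partial/\partial x_{\scriptscriptstyle N}$ to both sides (recall $z$ and $x_{\scriptscriptstyle N}$ name the same last coordinate on $\Omega_2$, cf. Definition \ref{Def Map to Domain of Reference}). Since the matrix entries of $U\pxthilde$ do not depend on the last variable, the product rule leaves $U\pxthilde$ untouched and passes $\partial_z$ through to the column vector $(\wtangx, \wnormalx)^{t}$, componentwise. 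This is exactly the claimed formula. The regularity needed to make this differentiation legitimate in the weak ($H^1$) sense is supplied by the hypothesis that $U$ is a stream line localizer (of class $C^1$) and $\zeta \in C^2(G)$, so that $\wtang \in \big[H^1(\Omega_2)\big]^{N-1}$ and $\wnormal \in H^1(\Omega_2)$ as already noted in the remark following \eqref{Eq local and global velocities}; one may argue by density, differentiating smooth approximations and passing to the limit.

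Next I would prove \eqref{Eq norms preserved}. Fix $\xthilde \in G$. By \eqref{Eq partial z velocities}, for a.e. $(\xthilde, x_{\scriptscriptstyle N})$ the vector $\partial_z \wtwo$ is the image under the orthogonal matrix $U\pxthilde$ of the vector $\big(\partial_z \wtangx,\, \partial_z \wnormalx\big)^{t}$; hence, pointwise,
\begin{equation*}
\big|\partial_z \wtwo\big(\xthilde, x_{\scriptscriptstyle N}\big)\big|^2 = \big|\partial_z \wtangx\big(\xthilde, x_{\scriptscriptstyle N}\big)\big|^2 + \big|\partial_z \wnormalx\big(\xthilde, x_{\scriptscriptstyle N}\big)\big|^2,
\end{equation*}
using $|U\pxthilde\,\xi| = |\xi|$ for every $\xi \in \R^N$. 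Integrating this identity over $\Omega_2$ gives the first equality in \eqref{Eq norms preserved}. The second equality is then just the isometric identity \eqref{Eqn Isometric Equalities} applied to the derivative fields (equivalently, the observation that $\|\partial_z \wtan\|_{0,\Omega_2} = \|\partial_z \wtang\|_{0,\Omega_2}$ and $\|\partial_z \wnorm\|_{0,\Omega_2} = \|\partial_z \wnormal\|_{0,\Omega_2}$, since passing between the canonical split $\w_T, \w_N$ and the local split $\w(\tang), \w(\n)$ is, at each point, another orthogonal change of frame).

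The only point requiring genuine care — the \emph{main obstacle} — is justifying that $\partial_z$ commutes with multiplication by $U\pxthilde$ at the level of weak derivatives, i.e. that the chain/product rule used in the first step is valid for $\wtwo \in \mathbf{H}^1(\Omega_2)$ rather than for smooth fields. This is handled by the $C^1$-regularity of the stream line localizer: since $U$ and $U^{-1} = U^{t}$ are bounded with bounded derivatives and depend on $\xthilde$ only, the maps $\wtwo \mapsto (\wtang, \wnormal)$ and its inverse are bounded linear operators on the respective $H^1$ spaces, and $\partial_z$ acts on the local components exactly as on the global ones because $\partial_z$ annihilates the $\xthilde$-dependent coefficients. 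Everything else is a routine pointwise orthogonality computation followed by integration, so no further difficulty arises.
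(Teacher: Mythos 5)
Your proposal is correct and follows essentially the same route as the paper: part (i) rests on the observation that $U\pxthilde$ is independent of $z$, and part (ii) on the pointwise isometry of the orthogonal matrix $U\pxthilde$ followed by integration over $\Omega_{2}$, with the first equality in \eqref{Eq norms preserved} coming from the canonical $T$/$N$ splitting. Your additional care about justifying the product rule at the level of weak derivatives via the $C^{1}$ regularity of the stream line localizer is a harmless elaboration of what the paper leaves implicit.
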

\begin{proof}
\begin{enumerate}[(i)]
\item It suffices to observe that the orthogonal matrix $ U $ defined in \eqref{Eq local and global velocities} is independent from $ z $.

\item Due to \eqref{Eq partial z velocities} we have
\begin{equation*}
\begin{split}
\big\vert \partial_z \wtwo \left(\xthilde, x_{\scriptscriptstyle N}\right)\big\vert ^{2} 
& =
\partial_z \wtwo \left(\xthilde, x_{\scriptscriptstyle N}\right)\cdot
\partial_z \wtwo \left(\xthilde, x_{\scriptscriptstyle N}\right) \\
& = U \pxthilde
\begin{Bmatrix}
\partial_z\,\wtangx\\
\partial_z\,\wnormalx
\end{Bmatrix}
\left(\xthilde, x_{\scriptscriptstyle N}\right)
\cdot
U \pxthilde 
\begin{Bmatrix}
\partial_z\,\wtangx\\
\partial_z\,\wnormalx
\end{Bmatrix}
\left(\xthilde, x_{\scriptscriptstyle N}\right) 
\\
& = \bigg\vert
\begin{Bmatrix}
\partial_z\,\wtangx\\
\partial_z\,\wnormalx
\end{Bmatrix}
\bigg\vert^{2} .
\end{split} 
\end{equation*}
The last equality holds true since the matrix $ U(\xthilde) $ is orthogonal at each point $ \xthilde \in G $, therefore it is an isometry in the Hilbert space $ \big(\R^{N}, \cdot \big) $. Recalling that $ \big\vert \partial_z \wtan \big(\xthilde, x_{\scriptscriptstyle N}\big)\big\vert ^{2}
+
\big\vert \partial_z \wnorm \big(\xthilde, x_{\scriptscriptstyle N}\big)\big\vert ^{2}
 =
\big\vert \partial_z \wtwo \big(\xthilde, x_{\scriptscriptstyle N}\big)\big\vert ^{2} $ for all $ \x = (\xthilde, x_{\scriptscriptstyle N}) $, the result follows.
%
\qed
\end{enumerate}
\end{proof}
%
%
%
\subsection{Interface Conditions and the Strong Form}
\noindent The interface conditions need to account for stress and mass balance. We start with the stress, to that end we decompose it in the tangential and normal components, the former is handled by the {\em Beavers-Joseph-Saffman}
\eqref{Beavers - Joseph -Saffman condition} condition and the latter by the classical {\em
Robin} boundary condition \eqref{Eq Interface Normal Stress Balance}, this gives
\begin{subequations}\label{Eq Interface Stress Balance}
\begin{equation}\label{Beavers - Joseph -Saffman condition}
\sigma^{2}(\n)_{\tang} =  
\epsilon^{2}\,\beta \, \sqrt{\Q}\, \vtwo(\tang) \,,
\end{equation}
\begin{equation}\label{Eq Interface Normal Stress Balance}
\sigma^{2}(\n)_{\n} - \ptwo + \pone 
= \alpha\, \vone \cdot \n   \text{ on } \Gamma .
\end{equation}
\end{subequations}
In the expression \eqref{Beavers - Joseph -Saffman condition} above,
$\epsilon^{2}$ is a scaling factor destined to balance out the
geometric singularity introduced by the thinness of the channel.  In
addition, the coefficient $\alpha \ge 0$ in \eqref{Eq Interface Normal
Stress Balance} is the {\em fluid entry resistance}. 
\newline
\newline
\noindent Next, recall that the stress satisfies $ \sigma^{2} = 2\,\epsilon\,\E\, \symgrad(\vtwo) $ (where the scale $ \epsilon $ is introduced according to the thinness of the channel) and that $\grad\cdot \vtwo = 0$ (since the system is conservative); then we have
\begin{equation*}\label{Eq stress divergence to laplacian velocity}
\grad\cdot\sigma^{2} = 
\grad\cdot \big[2\,\epsilon\,\E\,\symgrad\big(\vtwo\big) \big]
= \epsilon\,\E \grad\cdot\grad\vtwo .
\end{equation*}
Replacing in the equations \eqref{Eq Interface Stress Balance} we have the following set of interface conditions
\begin{subequations}\label{EQ interface conditions}
\begin{equation}\label{Beavers - Joseph condition}
\epsilon\,\mu\, 
\Big(\frac{\partial \,\vtwo}{\partial\, \n} - 
\big( \frac{\partial \, \vtwo}{\partial\, \n} \cdot \n\big)\n \Big)
= \epsilon^{2} \beta \,  \sqrt{\Q}\, \vtwo(\tang) \,,
\end{equation}
\begin{equation}  \label{scaled interface normal stress}
\epsilon\,\mu\Big(\frac{\partial \,\vtwo}{\partial\,\n}\cdot\n\Big)-p^{2}+p^{1}
=\alpha\,\vone\cdot\n \text{ on } \Gamma .
\end{equation}
\begin{equation}\label{admissability constraint}
\vone \cdot \n = \vtwo \cdot \n  \text{ on } \Gamma .
\end{equation}
\end{subequations}
The condition \eqref{admissability constraint} states the fluid flow (or mass) balance.
\newline
\newline
With the previous considerations, the Darcy-Stokes coupled system in terms of velocity $ \v $ and pressure $ p $ is given by 
\begin{subequations}\label{Eq Darcy-Stokes System} 
\begin{equation}\label{Eq Darcy Mass Conservation}
\div \vone = h_1 \,,
\end{equation}
\begin{equation}\label{darcy}
\Q \,\vone +  \grad \pone = \0 \,, \quad \text{in }  \Omega_{1} .
\end{equation}
\begin{equation}\label{Eq Divergence free condition}
\grad\cdot \vtwo = 0 ,
\end{equation}
\begin{equation}\label{Momentum fluid}
-\grad\cdot 2\epsilon \mu \symgrad (\vtwo)
+\grad \ptwo = \f_{2} .
\end{equation}
\end{subequations}
Here, equations \eqref{Eq Darcy Mass Conservation}, \eqref{darcy} correspond to the Darcy flow filtration through the porous medium, while equations \eqref{Eq Divergence free condition} and \eqref{Momentum fluid} stand for the Stokes free flow. Finally, we adopt the following boundary conditions
\begin{subequations}\label{BC Top boundary condition}
\begin{equation}\label{BC Drained}
\pone = 0 \, \quad \text{on }\, \partial \Omega_{1} - \Gamma .
\end{equation}
\begin{equation}\label{BC Null Flux}
\vtwo=0 \, \quad \text{on }\, 
\partial \Omega_{2}^{\epsilon} - \big(\Gamma + \epsilon \big) .
\end{equation}
\begin{equation}\label{BC tangential boundary condition}
\frac{\partial \,\vtwo}{\partial\,\n}
- \Big(\frac{\partial \,\vtwo}{\partial\,\n}\cdot\n\Big)\n
= 0\quad\text{on}\;\Gamma+\epsilon ,
\end{equation}
\begin{equation}\label{BC normal boundary condition}
\vtwo\cdot\n =
\vnormal = 0 \quad\text{on}\;\Gamma+\epsilon .
\end{equation}
\end{subequations}
The system of equations \eqref{Eq Darcy-Stokes System}, \eqref{BC Top boundary condition} and \eqref{EQ interface conditions} constitute the strong form of the Darcy-Stokes coupled system.
\begin{remark}\label{Rem Scaling Coments}
\begin{enumerate}[(i)]
\item For a detailed exposition on the system's adopted scaling namely, the
fluid stress tensor $ \sigma^{2} = 2\,\epsilon\,\E\, \symgrad(\vtwo) $ and the
Beavers-Joseph-Saffman condition \eqref{Beavers - Joseph -Saffman
condition}, together with the \textbf{formal} asymptotic analysis we refer to
\cite{Morales1}.

\item A deep discussion on the role of each physical variable and parameter in equations \eqref{Eq Darcy-Stokes System} as well as the meaning of the boundary conditions \eqref{BC Top boundary condition}, can be found in Sections 1.2, 1.3 and 1.4 in \cite{ShowMor17}.
\end{enumerate}
\end{remark}

\subsection{Weak Variational Formulation and a Reference Domain}\label{sec-wellposed}
%
\noindent In this section we present the weak variational formulation of the problem defined by the system of equations \eqref{Eq Darcy-Stokes System}, \eqref{BC Top boundary condition} and \eqref{EQ interface conditions}, on the domain $\Omega^\epsilon$, next, we rescale
$\Omega_2^\epsilon$ to get a uniform domain of reference. We begin defining the function spaces where the problem is modeled
\begin{definition}\label{Def Function Spaces}
Let $ \Omega, \Omega_{1}, \Omega_{2}^{\epsilon}, \Gamma $ be as introduced in Section \ref{Sec Geometric Setting}; in particular $ \Omega_{2} $ and $ \Gamma $ satisfy Hypothesis \ref{Hyp Interface Geometric Conditions}. Define the spaces
\begin{subequations}\label{Eq Function Spaces}
\begin{equation}\label{Def Velocities Two}
\X_{2}^{\epsilon}\defining
\big\{\v\in\H1bold(\Omega_{2}^{\epsilon}):\v=0 \text{ on }  
\partial \Omega_{2}^{\epsilon} - \big(\Gamma + \epsilon \big), \; 
\v\cdot\n = 0\;\text{on}\,\Gamma+\epsilon\, \big\} ,
\end{equation}
\begin{equation}\label{Def Velocities Match}
\X^{\epsilon} \defining \big\{[\,\v^{1},\v^{2}\,]\in
\Hdiv(\Omega_{1}^{\epsilon})\times\X_{2}^{\epsilon}:
\v^{1}\cdot\n=\v^{2}\cdot\n
\;\text{on}\,\Gamma\big\} 
=
\big\{ \v \in \Hdiv(\Omega^\epsilon): \v^{2} \in \X_2^\epsilon \big\},
\end{equation}
\begin{equation}\label{Def Pressures}
\Y^{\epsilon}\defining
L^{2}(\Omega^{\epsilon}) ,
\end{equation}
\end{subequations}
endowed with their respective natural norms. Moreover, for $ \epsilon  = 1 $ we simply write $ \X $, $ \X_{2} $ and $ \Y $.
\end{definition}
In order to attain well-posedness of the problem, the following hypothesis is adopted.
\begin{hypothesis}\label{Hyp Bounds on the Coefficients}
It will be assumed that $\mu > 0$ and the coefficients $\beta$ and
$\alpha$ are nonnegative and bounded almost everywhere. Moreover, the
tensor $\Q$ is elliptic, i.e., there exists a $C_{\Q}> 0$ such that $(\Q
\,\x)\cdot \x \geq C_{\Q} \Vert \x\Vert^{2}$ for all $\x\in \R^{N}$.
\end{hypothesis}
\begin{theorem}\label{Th Weak Variational Formulation}
Consider the boundary-value problem defined by the equations \eqref{Eq Darcy-Stokes System}, the interface coupling conditions \eqref{EQ interface conditions} and the boundary conditions \eqref{BC Top
boundary condition} then
\begin{enumerate}[(i)]
\item A weak variational formulation of the problem is given by
\begin{subequations}\label{scaled problem}
\begin{flushleft}
$\big[\,\v^{\epsilon},p^{\epsilon}\,\big]\in\X^{\epsilon}\times\Y^{\epsilon}: $
\end{flushleft}
\vspace{-0.4cm}
\begin{align}
\label{scaled problem 1}
\int_{\Omega_1} \big( \Q \,\v^{1,\,\epsilon} \cdot \w^1 & 
-
\pepsone\,\grad\cdot\wone
\big)\,d\y
+ \int_{\Omega_2^{\epsilon}} \big(\,\epsilon\,\E
\grad\,\vepstwo - \pepstwo
\delta\ \big)\bcol \grad\wtwo
 \,d\widetilde{\y} d\y_{\scriptscriptstyle N}
\\
+\,\alpha\int_{\Gamma}\big(\vepstwo\,\cdot\n\big)\, & \big(\wtwo\cdot\n\big)\,dS 
+ \int_{\Gamma} \epsilon^{2} \, \beta \, \sqrt{\Q}
\;\vtangeps \cdot \wtang \,d S 
= \int_{\Omega_2^{\epsilon}} {\mathbf f^{\,2,\,\epsilon}} \cdot
\wtwo \,d\widetilde{\y} \,d\y_{\scriptscriptstyle N} ,
\nonumber
\\
\label{scaled problem 2}
\int_{\Omega_1} \grad\cdot\vepsone\,
\varphi^1 \,d\y &
+ \int_{\Omega_2^{\epsilon}}  \grad\cdot\vepstwo\, \varphi^2
 \,d\widetilde{\y} \,d\y_{\scriptscriptstyle N}
= \int_{\Omega_1} h^{1,\,\epsilon} \, \varphi^1 \, d\y ,
\end{align}
\begin{flushright}
\text{for all } $\big[\w,\varphi\big]\in\X^{\epsilon}\times\Y^{\epsilon}$.
\end{flushright}
\end{subequations}
\item The problem \eqref{scaled problem} is well-posed.

\item The problem \eqref{scaled problem} is equivalent to 
\begin{subequations}\label{problem fixed geometry}
\begin{flushleft}
$ [\v^{\,\epsilon},\p^{\,\epsilon}]\in\X\times\Y : $
\end{flushleft}
\begin{multline}\label{problem fixed geometry 1}
\int_{\Omega_1}  \Q\, \vepsone \cdot \w^1 \,d\x -
\int_{\Omega_1}\pepsone \, \div \wone
\,d\x\\
-  \epsilon \int_{\Omega_2} \pepstwo \, \divt
\wtan \,d\widetilde{\x} \,dz
- \epsilon\Big(1-\frac{1}{\epsilon}\Big)\int_{\Omega_2} \pepstwo \, \partial_{z}\, \wtan\cdot\gradt \zeta \,d\widetilde{\x} \,dz
- \int_{\Omega_2} \pepstwo \, \partial_{z} \wnorm \,d\widetilde{\x} \,dz \\
+\epsilon^{2}\int_{\Omega_2} \E\,\deps\vepstwo \bcol \deps\wtwo\,d\widetilde{\x} \,dz 
+\int_{\Omega_2} \E\,\partial_{z}\,\vtaneps\cdot\partial_{z}\,\wtan \,d\widetilde{\x} \,dz
+\int_{\Omega_2} \E\,\partial_{z}\,\vnormeps\cdot\partial_{z}\,\wnorm \,d\widetilde{\x} \,dz\\
+\alpha
\int_{\Gamma}\big(\vepsone\cdot\n\big)\,\big(\wone\,\cdot\n\big)\,dS
+\epsilon^{2} \int_{\Gamma} \beta \sqrt{\Q} \,\vtangeps \cdot
\wtang\,d S 
= \epsilon\,\int_{\Omega_2} {\f^{2,\epsilon}} \cdot
\wtwo \,d\widetilde{\x} \,dz ,
\end{multline}
\begin{multline}\label{problem fixed geometry 2}
\int_{\Omega_1}\div \vepsone
\varphi^1 \,d\x  
+ \epsilon \int_{\Omega_2}  \divt\vtaneps
\,\varphi^2
 \,d\widetilde{\x} \,dz+
\epsilon\Big(1-\frac{1}{\epsilon}\Big)\int_{\Omega_2} \partial_{z} \vtaneps\cdot\gradt\zeta\,\varphi^{2} \,d\widetilde{\x} \,dz\\
+  \int_{\Omega_2}  \partial_{z} \vnormeps \,\varphi^2
\,d\widetilde{\x} \,dz
= \int_{\Omega_1} h^{1,\,\epsilon} \, \varphi^1 \, d\x ,
\end{multline}
\begin{flushright}
$ \text{for all } [\w,\Phi]\in\X\times\Y $.
\end{flushright}
\end{subequations}
\end{enumerate}
\end{theorem}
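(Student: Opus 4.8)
The plan is to prove the three parts in the order stated, reducing each to results already available in the excerpt. For part (i), I would derive the weak formulation \eqref{scaled problem} from the strong form in the standard way: multiply \eqref{darcy} by a test field $\wone$ and integrate over $\Omega_1$, using the definition of the normal trace (Lemma \ref{Th Surjectiveness from Hdiv to H^1/2}, Equation \eqref{Eq Normal Trace Definition}) to move $\grad\pone$ onto $\wone$, producing the boundary term $\langle \pone, \wone\cdot\n\rangle$ over $\partial\Omega_1$; the drained condition \eqref{BC Drained} kills the part on $\partial\Omega_1 - \Gamma$, leaving only a $\Gamma$-contribution. Similarly, multiply \eqref{Momentum fluid} by $\wtwo \in \X_2^\epsilon$, integrate by parts using $\sigma^2 = 2\epsilon\mu\,\symgrad(\vtwo)$, and split the resulting boundary integral $\int_{\partial\Omega_2^\epsilon} \sigma^2(\n)\cdot\wtwo$ into its tangential and normal parts via the decomposition recalled in the Notation section. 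On $\Gamma+\epsilon$ the boundary conditions \eqref{BC Null Flux}, \eqref{BC tangential boundary condition}, \eqref{BC normal boundary condition} eliminate all terms; on $\Gamma$ the tangential part is replaced using Beavers--Joseph--Saffman \eqref{Beavers - Joseph -Saffman condition} and the normal part using the Robin condition \eqref{Eq Interface Normal Stress Balance}, which also brings in $\pone - \ptwo$ and $\alpha\,\vone\cdot\n$; the $\pone$-term here cancels against the leftover $\Gamma$-term from the Darcy integration by parts precisely because of the matching condition $\vone\cdot\n = \vtwo\cdot\n$ built into $\X^\epsilon$. Finally, testing \eqref{Eq Darcy Mass Conservation} and \eqref{Eq Divergence free condition} against $\varphi^1, \varphi^2$ gives \eqref{scaled problem 2}. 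Conversely, a density/localization argument recovers the strong equations and interface conditions from the weak one.

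For part (ii), I would cast \eqref{scaled problem} into the abstract saddle-point form \eqref{Pblm operators abstrac system} of Theorem \ref{Th well posedeness mixed formulation classic}, with $\X = \X^\epsilon$, $\Y = \Y^\epsilon$, the bilinear form $\A$ being the velocity block (the $\Q$-term on $\Omega_1$, the $\epsilon\mu|\grad\vtwo|^2$-term on $\Omega_2^\epsilon$, the $\alpha$-boundary term and the $\beta$-boundary term), $\B$ the divergence pairing $-\int \varphi\,\div\w$, and $\C = 0$. I then must verify the three hypotheses: (a) $\A$ is nonnegative (immediate, all four terms have a sign by Hypothesis \ref{Hyp Bounds on the Coefficients}) and $\X$-coercive on $\ker\B$ — on divergence-free fields the $\Hdiv$-norm on $\Omega_1$ reduces to the $\mathbf L^2$-norm, controlled by the $\Q$-ellipticity, while on $\Omega_2^\epsilon$ one needs a Korn/Poincaré-type inequality on $\X_2^\epsilon$ to bound $\|\vtwo\|_{\mathbf H^1}$ by $\|\grad\vtwo\|_{0}$, which holds since $\vtwo$ vanishes on $\partial\Omega_2^\epsilon - (\Gamma+\epsilon)$ (a set of positive measure); (b) the inf-sup condition for the divergence on $\Hdiv(\Omega^\epsilon) \times L^2(\Omega^\epsilon)$, which follows from surjectivity of the divergence (or directly from Lemma \ref{Th Surjectiveness from Hdiv to H^1/2}, lifting a given $\varphi$); (c) $\C = 0$ is trivially nonnegative and symmetric. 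The estimate \eqref{mix-est} then yields well-posedness.

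For part (iii), I would apply the change of variables $\varphi$ of Definition \ref{Def Map to Domain of Reference} to every integral in \eqref{scaled problem}. The Jacobian of $\varphi$ has determinant $\epsilon^{-1}$ (only the last row scales), so $d\y = \epsilon\, d\x$ on $\Omega_2^\epsilon$, which accounts for the global factor $\epsilon$ appearing throughout \eqref{problem fixed geometry 1}--\eqref{problem fixed geometry 2}; integrals over $\Omega_1$ and over $\Gamma$ are unchanged. The gradient and divergence terms transform according to \eqref{Eq reorganized gradient structure}, \eqref{Def operator D epsilon} and \eqref{Eq structure divergence canonical basis}: $\prescript{\y}{}\grad\vtwo$ becomes $[\,\deps\vepstwo \;\; \epsilon^{-1}\partial_z\vepstwo\,]$, so the Frobenius product $\grad\vepstwo\bcol\grad\wtwo$ splits into a $\deps\vepstwo\bcol\deps\wtwo$ piece and an $\epsilon^{-2}\partial_z\vepstwo\cdot\partial_z\wtwo$ piece; multiplying by the prefactor $\epsilon\mu$ and the Jacobian factor $\epsilon$ gives the $\epsilon^2$ and the order-one terms in \eqref{problem fixed geometry 1}, after further using Proposition \ref{Th Orthogonal Basis and Spaces Isometry}(i)--(ii) to replace $\partial_z\vepstwo\cdot\partial_z\wtwo$ by $\partial_z\vtaneps\cdot\partial_z\wtan + \partial_z\vnormeps\cdot\partial_z\wnorm$. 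The pressure-divergence term transforms via \eqref{Eq structure divergence canonical basis}, producing the three pressure integrals (the $\divt\wtan$ term, the $(1-\epsilon^{-1})\partial_z\wtan\cdot\gradt\zeta$ term, and the $\partial_z\wnorm$ term) in \eqref{problem fixed geometry 1}, and likewise the divergence constraint \eqref{scaled problem 2} turns into \eqref{problem fixed geometry 2}. Since $\varphi$ is a bi-Lipschitz $C^1$ bijection it induces isomorphisms $\X^\epsilon \cong \X$ and $\Y^\epsilon \cong \Y$, so the test-function quantifiers match and the two formulations are genuinely equivalent.

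The main obstacle is part (ii), specifically the coercivity of $\A$ on $\ker\B$: one must carefully argue that the velocity bilinear form controls the full $\X^\epsilon$-norm, which on the Stokes side requires a Korn-type inequality (the form only sees $\grad\vtwo$, or in a sharper reading $\symgrad(\vtwo)$) together with a Poincaré inequality using the homogeneous boundary condition on $\partial\Omega_2^\epsilon - (\Gamma+\epsilon)$, and on the Darcy side requires noting that on divergence-free fields the full $\Hdiv(\Omega_1)$-norm collapses to the $\mathbf L^2$-norm handled by $\Q$-ellipticity. All constants here depend on $\epsilon$, which is acceptable for well-posedness at fixed $\epsilon$ but is exactly the point that the subsequent asymptotic analysis must handle with uniform (in $\epsilon$) a priori estimates; I would flag this but not pursue it in this theorem. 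The remaining pieces — the integration-by-parts bookkeeping in (i) and the Jacobian/chain-rule bookkeeping in (iii) — are routine given the operator identities already established in Section \ref{Sec Geometric Setting}.
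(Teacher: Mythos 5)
Your proposal is correct and follows essentially the same route as the paper, which for parts (i) and (ii) simply cites Proposition 3 and Theorem 6 of \cite{ShowMor17} (the latter being exactly your strategy of identifying $\A$, $\B$, $\C$ and checking the hypotheses of Theorem \ref{Th well posedeness mixed formulation classic}), and for part (iii) performs the direct substitution of \eqref{Eq reorganized gradient structure}, \eqref{Def operator D epsilon} and \eqref{Eq structure divergence canonical basis} with the Jacobian factor $\epsilon^{-1}$, as you do. The only cosmetic remark is that the bilinear form in \eqref{scaled problem 1} already involves the full gradient $\grad\vepstwo\bcol\grad\wtwo$ (the symmetric gradient having been eliminated via the divergence-free condition before the weak formulation), so coercivity on the Stokes side needs only Poincar\'e and no Korn inequality.
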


\begin{proof}
\begin{enumerate}[(i)]
\item See Proposition 3 in \cite{ShowMor17}, we simply highlight that the term $ \int_{\Omega_{2}} \epsilon^{2} \beta \sqrt{\Q} \, \vtwo(\tang) \cdot \wtwo(\tang) \, dS $ has been replaced by $ \int_{\Omega_{2}} \epsilon^{2} \beta \sqrt{\Q} \, \vtang \cdot \wtang \, dS $, due to the isometric identities \eqref{Eqn Isometric Equalities}. 

\item See Theorem 6 in \cite{ShowMor17}. The technique identifies the operators $ \A, \B, \C $ in the variational statements \eqref{scaled problem 1} and \eqref{scaled problem 2}, then it verifies that these operators satisfy the hypotheses of Theorem \ref{Th well posedeness mixed formulation classic}; the result delivers well-posedness.

\item A direct substitution of the expressions \eqref{Eq reorganized gradient structure} and \eqref{Eq structure divergence canonical basis} in the statements \eqref{scaled problem}, combined with the definition \eqref{Def operator D epsilon} yields the result. Also notice that the determinant of the matrix in the right hand side of the equation \eqref{Eq reorganized gradient structure} is equal to $ \epsilon^{-1} $. Finally, observe that the boundary conditions of space $ \X_{2}^{\epsilon}$, defined in \eqref{Def Velocities Two} are transformed into the boundary conditions of $ \X_{2} $ because none of them involve derivatives.
\qed
\end{enumerate}
\end{proof}
\begin{remark}\label{Rem Notation}
In order to prevent heavy notation, from now on, we denote the volume integrals by $ \int_{\Omega_{1}} F = \int_{\Omega_{1}} F \, d\x $ and $ \int_{\Omega_{2}} F = \int_{\Omega_{2}} F \, d\xthilde\,dz $. We will use the explicit notation $ \int_{\Omega_{2}} F \, d\xthilde\,dz $ only for those cases where specific calculations are needed. Both notations will be clear from the context.
\end{remark}

%
%
\section{Asymptotic Analysis}  \label{sec-convergence}
%
%
%
%
\noindent In this section, we present the asymptotic analysis of the problem i.e., we obtain a-priori estimates for the solutions $ \big((\veps, \peps): \epsilon > 0\big) $, derive weak limits and conclude features about those (velocity and pressure) limits. We start recalling a classical space. 
\begin{definition}\label{Def One Derivative Space}
Let $\Omega_{2} $ as in Definition \ref{Hyp Interface Geometric Conditions} and define the Hilbert space
\begin{subequations}\label{Def Higher Order Normal Trace Space}
\begin{align}\label{H partial z}
\Hpartial & \defining
\big\{ w\in L^{2}(\Omega_{2}):
\partial_{z}\,w\in\,L^{2}(\Omega_{2})\big\} ,\\
\Hboldpartial & \defining
\big\{ \w\in L^{2}(\Omega_{2}):
\partial_{z}\,\w\in\,L^{2}(\Omega_{2})\big\} ,
\end{align}
endowed with its natural inner product
\end{subequations}
\end{definition}
\begin{lemma}\label{Th Trace on One Derivative Space}
\begin{enumerate}[(i)]
\item 
Let $\Hpartial$ be the space introduced in Definition \ref{Def One
Derivative Space}, then the trace map $ w\mapsto w \big\vert_{\Gamma}$
from $\Hpartial$ to $L^{2}(\Gamma)$ is well-defined. Moreover, the
following Poincar\'e-type inequalities hold in this space
\begin{subequations}\label{Ineq Estimates on One Derivative Space}
\begin{equation}\label{Ineq Trace on One Derivative Space}
\Vert w \Vert_{0,\Gamma}\;
\leq 
\sqrt{2} \, \Big(\Vert w \Vert_{0, \Omega_{2}}
+ \Vert  \partial_{z}\,w\Vert_{0,\Omega_{2}} \Big),
\end{equation}
\begin{equation}\label{Ineq Conrol by Trace on One Derivative Space}
\Vert w \Vert_{0,\Omega_{2}}\;
\leq \sqrt{2} \, \Big(\Vert 
\partial_{z}\,w\Vert_{0,\Omega_{2}}
+\Vert w \Vert_{0,\Gamma} \Big),
\end{equation}
\end{subequations}
for all $ w \in \Hpartial $.

\item Let $ \Hboldpartial $ be the vector space introduced in Definition \ref{Def One Derivative Space} then, for any $ \w \in \Hboldpartial $ the estimates analogous to \eqref{Ineq Conrol by Trace on One Derivative Space} hold. 

\item Let $ \wtwo \in \mathbf{H}^{1}(\Omega_{2}) \subset \Hboldpartial $ and let $ \wnormal, \wtang $ be as defined in \eqref{Eq definition of normal and tangential velocity}, then
\begin{subequations}\label{Eq poincare new directions}
\begin{equation}\label{Eq poincare normal}
\big\Vert\, \wnormal\,\big\Vert_{0,\Omega_{2}}\;
\leq\big\Vert\,\partial_{z}\,\wnormal\,\big\Vert_{0,\Omega_{2}}
+2\,\Vert\,\wnormal\,\Vert_{0,\Gamma} ,
\end{equation}
\begin{equation}\label{Eq poincare tangential}
\big\Vert\, \wtang\,\big\Vert_{0,\Omega_{2}}\;
\leq\big\Vert\,\partial_{z}\,\wtang\,\big\Vert_{0,\Omega_{2}}
+2\,\Vert\,\wtang\,\Vert_{0,\Gamma} .
\end{equation}
\end{subequations}
\end{enumerate}
\end{lemma}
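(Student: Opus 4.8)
\noindent The plan is to reduce all three assertions to a one–dimensional estimate along the vertical segments of $\Omega_{2}$. Since $\Omega_{2}=\Omega_{2}^{1}$, for every $\xthilde\in G$ the segment $I_{\xthilde}=\{\xthilde\}\times\big(\zeta(\xthilde),\,\zeta(\xthilde)+1\big)$ has length exactly $1$, and it is this uniformity — not any property of $\zeta$ beyond boundedness — that makes the argument go through; the curvature of $\Gamma$ never enters. First I would make sense of the trace in $\Hpartial$: by Fubini's theorem, for a.e.\ $\xthilde\in G$ the section $z\mapsto w(\xthilde,z)$ lies in $H^{1}(I_{\xthilde})$, hence agrees a.e.\ with an absolutely continuous function (one–dimensional Sobolev embedding), so its value at the left endpoint $z=\zeta(\xthilde)$ — the point of $\Gamma$ lying above $\xthilde$ — is well defined and the fundamental theorem of calculus holds on $I_{\xthilde}$. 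That $\xthilde\mapsto w(\xthilde,\zeta(\xthilde))$ then belongs to $L^{2}(\Gamma)$ will come out of the estimate \eqref{Ineq Trace on One Derivative Space} itself, after converting the surface integral over $\Gamma$ into an integral over $G$ against the graph density $\sqrt{1+|\gradt\zeta|^{2}}$, which is bounded above and below by positive constants on $\cl(G)$ because $\zeta\in C^{2}(G_{0})$ and $\cl(G)\subset G_{0}$.

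\smallskip

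\noindent For part (i), I would fix such an $\xthilde$ and start from
\begin{equation*}
w(\xthilde,\zeta(\xthilde))=w(\xthilde,z)-\int_{\zeta(\xthilde)}^{z}\partial_{z}w(\xthilde,s)\,ds ,\qquad z\in I_{\xthilde};
\end{equation*}
bounding the integral by Cauchy–Schwarz (the interval of integration has length $\le 1$), squaring, using $(a+b)^{2}\le 2a^{2}+2b^{2}$, integrating over $z\in I_{\xthilde}$ (a unit interval) and then over $\xthilde\in G$ yields \eqref{Ineq Trace on One Derivative Space}. Swapping the two terms — i.e.\ starting from $w(\xthilde,z)=w(\xthilde,\zeta(\xthilde))+\int_{\zeta(\xthilde)}^{z}\partial_{z}w(\xthilde,s)\,ds$, integrating first in $z$ over $I_{\xthilde}$ and then in $\xthilde$ over $G$ — gives \eqref{Ineq Conrol by Trace on One Derivative Space}. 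Part (ii) then follows by applying the scalar estimates componentwise to $\w\in\Hboldpartial$ and adding the squared inequalities.

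\smallskip

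\noindent For part (iii) the point is that, because the stream line localizer $\xthilde\mapsto U(\xthilde)$ is of class $C^{1}$ — hence so are the fields $\xthilde\mapsto\n\pxthilde$ and $\xthilde\mapsto\nuversor_{i}(\xthilde)$ — the functions $\wnormal=\wtwo\cdot\n\pxthilde$ and $\wtang=\big(\wtwo\cdot\nuversor_{1},\dots,\wtwo\cdot\nuversor_{N-1}\big)$ belong to $H^{1}(\Omega_{2})$ and $\big[H^{1}(\Omega_{2})\big]^{N-1}$ respectively (the terms produced by differentiating $\n$ and $\nuversor_{i}$ sit in $L^{2}$ since these fields are bounded on $\cl(G)$). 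Thus $\wnormal,\wtang$ have ordinary $L^{2}$ traces on $\Gamma$ and the one–dimensional argument of (i)–(ii) applies to them verbatim. To land on the sharper constants displayed in \eqref{Eq poincare new directions} one refrains from the crude squaring step and keeps the fibrewise bound in the form
\begin{equation*}
\big|\wnormal(\xthilde,z)\big|\le\big|\wnormal(\xthilde,\zeta(\xthilde))\big|+\Big(\int_{\zeta(\xthilde)}^{\zeta(\xthilde)+1}\big|\partial_{z}\wnormal(\xthilde,s)\big|^{2}\,ds\Big)^{1/2},\qquad z\in I_{\xthilde}
\end{equation*}
(valid by Cauchy–Schwarz since $|I_{\xthilde}|=1$); taking the $L^{2}(I_{\xthilde})$–norm in $z$, then the triangle inequality in $L^{2}(G)$, yields \eqref{Eq poincare normal}, and the same chain run coordinate by coordinate yields \eqref{Eq poincare tangential}.

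\smallskip

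\noindent The step I expect to require the most care is the first one: showing that an $\Hpartial$–function admits a trace on the \emph{curved} floor $\Gamma$ and that this trace is square integrable there, which rests on the Fubini / one–dimensional Sobolev argument together with the boundedness (from above and below) of the graph Jacobian of $\zeta$. Everything afterwards is one–dimensional calculus on segments of the fixed length $1$, so the geometry of $\Gamma$ leaves no footprint in the inequalities — which is precisely the advantage of not flattening the interface.
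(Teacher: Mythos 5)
Your proposal is correct and follows essentially the same route as the paper: the paper's proof is a one-line appeal to the fundamental theorem of calculus on the dense subspace of smooth functions, which is exactly the fibrewise argument along the unit-length vertical segments that you carry out in detail (your Fubini/absolute-continuity justification and the paper's density argument are interchangeable here). Your extra care with the graph Jacobian $\sqrt{1+|\gradt\zeta|^{2}}$ and with the sharper constants in part (iii) only fills in details the paper leaves implicit.
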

\begin{proof}
\begin{enumerate}[(i)]
\item 
The proof is a direct application of the fundamental theorem of
calculus on the smooth functions $ C^{\infty}(\Omega_{2}) $ which is a
dense subspace in $ \Hpartial $.

\item A direct application of equations \eqref{Ineq Estimates on One Derivative Space} on each coordinate of $ \w \in \Hboldpartial $ delivers the result.

\item It follows from a direct application of (i) and  (ii) on $ \wnormal $, $ \wtang $ respectively. 
\qed
\end{enumerate}
\end{proof}
Next we show that the sequence of solutions is globally bounded under the following hypotheses. 
\begin{hypothesis}\label{Hyp Bounds on the Forcing Terms}
In the following, it will be assumed that the sequences
$ (\f^{2,\epsilon}: \epsilon > 0)\subseteq
\mathbf{L}^{2}(\Omega_{2}) $ and $ (h^{1,\epsilon}: \epsilon >
0)\subseteq L^{2}(\Omega_{1}) $ are bounded, i.e., there exists $ C > 0 $
such that
\begin{align}\label{Ineq Bounds on the Forcing Terms}
& \big\Vert \f^{2, \epsilon} \big\Vert_{0, \Omega_{2} }
\leq  C,&
& \big\Vert h^{1, \epsilon} \big\Vert_{0, \Omega_{1} } 
\leq C,&
& \text{for all } \, \epsilon > 0 . 
\end{align}
\end{hypothesis}
\begin{theorem}[Global a-priori Estimate]\label{Th A-priori Estimates of Velocity}
Let $[\veps, \peps]\in \X\times \Y$ be the solution to the Problem
\eqref{problem fixed geometry}. There exists a constant $ K>0 $ such that
\begin{align}\label{general a priori estimate}
%
& \big\Vert\vepsone\big\Vert_{0,\Omega_{1}}^{2}
+\big\Vert 
\,\deps\big(\,\epsilon\,\vepstwo\,\big)\big\Vert_{0,\Omega_{2}}^{2}
 +\big\Vert \partial_{z}\vtaneps\big\Vert_{0,\Omega_{2}}^{2} 
%
+\big\Vert \partial_{z}\vnormeps\big\Vert_{0,\Omega_{2}}^{2}
+\big\Vert\vnormaleps \big\Vert_{0,\Gamma}^{2}
+\big\Vert\epsilon\,\vtangeps \big\Vert_{0,\Gamma}^{2}
 \leq K ,
&
&\text{for all }\, \epsilon > 0.
 %
\end{align}
\end{theorem}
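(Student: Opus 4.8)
The plan is to obtain the a-priori estimate by the standard energy method: test the weak formulation \eqref{problem fixed geometry} with the solution itself and estimate the resulting terms. First I would set $\w = \vepsone$ in the first block and $\varphi^{1} = \pepsone$, $\varphi^{2} = \pepstwo$ in the second block of \eqref{problem fixed geometry}, then subtract. This cancels all the pressure terms, since every pressure term in \eqref{problem fixed geometry 1} reappears with the opposite sign in \eqref{problem fixed geometry 2} once we replace $\wtan$ by $\vtaneps$, $\wnorm$ by $\vnormeps$. What remains on the left is the ``energy'': $\int_{\Omega_1} \Q\,\vepsone\cdot\vepsone + \epsilon^{2}\int_{\Omega_2}\E\,\deps\vepstwo\bcol\deps\vepstwo + \int_{\Omega_2}\E\,|\partial_{z}\vtaneps|^{2} + \int_{\Omega_2}\E\,|\partial_z\vnormeps|^{2} + \alpha\int_\Gamma(\vepsone\cdot\n)^{2} + \epsilon^{2}\int_\Gamma\beta\sqrt{\Q}\,|\vtangeps|^{2}$, and on the right the two forcing terms $-\int_{\Omega_1} h^{1,\epsilon}\pepsone + \epsilon\int_{\Omega_2}\f^{2,\epsilon}\cdot\vepstwo$.

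The lower bound on the left-hand side is where Hypothesis \ref{Hyp Bounds on the Coefficients} enters: coercivity of $\Q$ gives $C_\Q\|\vepsone\|_{0,\Omega_1}^{2}$, positivity of $\mu$ and nonnegativity of $\alpha$, $\beta$ give control of $\|\deps(\epsilon\vepstwo)\|_{0,\Omega_2}^{2}$ (note $\epsilon^{2}\deps\vepstwo\bcol\deps\vepstwo = |\deps(\epsilon\vepstwo)|^{2}$ since $\deps$ is linear and $\epsilon$ is constant), $\|\partial_z\vtaneps\|_{0,\Omega_2}^{2}$, $\|\partial_z\vnormeps\|_{0,\Omega_2}^{2}$ and the boundary term $\|\epsilon\vtangeps\|_{0,\Gamma}^{2}$. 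Thus all terms on the left of \eqref{general a priori estimate} except $\|\vnormaleps\|_{0,\Gamma}^{2}$ are directly dominated by the energy. For the missing normal trace term, I would use the admissibility constraint $\vepsone\cdot\n = \vepstwo\cdot\n = \vnormaleps$ on $\Gamma$ encoded in the space $\X$: either it is controlled by the $\alpha$-term when $\alpha$ is bounded below, or — to avoid assuming that — bound $\|\vnormaleps\|_{0,\Gamma}$ by $\|\vepsone\cdot\n\|_{0,\Gamma}\le\|\vepsone\|_{\Hdiv(\Omega_1)}$ via Lemma \ref{Th Surjectiveness from Hdiv to H^1/2} together with the divergence equation \eqref{problem fixed geometry 2}, which expresses $\div\vepsone = h^{1,\epsilon}$ and hence bounds $\|\vepsone\|_{\Hdiv(\Omega_1)}$ in terms of $\|\vepsone\|_{0,\Omega_1}$ and the data.

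For the right-hand side, the second forcing term is easy: $\epsilon\int_{\Omega_2}\f^{2,\epsilon}\cdot\vepstwo \le \epsilon\|\f^{2,\epsilon}\|_{0,\Omega_2}\|\vepstwo\|_{0,\Omega_2}$, and by the isometry \eqref{Eq norms preserved} together with the Poincaré inequalities \eqref{Eq poincare normal}, \eqref{Eq poincare tangential} we control $\epsilon\|\vepstwo\|_{0,\Omega_2}$ by $\|\partial_z(\epsilon\vtaneps)\|+\|\partial_z(\epsilon\vnormeps)\|+\|\epsilon\vtangeps\|_{0,\Gamma}+\|\epsilon\vnormaleps\|_{0,\Gamma}$, all of which (times $\epsilon\le1$ on the derivative terms) sit inside the energy; then Young's inequality with a small parameter absorbs these into the left. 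The term involving $h^{1,\epsilon}$ is the genuine obstacle: it pairs the data against the pressure $\pepsone$, which is \emph{not} among the quantities the energy controls, so a naive Cauchy–Schwarz fails. The standard remedy is an inf-sup / Babuška–Brezzi argument for the pressure: using the surjectivity of the normal trace (Lemma \ref{Th Surjectiveness from Hdiv to H^1/2}) and the structure of $\X$ one constructs a test velocity $\w$ with $\div\wone$ matching $\pepsone$ (and controlled boundary behaviour), plugs it into \eqref{problem fixed geometry 1}, and bounds $\|\pepsone\|_{0,\Omega_1}$ by the already-estimated velocity norms plus the data; equivalently one invokes the well-posedness estimate \eqref{mix-est} of Theorem \ref{Th well posedeness mixed formulation classic} with the operators identified in \cite{ShowMor17}, being careful to track the $\epsilon$-dependence of the constants so that they remain bounded as $\epsilon\to0$. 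Once the pressure is controlled, $\int_{\Omega_1}h^{1,\epsilon}\pepsone$ is handled by Cauchy–Schwarz and Young, everything is absorbed, and \eqref{general a priori estimate} follows with $K$ depending only on $C$, $C_\Q$, $\mu$, $\delta$ and $\Omega$. I expect the $\epsilon$-uniform control of the inf-sup constant for the pressure to be the main technical point requiring care.
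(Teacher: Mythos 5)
Your overall strategy coincides with the paper's: test \eqref{problem fixed geometry} on the diagonal, cancel the mixed pressure terms, bound the energy from below by coercivity, and absorb the forcing terms. You also correctly identify the sole genuine difficulty, namely the pairing $\int_{\Omega_1} h^{1,\epsilon}\,\pepsone$. Where you diverge is in how that term is handled. You propose an inf-sup/Babu\v{s}ka--Brezzi argument to control $\Vert \pepsone \Vert_{0,\Omega_1}$, flagging the $\epsilon$-uniformity of the inf-sup constant as the main technical point. The paper avoids this machinery entirely with a two-line observation: since the solution satisfies the Darcy momentum relation \eqref{darcy} (obtained by testing \eqref{problem fixed geometry 1} with compactly supported $\wone$), one has $\grad \pepsone = -\Q\,\vepsone$ in $L^{2}(\Omega_1)$, and the drained condition \eqref{BC Drained} makes Poincar\'e's inequality applicable, so $\Vert \pepsone\Vert_{0,\Omega_1} \leq C\,\Vert \grad\pepsone\Vert_{0,\Omega_1} = \Vert \Q\,\vepsone\Vert_{0,\Omega_1} \leq \widetilde{C}\,\Vert\vepsone\Vert_{0,\Omega_1}$, which is already an energy quantity. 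This sidesteps any $\epsilon$-dependence issue and makes the whole proof elementary; the final absorption is then done not by Young's inequality but by the equivalence of the $\ell^{1}$ and $\ell^{2}$ norms on the vector of energy terms, yielding $E \leq C E^{1/2}$ and hence $E \leq C^{2}$ --- a cosmetic difference from your Young-with-small-parameter step. Your route would also work if carried through, but it purchases nothing here and costs a uniform inf-sup estimate that the paper never needs. One further remark: your concern about the trace term $\Vert \vnormaleps\Vert_{0,\Gamma}^{2}$ when $\alpha$ is not bounded below is legitimate --- the paper simply writes $\Vert \vepsone\cdot\n\Vert_{0,\Gamma}^{2}$ on the left of \eqref{a priori estimate} as if the $\alpha$-term supplied it --- and your fallback via $\vepsone\cdot\n = \vnormaleps$ on $\Gamma$ and the normal trace of $\Hdiv(\Omega_1)$ is a reasonable repair, though note that the normal trace of an $\Hdiv$ function lands a priori only in $H^{-1/2}(\Gamma)$, not $L^{2}(\Gamma)$, so that patch needs more care than your sketch suggests.
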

\begin{proof}
Set $\w = \v^{\epsilon}$ in \eqref{problem fixed geometry 1} and
$\varphi = p^{\epsilon}$ in \eqref{problem fixed geometry 2} and add
them together. In addition, apply the
Cauchy-Bunyakowsky-Schwartz inequality to the right hand side and recall the
Hypothesis \ref{Hyp Bounds on the Coefficients}, this gives
\begin{multline}\label{a priori estimate}
\big\Vert \,\vepsone\, \big\Vert_{0, \Omega_{1}}^{2}
+\epsilon^{\,2}\int_{\Omega_{2}}
\deps\,\vepstwo:\deps\,\vepstwo 
%
+\big\Vert \, \partial_{z}\vtaneps\, \big\Vert_{0,\Omega_{2}}^{2}
+\big\Vert \, \partial_{z}\vnormeps\,\big\Vert_{0,\Omega_{2}}^{2}
+\,\big\Vert \,\vepsone\cdot\n \,\big\Vert_{0,\Gamma}^{2}
+\big\Vert \,\epsilon\,\vtangeps\,\big\Vert_{0,\Gamma}^{2}\\
\leq \frac{1}{k} \Big(\big\Vert\f^{2,\epsilon}_\tau\big\Vert_{0,\Omega_{2}} \big\Vert
\big(\epsilon\, \vtangeps\big) \big\Vert_{0,\Omega_{2}}
+\big\Vert\f^{2,\epsilon}_{\n}\big\Vert_{0,\Omega_{2}} \big\Vert
\big(\epsilon\, \vnormaleps \big)\big\Vert_{0,\Omega_{2}}
+\int_{\Omega_1} h^{1,\,\epsilon} \, \pepsone \, dx \,\Big) .
\end{multline}
The mixed terms were canceled out on the diagonal.  
We focus on the summand involving an integral in the right hand side of the expression above, i.e.,
\begin{equation}\label{estimate on the first integral}
\begin{split}
\int_{\Omega_1} h^{1,\epsilon} \, \pepsone \, dx
& \leq  \big\Vert \,
\pepsone \,\big\Vert_{0,\Omega_{1}}
\big\Vert \,h^{1,\epsilon} \,\big\Vert_{0,\Omega_{1}}\\
& \leq C 
\big\Vert \,\grad \pepsone \,\big\Vert_{0,\Omega_{1}}\big\Vert \,
h^{1,\epsilon} \,\big\Vert_{0,\Omega_{1}} 
=\big\Vert \, \Q\,\vepsone \,\big\Vert_{0,\Omega_{1}}
\big\Vert \,h^{1,\epsilon} \,\big\Vert_{0,\Omega_{1}}
\leq \widetilde{C}
\big\Vert \, \vepsone \,\big\Vert_{0,\Omega_{1}} .
\end{split} 
\end{equation}
The second inequality holds due to Poincar\'e's inequality, given that $ \pepsone = 0 $ on $ \partial \Omega_{1} - \Gamma $, as stated in Equation \eqref{BC Drained}. The equality holds due to \eqref{darcy}. The third inequality holds because the tensor $ \Q $ and the family of sources $ (h^{1,\epsilon}:\epsilon>0)\subset L^{2}(\Omega_{1}) $ are bounded as stated in Hypothesis \ref{Hyp Bounds on the Coefficients} and Hypothesis \ref{Hyp Bounds on the Forcing Terms} (Equation \eqref{Ineq Bounds on the Forcing Terms}), respectively. Next, we control the $ L^2(\Omega_{2}) $-norm of $ \vepstwo $. Since $ \vepstwo\in \mathbf{H}^{1}(\Omega_{2}) \subset \Hboldpartial $, the estimates \eqref{Eq poincare new directions} apply;
combining them with \eqref{estimate on the first
integral}, the bound \eqref{Ineq Bounds on the Forcing Terms} (from Hypothesis \ref{Hyp Bounds on the Forcing Terms}) in Inequality \eqref{a priori estimate} we have
\begin{equation*}
\begin{split}
\big\Vert \,\vepsone\, \big\Vert_{0,\Omega_{1}}^{\,2}
& 
+\epsilon^{2}\int_{\Omega_{2}}\,
\deps\,\vepstwo:\deps\,\vepstwo 
+\big\Vert \, \partial_{z}\vtaneps\, \big\Vert_{0,\Omega_{2}}^{\,2}
+\big\Vert \, \partial_{z}\vnormeps\,\big\Vert_{0,\Omega_{2}}^{\,2}
+\,\big\Vert \,\vepsone\cdot\n \,\big\Vert_{0,\Gamma}^{\,2}
+\big\Vert \,\epsilon\,\vtangeps \,\big\Vert_{0,\Gamma}^{\,2}\\
& \leq C\Big(\big\Vert\,
\partial_{z}\,\big(\epsilon\,\vtangeps\big)\,\big\Vert_{0,\Omega_{2}}
+2\,\big\Vert\,\big(\epsilon\,\vtangeps\big)\,\big\Vert_{0,\Gamma}
+\big\Vert\,
\partial_{z}\,\big(\epsilon\,\vnormaleps\big)\,\big\Vert_{0,\Omega_{2}}
+2\,\big\Vert\,\big(\epsilon\,\vnormaleps\big)\,\big\Vert_{0,\Gamma}
+\widetilde{C}
\big\Vert\,\vepsone\, \big\Vert_{0,\Omega_{1}} \,\Big)\\
& \leq  C \Big(\big\Vert\,
\partial_{z}\,\big(\epsilon\,\vtaneps\big)\,\big\Vert_{0,\Omega_{2}}
+2\,\big\Vert\,\big(\epsilon\,\vtangeps\big)\,\big\Vert_{0,\Gamma}
+\big\Vert\,
\partial_{z}\,\big(\epsilon\,\vnormeps\big)\,\big\Vert_{0,\Omega_{2}}
+2\,\big\Vert\,\big(\epsilon\,\vnormaleps\big)\,\big\Vert_{0, \Gamma}
+\widetilde{C}
\big\Vert
\,\vepsone\, \big\Vert_{0, \Omega_{1}} \Big) .
\end{split}
\end{equation*}
Using the equivalence of norms $\Vert \cdot \Vert_{\,1}\,,
\Vert \cdot \Vert_{\,2}$ for 5-D vectors yields
\begin{equation}\label{overall bound}
\begin{split}
\big\Vert \,\vepsone\, \big\Vert_{0,\Omega_{1}}^{\,2}
+\epsilon^{2} & \int_{\Omega_{2}}\,
\deps\,\vepstwo:\deps\,\vepstwo 
+\big\Vert \, \partial_{z}\vtaneps\, \big\Vert_{0,\Omega_{2}}^{\,2}
+\big\Vert \, \partial_{z}\vnormeps\,\big\Vert_{0,\Omega_{2}}^{\,2}
+\,\big\Vert \,\vepsone\cdot\n \,\big\Vert_{0,\Gamma}^{\,2}
+\big\Vert \,\epsilon\,\vtangeps \,\big\Vert_{0,\Gamma}^{\,2}\\
\leq C \Big\{ & \big\Vert\,
\partial_{z}\,\big(\epsilon\,\vtaneps\big)\,\big\Vert_{0,\Omega_{2}}^{2}
+\big\Vert\,\big(\epsilon\,\vtangeps\big)\,\big\Vert_{0,\Gamma}^{2}
+\big\Vert\,
\partial_{z}\,\big(\epsilon\,\vnormeps\big)\,\big\Vert_{0,\Omega_{2}}^{2}
+\big\Vert\,\big(\epsilon\,\vnormaleps\big)\,\big\Vert_{0,\Gamma}^{2}
+\widetilde{C}
\big\Vert
\,\vepsone\, \big\Vert_{0,\Omega_{1}}^{2} \,\Big\}^{1/2}\\
\leq C\Big\{ 
& 
\big\Vert \,\vepsone\, \big\Vert_{0,\Omega_{1}}^{2}
+\big\Vert\,\,\deps\big(\,\epsilon\,\vepstwo\,\big)\,\big\Vert_{0,\Omega_{2}}^{2}
+\big\Vert \, \partial_{z}\vtaneps\, \big\Vert_{0,\Omega_{2}}^{2} 
%
+\big\Vert \, \partial_{z}\vnormeps\,\big\Vert_{0,\Omega_{2}}^{2}
+\,\big\Vert \,\vnormaleps \,\big\Vert_{0,\Gamma}^{2}
+\big\Vert \,\epsilon\,\vtangeps \,\big\Vert_{0,\Gamma}^{2}\Big\}^{1/2} .
\end{split}
\end{equation}
From the expression above, the global Estimate \eqref{general a priori estimate} follows.
\qed
\end{proof}
In the next subsections we use weak convergence arguments to derive the functional setting of the limiting problem, see Figure \ref{Fig Sketch Limit} for the structure of the limiting functions.
\begin{corollary}[Convergence of the Velocities]\label{Th Direct Weak Convergence of Velocities}
Let $[\veps, \peps]\in \X\times \Y$ be the solution to the Problem
\eqref{problem fixed geometry}. There exists a subsequence, still
denoted $(\veps:\epsilon>0)$ for which the following holds.
\begin{enumerate}[(i)]
\item There exist $\vone\in \Hdiv(\Omega_{1}) $ such that
%
%
\begin{subequations}\label{Eq limit velocity one}
\begin{equation}\label{velocity omega 1}
\vepsone\rightarrow \vone\quad\text{weakly in}\quad
\Hdiv(\Omega_1) ,
\end{equation}
\begin{equation}\label{Eq velocity one conservation}
\div \vepsone  = h^{1}.
\end{equation}
\end{subequations}
%
%

\item There exist $ \boldsymbol{\chi}\in \mathbf{L}^{2}(\Omega_{2}) $ and $ \vtwo\in
\mathbf{H}^{1}(\Omega_{2}) $ such that 
\begin{subequations}\label{Stmt Higher Order Velocity Weak Convergence}
\begin{align}\label{Eq epsilon partial tangential L2 weak}
& \partial_{z}\vepstwo \rightarrow \boldsymbol{\chi} \quad\text{weakly in}\quad 
\mathbf{L}^{2}(\Omega_{2})\,, &
& \partial_{z} \big(\epsilon\,\vepstwo\big)
\rightarrow \bm{0} \quad \text{strongly in}\quad 
\mathbf{L}^{2}(\Omega_{2}),
\end{align}
\begin{align}\label{funcL2weak}
& \epsilon\,\vepstwo\rightarrow\vtwo
\quad\text{weakly in}\quad \mathbf{H}^{1}(\Omega_{2}) ,&
&\text{strongly in}\quad \mathbf{L}^{2}(\Omega_{2}) ,
\end{align}
moreover $ \vtwo $ satisfies 
\begin{equation}\label{solutionconvergence}
\vtwo =\vtwo\,(\xthilde) \, .
\end{equation}
\end{subequations}

\item There exists $ \xi\in \Hpartial $ such that
\begin{subequations}\label{Eq Limit Normal Velocity}
\begin{align}\label{convergence of normal velocity}
& \vnormaleps \rightarrow \xi \quad\text{weakly in }\, \Hpartial , &
& \big(\epsilon\,\vnormaleps\big)
\rightarrow 0 \quad \text{strongly in} \; \Hpartial ,
\end{align}
furthermore, $ \xi $ satisfies the interface and boundary conditions
\begin{align}\label{boundary conditions on normal velocity}
& \xi\big\vert_{\Gamma} = \vone\cdot \n\big\vert_{\Gamma} \, , &
& \xi\big\vert_{\Gamma + 1} = 0 .
\end{align}
\end{subequations}
\item The following properties hold
\begin{align}\label{Eq vtwo tangential and higher order terms relationship}
& \vtwo \cdot \n = 0 , &
& \boldsymbol{\chi}\cdot\n = \partial_{z}\xi.
\end{align}
\end{enumerate}
\end{corollary}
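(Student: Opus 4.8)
The plan is to extract the four stated properties from the global a-priori estimate (Theorem \ref{Th A-priori Estimates of Velocity}) by standard weak-compactness arguments, then to identify the limits and their structural constraints. First, from \eqref{general a priori estimate} the family $(\vepsone:\epsilon>0)$ is bounded in $\mathbf{L}^2(\Omega_1)$, and since $\div\vepsone=h^{1,\epsilon}$ is bounded in $L^2(\Omega_1)$ by Hypothesis \ref{Hyp Bounds on the Forcing Terms}, the family is bounded in $\Hdiv(\Omega_1)$. Passing to a subsequence gives \eqref{velocity omega 1}; since weak convergence in $\Hdiv$ implies $\div\vepsone\rightharpoonup\div\vone$ while $h^{1,\epsilon}$ converges (along a further subsequence) weakly to some $h^1$, we obtain \eqref{Eq velocity one conservation}. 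For part (ii): the estimate controls $\big\Vert\deps(\epsilon\,\vepstwo)\big\Vert_{0,\Omega_2}$, $\big\Vert\partial_z\vtaneps\big\Vert_{0,\Omega_2}$ and $\big\Vert\partial_z\vnormeps\big\Vert_{0,\Omega_2}$; by Proposition \ref{Th Orthogonal Basis and Spaces Isometry}(ii) the last two combine to bound $\big\Vert\partial_z\vepstwo\big\Vert_{0,\Omega_2}$, and since $\epsilon\,\vtangeps$ is bounded on $\Gamma$, the Poincar\'e inequalities of Lemma \ref{Th Trace on One Derivative Space}(iii) show $\big\Vert\epsilon\,\vepstwo\big\Vert_{0,\Omega_2}$ is bounded. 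Reading off the definition \eqref{Def operator D epsilon}, $\deps(\epsilon\,\vepstwo)=\gradt(\epsilon\,\vepstwo)+(1-\tfrac1\epsilon)\partial_z(\epsilon\,\vepstwo)\gradt^{t}\zeta = \gradt(\epsilon\,\vepstwo)+(\epsilon-1)\partial_z\vepstwo\,\gradt^{t}\zeta$; since $\partial_z\vepstwo$ and $\zeta\in C^2$ are bounded, boundedness of $\deps(\epsilon\,\vepstwo)$ forces boundedness of $\gradt(\epsilon\,\vepstwo)$ in $\mathbf{L}^2(\Omega_2)$. Together with $\partial_z(\epsilon\,\vepstwo)=\epsilon\,\partial_z\vepstwo\to\bm0$ strongly (again because $\partial_z\vepstwo$ is bounded), this yields that $\epsilon\,\vepstwo$ is bounded in $\mathbf{H}^1(\Omega_2)$, hence \eqref{funcL2weak} by Rellich, and $\partial_z\vtwo=\bm0$, i.e. \eqref{solutionconvergence}. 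The weak limit $\boldsymbol\chi$ of $\partial_z\vepstwo$ in $\mathbf{L}^2(\Omega_2)$ exists by boundedness, giving the first half of \eqref{Eq epsilon partial tangential L2 weak}.

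For part (iii): $\vnormeps$ is bounded in $\Hpartial$ by the a-priori estimate (the $\partial_z$-norm directly, the $L^2$-norm via Lemma \ref{Th Trace on One Derivative Space} and the boundary bound on $\vnormaleps$), so a subsequence converges weakly in $\Hpartial$ to some $\xi$; multiplying by $\epsilon$ and using that $\epsilon\,\vnormaleps\to0$ strongly in $L^2$ together with $\epsilon\,\partial_z\vnormeps\to0$ strongly in $L^2$ gives the strong convergence of $\epsilon\,\vnormeps$ in $\Hpartial$. The boundary identities \eqref{boundary conditions on normal velocity} follow from continuity of the trace on $\Hpartial$ (Lemma \ref{Th Trace on One Derivative Space}(i)): the interface admissibility condition $\vepsone\cdot\n=\vepstwo\cdot\n=\vnormaleps$ on $\Gamma$, encoded in the space $\X$, passes to the limit because $\vepsone\cdot\n\rightharpoonup\vone\cdot\n$ weakly in $H^{-1/2}(\Gamma)$ while $\vnormaleps\big\vert_\Gamma\rightharpoonup\xi\big\vert_\Gamma$; similarly the condition $\vnormaleps=0$ on $\Gamma+1$ (built into $\X_2$) survives the limit. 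Part (iv): $\vtwo\cdot\n=0$ follows by applying the local-to-global relation \eqref{Eq local and global velocities} (equivalently \eqref{Eqn Isometric Equalities}) to the strong limit — $\vtwo\cdot\n$ is the limit of $\epsilon\,\vnormaleps$, which is $0$; and $\boldsymbol\chi\cdot\n=\partial_z\xi$ because $\partial_z\vepstwo\cdot\n = \partial_z(\vepstwo\cdot\n)=\partial_z\vnormaleps$ (the normal direction $\n=\n(\xthilde)$ is independent of $z$, cf. Proposition \ref{Th Orthogonal Basis and Spaces Isometry}(i)), and both sides converge weakly in $L^2(\Omega_2)$ to $\boldsymbol\chi\cdot\n$ and $\partial_z\xi$ respectively.

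The main obstacle I expect is bookkeeping rather than depth: one must be careful that the several subsequence extractions are compatible (take one common subsequence at the end), and one must verify that the interface condition $\v^1\cdot\n=\v^2\cdot\n$ on $\Gamma$ — which for $\vepsone$ only lives in $H^{-1/2}(\Gamma)$ — is correctly reconciled with the $L^2(\Gamma)$-trace of $\vnormaleps$; the reconciliation uses that $\vepstwo\in\mathbf{H}^1(\Omega_2)$ so its normal trace agrees with $\gamma(\vepstwo)\cdot\n$ (the remark following Lemma \ref{Th Surjectiveness from Hdiv to H^1/2}), and that weak $\Hdiv$-convergence gives weak $H^{-1/2}$-convergence of normal traces. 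A secondary delicate point is justifying that $\gradt(\epsilon\,\vepstwo)$ is bounded: this relies crucially on the uniform lower bound $\delta>0$ of \eqref{Eq constraint on the normal vector} being irrelevant here but on $\zeta\in C^2(G_0)$ with $\cl(G)\subset G_0$ so that $\gradt\zeta$ is bounded on $G$, which is exactly Hypothesis \ref{Hyp Interface Geometric Conditions}.
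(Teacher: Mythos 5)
Your proposal follows essentially the same route as the paper: extract weak limits from the global a-priori estimate, use the trace bounds on $\Gamma$ together with the Poincar\'e-type inequalities of Lemma \ref{Th Trace on One Derivative Space} to get $\mathbf{L}^{2}(\Omega_{2})$ bounds, recover the $\gradt(\epsilon\,\vepstwo)$ bound from the identity for $\deps$, and pass the structural identities to the limit. The only remarks are cosmetic: you occasionally write the Cartesian component $\vnormeps$ where the normal component $\vnormaleps$ is meant (the needed bound on $\partial_z(\vepstwo\cdot\n)$ comes from the full bound on $\partial_z\vepstwo$, exactly as in the paper), and you supply slightly more detail than the paper on the $H^{-1/2}$ versus $L^{2}(\Gamma)$ trace reconciliation, which the paper dismisses as immediate.
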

\begin{proof}
\begin{enumerate}[(i)]
\item (The proof is identical to part (i) Corollary 11 in \cite{ShowMor17}, we write it here for the sake of completeness.) Due to the global a-priori Estimate \eqref{general a priori
estimate} there must exist a weakly convergent subsequence $\vone\in \Hdiv(\Omega_{1}) $
such that \eqref{velocity omega 1} holds only in the weak
$\mathbf{L}^{2}(\Omega_{1})$-sense. Because of the hypothesis \ref{Hyp
Bounds on the Forcing Terms} and the equation \eqref{Eq Divergence free condition},
the sequence $ (\div\vepsone:\epsilon>0) \subset L^{2}(\Omega_{1})$
is bounded. Then, there must exist yet another subsequence, still
denoted the same, such that \eqref{velocity omega 1} holds in the weak
$\Hdiv(\Omega_{1})$-sense. Now recalling that the divergence operator is linear and continuous with respect to the $ \Hdiv $-norm the identity \eqref{Eq velocity one conservation} follows.
\item From the estimate \eqref{general a
priori estimate} it follows that $ (\partial_{z} \vtaneps: \epsilon> 0) $ and $ (\partial_{z} \vnormeps: \epsilon> 0) $ are bounded in $ \mathbf{L}^{2}(\Omega_{2}) $. Then, there exists a subsequence (still denoted the same) and $ \boldsymbol{\chi} \in \mathbf{L}^{2}(\Omega_{2}) $ such that $ (\partial_{z}\vepstwo: \epsilon > 0 ) $ and $ (\partial_{z}(\epsilon\, \vepstwo): \epsilon > 0 ) $ satisfy the statement \eqref{Eq epsilon partial tangential L2 weak}.
Also from \eqref{general a priori estimate} the trace on the interface $ \big(\epsilon\,\vepstwo\big\vert_{\,\Gamma}:\epsilon>0\big) $ is bounded in $ \mathbf{L}^{2}(\Gamma) $. Applying the inequality \eqref{Ineq Conrol by Trace on One Derivative Space} for vector functions, we conclude that $ \big(\epsilon\,\vepstwo:\epsilon>0\big) $ is bounded in $ \mathbf{L}^{2}(\Omega_{2}) $ and consequently in $ \Hboldpartial $; then there must exist $ \vtwo\in \Hboldpartial $ such that
\begin{align}\label{Eq convergence of velocity two in H partial}
\epsilon\,\vepstwo\rightarrow \vtwo\quad\text{weakly}\quad\text{in}\quad
\Hboldpartial .
\end{align}
Also, from the strong convergence in the statement \eqref{Eq epsilon partial tangential L2 weak}, it follows that $ \vtwo $ is independent from $ z $ i.e., \eqref{solutionconvergence} holds. 
\newline
\newline
Again, from \eqref{general a priori estimate} we know that the sequence $ \big(\epsilon\,\deps\,\vepstwo:\epsilon>0\big) $ is bounded in $ \mathbf{L}^{2}(\Omega_{2}) $, recalling the identity \eqref{Def operator D epsilon} we have that the expression 
\begin{equation*}
\epsilon\,\deps\,\vepstwo = \grad_{T}\left(\epsilon\,\vepstwo\right) + (\epsilon - 1)\partial_{z}\,\vepstwo\grad_{T}^{t}\,\zeta ,
\end{equation*}
is bounded. In the equation above the left hand side and the second summand of the right hand side are bounded in $ \mathbf{L}^{2}(\Omega_{2}) $, then we conclude that the first summand of the right hand side is also bounded. Hence, $ \big(\epsilon\,\grad \vepstwo:\epsilon>0\big) $ is bounded in $ \mathbf{L}^{2}(\Omega_{2}) $ and therefore the sequence $ \big(\epsilon\,\vepstwo:\epsilon>0\big) $ is bounded in $ \mathbf{H}^{1}(\Omega_{2}) $ consequently, the statement \eqref{funcL2weak} holds.

\item Since $ \big(\partial_z\,\vepstwo:\epsilon>0\big)\subset \mathbf{L}^2\left(\Omega_{2}\right) $ is bounded in particular $ \big(\partial_z\,\vepstwo\cdot\n:\epsilon>0\big)\subset L^2\left(\Omega_{2}\right) $ is also bounded. From \eqref{general a priori estimate}, we know that 
$ \big(\vepstwo\cdot\n\,\big\vert_{\Gamma}:\epsilon>0\big)\subset L^2\left(\Gamma\right) $ is bounded and again, due to inequality \eqref{Ineq Conrol by Trace on One Derivative Space} we conclude that $ \big(\vepstwo\cdot\n:\epsilon>0\big)\subset L^2\big(\Omega_{2}\big)$ is bounded. Then, the sequence $ \big(\vepstwo\cdot\n:\epsilon>0\big) $ is bounded in $ \Hpartial $ and there must exist $ \xi\in \Hpartial $ and a subsequence, still denoted the same, such that $ (\vnormaleps: \epsilon > 0) $ and $ (\epsilon \, \vnormaleps: \epsilon > 0) $ satisfy the statement \eqref{convergence of normal velocity}. From here it is immediate to conclude the relations \eqref{boundary conditions on normal velocity}.

\item Since $ (\epsilon \, \vepstwo\cdot\n)\rightarrow 0 $ and due to \eqref{Eq convergence of velocity two in H partial}, we conclude that $ \vtwo\cdot\n = 0 $. Finally, due to \eqref{Stmt Higher Order Velocity Weak Convergence}, we have that $ \boldsymbol{\chi}\cdot\n = \partial_{z}\,\xi $ and the proof is complete.
\qed
\end{enumerate}
\end{proof}%
\begin{figure} 
	%
	\begin{subfigure}[Limit Solutions in the Reference Domain]
			{ \includegraphics[scale = 0.36]{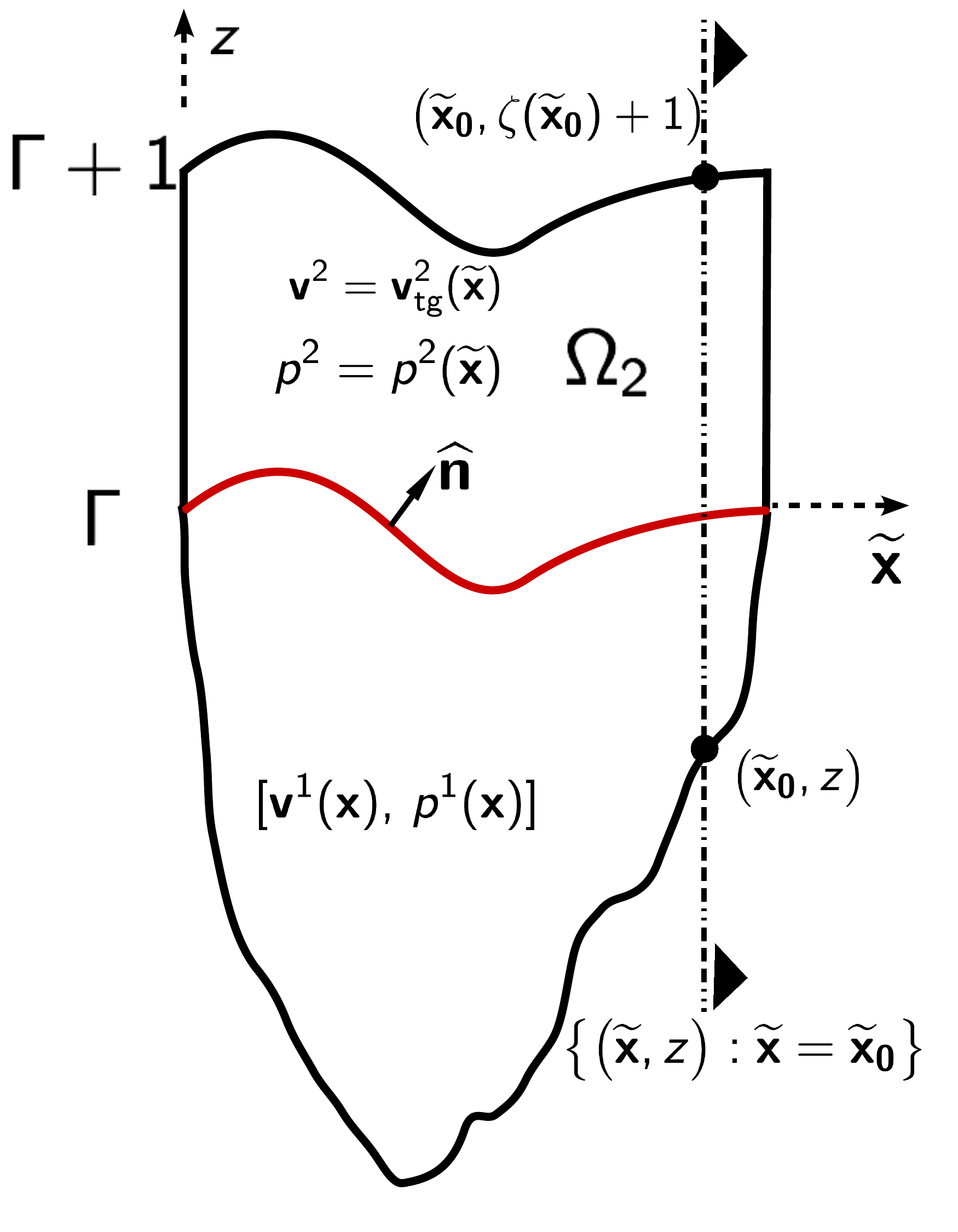} }
	\end{subfigure} 
	~ 
	\begin{subfigure}[Velocity and Pressure Schematic Traces for the solution on the hyperplane
		$ \{(\xthilde, z): \xthilde = \xthilde_{0}\} $.]
			{ \includegraphics[scale = 0.36]{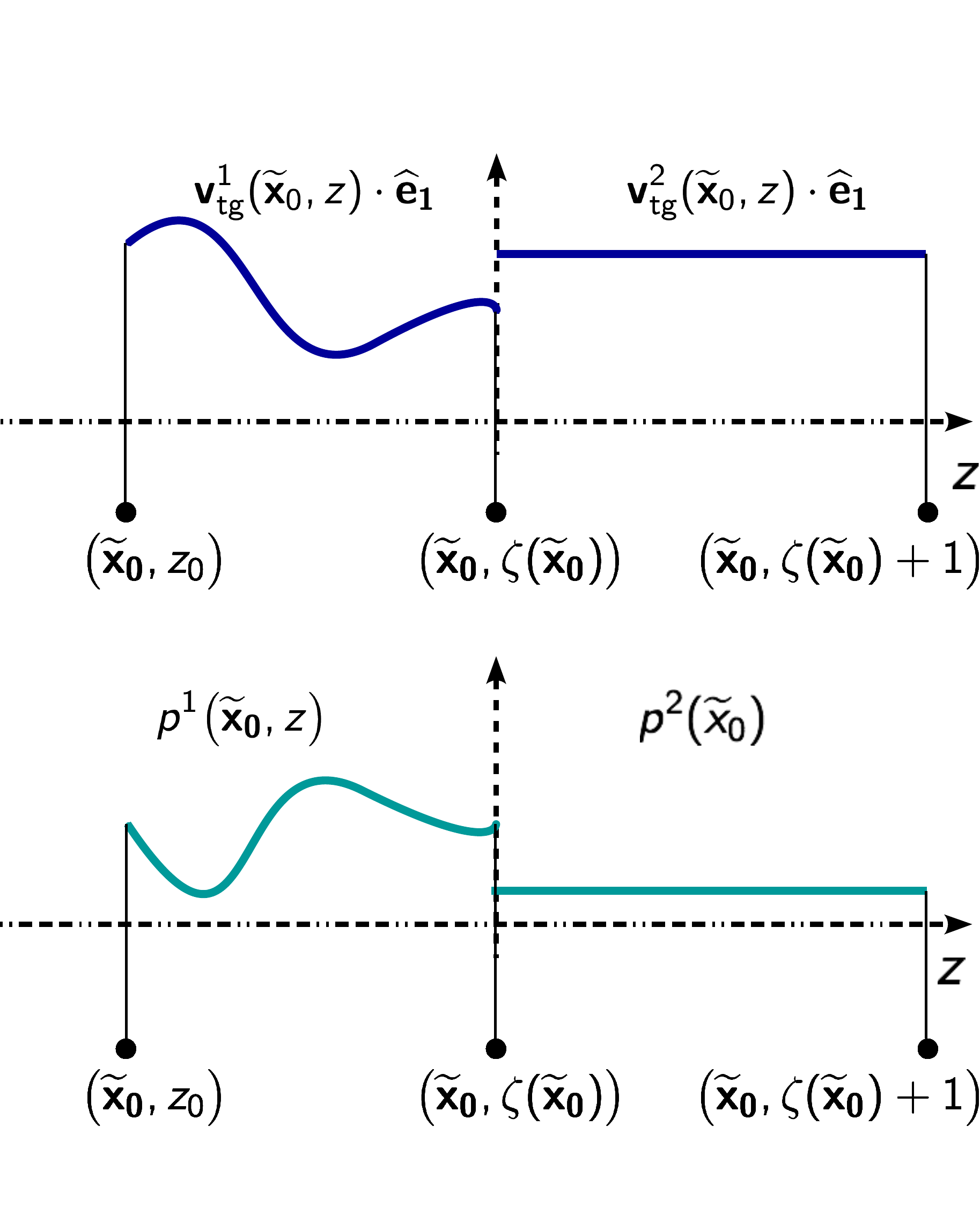} }             
	\end{subfigure} 
	\caption{Figure (a) depicts the dependence of the limit solution $ [\v, p] $, for both regions $ \Omega_{1} $ and $ \Omega_{2} $ and a generic hyperplane $ \{(\xthilde, z): \xthilde = \xthilde_{0}\} $. Figure (b) shows plausible schematics for traces of the velocity and pressure on the hyperplane $ \{(\xthilde, z): \xthilde = \xthilde_{0}\} $, depicted in Figure (a).}
	\label{Fig Sketch Limit}
\end{figure}
\begin{theorem}[Convergence of Pressures]\label{Th Convergence of Pressure One}
Let $ \big[\veps, \peps \big]\in \X\times \Y $ be the solution of problem \eqref{problem fixed geometry}. There exists a subsequence, still
denoted $ (\peps:\epsilon>0) $ verifying the following.
\begin{enumerate}[(i)]
\item There exists $\pone\in H^{1}(\Omega_{1})$ such that
\begin{subequations}
\begin{equation}\label{convergence of the pressure in Omega_1}
\pepsone \rightarrow \pone \text{ weakly in }
H^{1}(\Omega_{1}) \text{ and strongly in }\,
L^{2}(\Omega_{1}), 
\end{equation}
\begin{align}\label{Eq Equations and BC of pone}
& \Q\vone + \grad \pone = 0 \text{ in } \Omega_{1}, &
& \pone = 0 \text { on } \partial\Omega_{1} - \Gamma,
\end{align}
\end{subequations}
with $ \vone $ the weak limit of Statement \eqref{velocity omega 1}.
\item There exists $\ptwo\in L^{2}(\Omega_{2})$ such that
\begin{equation}\label{convergence of the pressure in Omega_2}
\pepstwo \rightarrow \ptwo\quad \text{weakly in }\,  L^{2}(\Omega_{2}) .
\end{equation}
\item The pressure $p = \big[ \pone, \ptwo \big]$ belongs to $L^{2}(\Omega)$.
\end{enumerate}
\end{theorem}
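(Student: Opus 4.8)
The plan is to obtain each of the three assertions as a consequence of the uniform a-priori bound in Theorem~\ref{Th A-priori Estimates of Velocity}, together with the structure of the rescaled weak formulation \eqref{problem fixed geometry} and the already-established velocity convergences of Corollary~\ref{Th Direct Weak Convergence of Velocities}. For part (i), I would first use \eqref{darcy}, i.e. $\grad\pepsone = -\Q\,\vepsone$, to convert the velocity bound $\Vert\vepsone\Vert_{0,\Omega_1}\le K$ into a gradient bound $\Vert\grad\pepsone\Vert_{0,\Omega_1}\le K'$; since $\pepsone = 0$ on $\partial\Omega_1-\Gamma$, Poincar\'e's inequality (exactly as used in \eqref{estimate on the first integral}) upgrades this to a uniform $H^1(\Omega_1)$ bound. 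Hence a subsequence converges weakly in $H^1(\Omega_1)$ to some $\pone$, and by Rellich--Kondrachov strongly in $L^2(\Omega_1)$, giving \eqref{convergence of the pressure in Omega_1}. Passing to the limit in $\grad\pepsone = -\Q\,\vepsone$ (using weak $\mathbf{L}^2$-convergence of $\vepsone$ from \eqref{velocity omega 1}) yields $\grad\pone = -\Q\vone$; the boundary condition $\pone = 0$ on $\partial\Omega_1-\Gamma$ survives because the trace map $H^1(\Omega_1)\to L^2(\partial\Omega_1)$ is weakly continuous. This proves part (i).

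For part (ii), the natural route is the inf-sup (Ne\v{c}as) bound: since $\big(\vepsone,\pepsone\big)$ and the velocity pieces in $\Omega_2$ are all controlled, one shows $\pepstwo$ is bounded in $L^2(\Omega_2)$ by testing the momentum equation \eqref{problem fixed geometry 1} against a suitably chosen $\w\in\X$ realizing a given $L^2(\Omega_2)$ function as (part of) the divergence. Concretely, for $q\in L^2(\Omega_2)$ one builds $\w$ with $\w^1=0$, $\wnorm$ chosen so that $\partial_z\wnorm$ matches $q$ up to the lower-order terms $\divt\wtan$ and $\partial_z\wtan\cdot\gradt\zeta$, with $\Vert\w\Vert_{\X}\lesssim\Vert q\Vert_{0,\Omega_2}$; plugging this into \eqref{problem fixed geometry 1} and estimating every other term by the a-priori bound \eqref{general a priori estimate} and the forcing bound \eqref{Ineq Bounds on the Forcing Terms} gives $\big|\int_{\Omega_2}\pepstwo\,q\big|\le C\Vert q\Vert_{0,\Omega_2}$, hence $\Vert\pepstwo\Vert_{0,\Omega_2}\le C$. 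A further subsequence then converges weakly in $L^2(\Omega_2)$ to some $\ptwo$, which is \eqref{convergence of the pressure in Omega_2}. Part (iii) is then immediate: $\pone\in H^1(\Omega_1)\subset L^2(\Omega_1)$ and $\ptwo\in L^2(\Omega_2)$, so the pair $p=[\pone,\ptwo]$ lies in $L^2(\Omega_1)\times L^2(\Omega_2)=L^2(\Omega)$ (recalling $\Omega=\Omega_1\cup\Gamma\cup\Omega_2$ and $\Gamma$ is a null set).

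I expect the main obstacle to be part (ii): getting the uniform $L^2(\Omega_2)$ bound on $\pepstwo$. The difficulty is twofold. First, the pressure in $\Omega_2$ is not directly controlled by any term appearing on the left of \eqref{general a priori estimate}, so one genuinely needs a Bogovski\u{\i}-type right-inverse of the divergence adapted to the rescaled operators $\divt$, $\partial_z$, and to the boundary conditions defining $\X_2$ (zero on $\partial\Omega_2-(\Gamma+1)$, zero normal component on $\Gamma+1$); care is needed so that the test field's norm does not blow up as $\epsilon\to0$. Second, one must check that the coupling and tangential-stress terms in \eqref{problem fixed geometry 1}, when the test field is this pressure-recovery field, are controlled by quantities already known to be bounded --- in particular the factor $\epsilon^2$ in front of $\E\,\deps\vepstwo\bcol\deps\wtwo$ combines with $\Vert\deps(\epsilon\vepstwo)\Vert_{0,\Omega_2}\le K$ to give a bound of order $\epsilon$, which is harmless, while the $\partial_z$ terms are handled directly by \eqref{general a priori estimate}. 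Parts (i) and (iii) are routine once (ii) is in hand. If a cleaner argument is available, it may be preferable to invoke the abstract inf-sup constant already supplied by the well-posedness Theorem~\ref{Th well posedeness mixed formulation classic} applied on the fixed reference domain, provided one verifies the inf-sup constant is $\epsilon$-independent; but the explicit construction above is the safest path.
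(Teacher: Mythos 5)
Your overall strategy coincides with the paper's: part (i) is proved exactly as you describe (Darcy's law plus the drained boundary condition and Poincar\'e give a uniform $H^{1}(\Omega_{1})$ bound, then weak/strong convergence and passage to the limit in the equation and in the trace), part (iii) is the same triviality, and part (ii) is likewise obtained by testing \eqref{problem fixed geometry 1} against a constructed field whose vertical derivative reproduces the pressure pairing. However, two points in your sketch of part (ii) would fail as written.

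First, you cannot take $\w^{1}=0$. The space $\X$ requires $\w^{1}\cdot\n=\w^{2}\cdot\n$ on $\Gamma$, and the natural test field --- the vertical antiderivative $\varpi(\xthilde,z)=\int_{z}^{\zeta(\xthilde)+1}\phi(\xthilde,t)\,dt$ times $\eversor_{N}$, which vanishes on $\Gamma+1$ and satisfies $\partial_{z}\varpi=-\phi$ --- has a nonzero normal trace on $\Gamma$. The paper therefore lifts this trace into $\Omega_{1}$ via Lemma \ref{Th Surjectiveness from Hdiv to H^1/2}, producing $\wone\in\Hdiv(\Omega_{1})$ with $\Vert\wone\Vert_{\Hdiv(\Omega_{1})}\leq C\Vert\phi\Vert_{0,\Omega_{2}}$, and the extra Darcy and interface terms this generates are absorbed using \eqref{general a priori estimate} and \eqref{Ineq H^1 boundedness of pressure one}. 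Second, the claim $\Vert\w\Vert_{\X}\lesssim\Vert q\Vert_{0,\Omega_{2}}$ cannot hold in the full $\mathbf{H}^{1}(\Omega_{2})$ norm for $q$ merely in $L^{2}$: the tangential gradient of the antiderivative involves $\gradt\phi$ and is not controlled by $\Vert\phi\Vert_{0,\Omega_{2}}$. This is not a removable technicality; it is the reason the paper restricts to $\phi\in C_{0}^{\infty}(\Omega_{2})$, controls only $\Vert\wtwo\Vert_{0,\Omega_{2}}+\Vert\partial_{z}\wtwo\Vert_{0,\Omega_{2}}$ and the traces, observes that the sole term seeing $\gradt\wtwo$ carries a prefactor $\epsilon$ after writing $\epsilon\,\deps\wtwo=\epsilon\big(\gradt\wtwo+\partial_{z}\wtwo\,\gradt^{t}\zeta\big)-\partial_{z}\wtwo\,\gradt^{t}\zeta$, and consequently concludes only $\limsup_{\epsilon\downarrow 0}\big\vert\int_{\Omega_{2}}\pepstwo\,\phi\big\vert\leq C\Vert\phi\Vert_{0,\Omega_{2}}$, which still suffices for the $L^{2}(\Omega_{2})$ bound and the weak compactness. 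You correctly flagged both dangers in your closing paragraph, but the construction you propose does not yet resolve them.
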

\begin{proof}
\begin{enumerate}[(i)]
\item (The proof is identical to part (i) Lemma 12 in \cite{ShowMor17}, we write it here for the sake of completeness.) Due to \eqref{darcy} and \eqref{a priori
estimate} it follows that
\begin{equation*}
\big\Vert \grad \pepsone\big\Vert_{0,\Omega_{1}}=
\big\Vert \sqrt{\Q}\,\vepsone\big\Vert_{0,\Omega_{1}}\leq C ,
\end{equation*}
with $ C > 0 $ an adequate positive constant. From \eqref{BC Drained}, the Poincar\'e inequality implies there exists
a constant $ \widetilde{C}>0 $ satisfying
\begin{align}\label{Ineq H^1 boundedness of pressure one}
& \big\Vert \pepsone\big\Vert_{1,\Omega_{1}}
\leq \widetilde{C} \, \big\Vert \grad\pepsone\big\Vert_{0,\Omega_{1}}, &
& \text{for all }\, \epsilon > 0  .
\end{align}
Therefore, the sequence $ (\pepsone: \epsilon > 0) $ is bounded in
$H^{1}(\Omega_{1})$ and the convergence statement \eqref{convergence of the
pressure in Omega_1} follows directly. Again, given that $ \pepsone $ satisfies the Darcy equation \eqref{darcy} and that the gradient $ \grad  $ is linear and continuous in $ H^{1} (\Omega_{1}) $ the equality $ \Q\vone + \grad \pone = 0 $ in \eqref{Eq Equations and BC of pone} follows. Finally, since $ \pepsone \big\vert_{\Omega_{1} - \Gamma} = 0 $ for every element of the weakly convergent subsequence and the trace map $ \varphi \mapsto \varphi\big\vert_{\Gamma} $ is linear, it follows that $ \pone $ satisfies the boundary condition in \eqref{Eq Equations and BC of pone}.  

\item 
In order to show that the sequence $ (\pepstwo: \epsilon > 0 ) $ is
bounded in $ L^{2}(\Omega_{2}) $, take any $ \phi\in
C_{0}^{\infty}(\Omega_{2}) $ and define the auxiliary function
\begin{align}\label{negative antiderivative}
& \varpi(\xthilde,z)\defining
\int_{z}^{\zeta(\xthilde) + 1}\phi(\xthilde,t)\,dt , &
& \zeta(\xthilde) \leq z \leq \zeta(\xthilde) + 1.
\end{align}
Since $ \zeta\in C^{2}(G) $, it is clear that $ \varpi \in H^{1}(\Omega_{2}) $ and $ \Vert \varpi \Vert_{\,1,\Omega_2}\leq C \Vert \phi \Vert_{0,\Omega_2} $. Hence, the function $ \w^{2}\defining \big(\0_{T},\varpi\big) = \varpi \, \eversor_{N} $ belongs to $ \X_{2} $, moreover
\begin{equation}\label{Eq estimate on the test function 1}
\begin{split}
\Vert\,\wtwo\,\Vert_{0,\Omega_2} +
\Vert\,\partial_{z}\,\wtwo\,\Vert_{0,\Omega_2}
& \leq \widetilde{C} \,  \Vert\,\phi\,\Vert_{0,\Omega_2}, \\
\Vert\wtwo(\tang)\big\vert_{\Gamma}\Vert_{0,\Gamma} =
\Vert\wtang\big\vert_{\Gamma}\Vert_{0,\Gamma}
& \leq \widetilde{C} \,  \Vert\,\phi\,\Vert_{0,\Omega_2} .
\end{split}
\end{equation}
Here, the second inequality follows from the first one and due to the estimate \eqref{Ineq Trace on One Derivative Space}. Next, observe that
$ \varpi \, \eversor_{N}\cdot \n(\cdot)\, \ind_{\Gamma} \in L^{2}(\Gamma)\subseteq
H^{-1/2}(\partial \Omega_{1}) $ then, Lemma \ref{Th Surjectiveness
from Hdiv to H^1/2} gives the existence of a function $ \wone\in
\Hdiv\,(\Omega_{1}) $ such that 
\begin{equation}\label{Eq Normal Trace Match}
\begin{split}
\wone\cdot\n 
=\w^{2}\cdot\n 
=\varpi(\xthilde,\zeta(\xthilde)) \eversor_{N} \cdot \n(\xthilde)
& = \int_{\zeta(\xthilde)}^{\zeta(\xthilde) + 1}\phi\,(\xthilde,t)\,dt  
\text{ on } \Gamma , \\
\wone \cdot \n & = 0  \text{ on } \partial \Omega_{1} - \Gamma, \\
\Vert \wone\Vert_{\,\Hdiv(\Omega_1)}
& \leq \Vert \varpi \, \eversor_{N} \cdot \n (\cdot) \Vert_{0,\Gamma}\leq C
\Vert \phi \Vert_{0,\Omega_2}.
\end{split}
\end{equation}
Here, the last inequality holds because $ \sup\{\n(\xthilde)\cdot \eversor_{N}: \xthilde \in \Gamma \} < \infty $.   
Hence, the function $ \w \defining [\w^{1},\w^{2}] $ belongs to the space $ \X $. Testing \eqref{problem fixed geometry 1} with $ \w $ yields
\begin{multline}\label{relation partial z pressure and normal velocity}
\begin{split}
\int_{\Omega_1} \Q\,\vepsone\cdot\wone 
& - \int_{\Omega_1} \pepsone\,
\div\wone 
+\int_{\Omega_2}\,
\pepstwo\,\phi 
+\epsilon^{2}\int_{\Omega_2}\E\,\deps\,\vepstwo
\bcol\,\deps\,\wtwo 
-\int_{\Omega_2}\E\,\partial_z\,\vnormeps\,\phi 
\\
& +\alpha\int_{\,\Gamma}\big(\vepsone\cdot\n\big)\big(\wone\cdot\n\,\big) d S
+\epsilon^{2} \int_{\,\Gamma}\gamma\,\sqrt{\Q}\,
\vtangeps\cdot\wtang d S
%
=\epsilon\int_{\Omega_2}\f^{2,\epsilon}_{N}\,\varpi .
\end{split}
\end{multline}
Applying the Cauchy-Bunyakowsky-Schwarz inequality to the integrals and reordering we get
\begin{equation*}
\begin{split}
\Big\vert\int_{\Omega_2}\,
\pepstwo\,\phi 
\Big\vert
\leq \, 
& C_1\,\big\Vert\,\vepsone\,\big\Vert_{0, \Omega_1}
\big\Vert\,\wone\,\big\Vert_{0, \Omega_1}
+\big\Vert\,\pepsone\,\big\Vert_{0, \Omega_1}
\big\Vert\,\div\wone\,\big\Vert_{0, \Omega_1}+
C_2\,\big\Vert\,\epsilon\,\deps\,\vepstwo\big\Vert_{0,\Omega_2}
\big\Vert\,\epsilon\,\deps\,\wtwo\,\big\Vert_{0,\Omega_2}
\\
& 
+
C_3\,\big\Vert\,\partial_z\,\vnormeps\,\big\Vert_{0,\Omega_2}
\big\Vert\,\phi\,\big\Vert_{0,\Omega_2}
+C_4\,\big\Vert\,\vepsone\cdot\n\,\big\Vert_{0,\Gamma} \big\Vert\,\wtwo\cdot\n\,\big\Vert_{0,\Gamma}
\\
& 
+C_5\,\epsilon\, \big\Vert\epsilon\,\vtangeps\big\Vert_{0,\Gamma} \big\Vert\wtang\big\Vert_{0,\Gamma}
+ \big\Vert\,\epsilon\,\f^{2,\epsilon}_{N}\,\big\Vert_{0,\Omega_2}
\big\Vert\,\varpi\,\big\Vert_{0,\Omega_2} .
\end{split}
\end{equation*}
We pursue estimates in terms of $ \Vert \phi \Vert_{0,\Omega_{2}} $, to that end we first apply the fact that all the terms involving the solution on the right hand side, i.e. $ \vepsone, \,\pepsone, \,\epsilon\,\deps\,\vepstwo,\, \partial_{z}\,\vnormeps, \, \vepsone\cdot\n $ and 
$ \vtangeps\big\vert_{\Gamma} $ are bounded; in addition, the forcing term $ \epsilon\,\f^{2,\epsilon}_{N} $ is bounded. Replacing these by a generic constant on the right hand side we have
\begin{equation}\label{Eq estimate on pressure two}
\begin{split}
\Big\vert\int_{\Omega_2}\,
\pepstwo\,\phi 
\Big\vert
\leq
& C\Big(\big\Vert\,\wone\,\big\Vert_{0, \Omega_1}
+\big\Vert\,\div\wone\,\big\Vert_{0, \Omega_1}
\\
& 
+
\big\Vert\,\epsilon\,\deps\,\wtwo\,\big\Vert_{0,\Omega_2}+
\big\Vert\,\phi\,\big\Vert_{0,\Omega_2} 
+ \big\Vert\,\wtwo\cdot\n\,\big\Vert_{0,\Gamma}
+ \epsilon\,\big\Vert\wtang\big\Vert_{0,\Gamma}
+ \big\Vert\,\varpi\,\big\Vert_{0,\Omega_2}\Big) .
\end{split}
\end{equation}
In the expression above the first summand of the second line needs further analysis, we have
\begin{equation*}
\begin{split}
\big\Vert\,\epsilon\,\deps\,\wtwo\,\big\Vert_{0,\Omega_2} 
& =
\big\Vert\,\epsilon\,\gradt\,\wtwo
+(\epsilon-1)\,\partial_{z}\,\wtwo\,\gradt^{t}\zeta\,\big\Vert_{0,\Omega_2}\\
& \leq \epsilon\,\big\Vert\,\gradt\,\wtwo
+\partial_{z}\,\wtwo\,\gradt^{t}\zeta\,\big\Vert_{0,\Omega_2}+
\big\Vert\,\partial_{z}\,\wtwo\,\gradt^{t}\zeta\,\big\Vert_{0,\Omega_2}\\
& \leq \epsilon\,\big\Vert\,\gradt\,\wtwo
+\partial_{z}\,\wtwo\,\gradt^{t}\zeta\,\big\Vert_{0,\Omega_2}+
\big\Vert\,\partial_{z}\,\wtwo\big\Vert_{0,\Omega_2}
\big\Vert\gradt^{t}\,\zeta\,\big\Vert_{0,\Omega_2} .
\end{split}
\end{equation*}
Combining \eqref{Eq estimate on the test function 1} with the expression above we conclude
\begin{equation}\label{Eq estimate on twisted tangential gradient}
\big\Vert\,\epsilon\,\deps\,\wtwo\,\big\Vert_{0,\Omega_2} \\
\leq \epsilon\,\big\Vert\,\gradt\,\wtwo
+\partial_{z}\,\wtwo\gradt^{t}\,\zeta\,\big\Vert_{0,\Omega_2}+
C\,\big\Vert \phi\big\Vert_{0,\Omega_2} .
\end{equation}
Back to the equation \eqref{Eq estimate on pressure two}, the two summands on the left hand side of the first line are bounded by a multiple of $\Vert\,\phi\,\Vert_{0,\Omega_{2}}$ due to \eqref{Eq estimate on the test function 1}. The first two summands on the third line are trace terms which are also controlled by a multiple of $\Vert\,\phi\,\Vert_{0,\Omega_{2}}$ due to \eqref{Eq estimate on the test function 1}. The third summand on the third line is trivially controlled by $\Vert\,\phi\,\Vert_{0,\Omega_{2}}$ due to its construction. Combining all these observations with \eqref{Eq estimate on twisted tangential gradient} we get
\begin{equation*}
\Big\vert\int_{\Omega_2}\,
\pepstwo\,\phi 
\Big\vert\\
\leq
\epsilon\,\big\Vert\,\gradt\,\wtwo
+\partial_{z}\,\wtwo \, \gradt^{t}\zeta\,\big\Vert_{0,\Omega_2}+
C\;\big\Vert \, \phi \, \big\Vert_{0,\Omega_2} , 
\end{equation*}
where $ C > 0 $ is a new generic constant. 
The last inequality holds since all the summands in the parenthesis
are bounded due to the estimates \eqref{general a priori estimate},
\eqref{Ineq H^1 boundedness of pressure one} and the Hypothesis
\ref{Hyp Bounds on the Forcing Terms}. Taking upper limit when
$\epsilon\rightarrow 0$, in the previous expression we get
\begin{equation}\label{bound on epsilon pressure in
L2}
\limsup_{\,\epsilon\,\downarrow\,0}\Big\vert\int_{\Omega_2}\,
\pepstwo\,\phi 
\Big\vert\leq C \, \big\Vert \phi\big\Vert_{0,\Omega_2} .
\end{equation}
The above holds for any $ \phi\in C_0^{\infty}(\Omega_{2}) $, then the sequence $ (\pepstwo:\epsilon>0\,)\subset
L^{2}(\Omega_{2})$ is bounded and consequently the convergence statement \eqref{convergence of
the pressure in Omega_2} follows.

\item From the previous part it is clear that the sequence
$ \big([\pepsone, \pepstwo]: \epsilon > 0\big) $ is bounded in
$ L^{2}(\Omega) $, therefore $ p $ also belongs to $ L^{2}\Omega) $ and the proof is complete.
\qed
\end{enumerate}
\end{proof}
\begin{remark}\label{Rem relations of the normal and derivative with respect to the normal}
Notice that the upwards normal vector $ \n $ orthogonal to the surface $ \Gamma $ is given by the expression
\begin{subequations}\label{Eq relations of the normal and derivative with respect to the normal}
\begin{equation}\label{Eq relation gradient surface and normal surface}
\n = \frac{1}{\vert(-\gradt \zeta, 1)\vert} 
\begin{Bmatrix}
-\gradt^{t}\zeta \\
1
\end{Bmatrix}  
\end{equation}
and the normal derivative satisfies
\begin{align}\label{Eq Derivative with respect to the normal}
\vert(-\gradt \zeta, 1)\vert \partial_{\,\n}  = 
\vert(-\gradt \zeta, 1)\vert \frac{\partial }{\partial \n }
= \n \cdot \grad  ,
\quad\text{ on } \Gamma .
\end{align}
%
\end{subequations}
\end{remark}
We use the identities above to identify the dependence of $ \boldsymbol{\chi} $, $ \xi $ and $ \ptwo $, see Figure \ref{Fig Sketch Limit}.
\begin{theorem}\label{Th Dependence of xi and Pressure 2}
Let $ \boldsymbol{\chi} $, $ \xi $ be the higher order limiting terms in Corollary
\ref{Th Direct Weak Convergence of Velocities} (ii) and (iii). Let $ \ptwo $ be the limit
pressure in $\Omega_{2}$ in Lemma \ref{Th Convergence of Pressure One}
(ii). Then 
\begin{subequations}\label{Eq Dependence of xi chi and Pressure 2}
%
\begin{align}  \label{partial xi dependence}
& \partial_z\,\xi = -\vone \cdot \n \big\vert_{\Gamma} ,&
& \xi (\xthilde, z) =  \vone\cdot \n (\xthilde)\big(1 - z  \big) ,
\quad \text{ for } \zeta(\xthilde) \leq z \leq \zeta(\xthilde) + 1 .
\end{align}
%
%
%
\begin{equation}  \label{Eq pressure two dependence}
\ptwo = \ptwo (\xthilde) .
\end{equation}
%
%
\begin{align}\label{Eq chi dependence}
& \boldsymbol{\chi} = \boldsymbol{\chi} (\xthilde) , &
& \boldsymbol{\chi}\cdot \n = -\vone \cdot \n \quad \text{ on } \Gamma .
\end{align}
\end{subequations}
In particular, notice that $ \partial_{z}\xi  =  \partial_{z}\xi(\xthilde) $.
\end{theorem}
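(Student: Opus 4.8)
The plan rests on three ingredients: the divergence equation \eqref{problem fixed geometry 2}, which for each fixed $\epsilon$ holds \emph{pointwise a.e.}\ in $\Omega_{2}$; the momentum equation \eqref{problem fixed geometry 1} tested against test functions aligned with the \emph{local} frame $\{\nuversor_{1},\dots,\nuversor_{N-1},\n\}$; and the convergences of Corollary \ref{Th Direct Weak Convergence of Velocities} and Theorem \ref{Th Convergence of Pressure One}. A preliminary observation is needed: from \eqref{Def operator D epsilon}, \eqref{funcL2weak} and \eqref{Eq epsilon partial tangential L2 weak},
\[
\epsilon\,\deps\vepstwo=\gradt\big(\epsilon\,\vepstwo\big)+(\epsilon-1)\,\partial_{z}\vepstwo\,\gradt^{t}\zeta\;\rightarrow\;\gradt\vtwo-\boldsymbol{\chi}\,\gradt^{t}\zeta\quad\text{weakly in }\mathbf{L}^{2}(\Omega_{2}),
\]
since $\gradt^{t}\zeta$ is a bounded multiplier depending only on $\xthilde$ ($\zeta\in C^{2}(\cl(G))$).

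I would first establish \eqref{partial xi dependence}. Testing \eqref{problem fixed geometry 2} with $\varphi^{1}=0$ and $\varphi^{2}$ arbitrary in $L^{2}(\Omega_{2})$ gives, for every $\epsilon>0$, $\epsilon\,\divt\vtaneps+(\epsilon-1)\,\partial_{z}\vtaneps\cdot\gradt\zeta+\partial_{z}\vnormeps=0$ a.e.\ in $\Omega_{2}$. Passing to the weak limit (with $\epsilon\,\divt\vtaneps=\divt(\epsilon\,\vtaneps)\rightarrow\divt\vtwo_{T}$, $(\epsilon-1)\,\partial_{z}\vtaneps\rightarrow-\boldsymbol{\chi}_{T}$ weakly, $\partial_{z}\vnormeps\rightarrow\boldsymbol{\chi}_{N}$ weakly) yields $\divt\vtwo_{T}-\boldsymbol{\chi}_{T}\cdot\gradt\zeta+\boldsymbol{\chi}_{N}=0$ in $L^{2}(\Omega_{2})$. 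By \eqref{Eq relation gradient surface and normal surface} the last two terms equal $\vert(-\gradt\zeta,1)\vert\,(\boldsymbol{\chi}\cdot\n)$, and $\boldsymbol{\chi}\cdot\n=\partial_{z}\xi$ by Corollary \ref{Th Direct Weak Convergence of Velocities}(iv); hence $\partial_{z}\xi=-\divt\vtwo_{T}/\vert(-\gradt\zeta,1)\vert$, which is independent of $z$ because $\vtwo=\vtwo(\xthilde)$ and $\zeta=\zeta(\xthilde)$. Thus $\xi$ is affine in $z$, and the two traces in \eqref{boundary conditions on normal velocity} pin it down, giving \eqref{partial xi dependence}; in particular $\partial_{z}\xi=-\vone\cdot\n\vert_{\Gamma}$ is itself $z$-independent.

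For \eqref{Eq pressure two dependence} and \eqref{Eq chi dependence} I would test \eqref{problem fixed geometry 1} with $\w=[\0,\phi\,\nuversor_{k}]$ and with $\w=[\0,\phi\,\n]$, for $\phi\in C_{0}^{\infty}(\Omega_{2})$ and $k=1,\dots,N-1$; these lie in $\X$ because the compact support of $\phi$ makes all interface and boundary terms vanish and forces $\wone\cdot\n=\wtwo\cdot\n$ on $\Gamma$ trivially. For $\w=[\0,\phi\,\nuversor_{k}]$ the three pressure terms combine, after \eqref{Eq structure divergence canonical basis}, into a multiple of $(\nuversor_{k})_{T}\cdot\gradt\zeta-(\nuversor_{k})_{N}=-\vert(-\gradt\zeta,1)\vert\,(\nuversor_{k}\cdot\n)=0$, so they drop in the limit; using the weak limit of $\epsilon\,\deps\vepstwo$ above together with $\epsilon\,\deps(\phi\,\nuversor_{k})\rightarrow-\partial_{z}\phi\,\nuversor_{k}\gradt^{t}\zeta$ strongly, and $\partial_{z}\vepstwo\cdot\nuversor_{k}\rightarrow\boldsymbol{\chi}\cdot\nuversor_{k}$ weakly, one is left with $\int_{\Omega_{2}}\mu\,\partial_{z}\phi\,\big[(1+\vert\gradt\zeta\vert^{2})(\boldsymbol{\chi}\cdot\nuversor_{k})-G_{k}(\xthilde)\big]=0$ for all $\phi$, where $G_{k}$ depends only on $\xthilde$; since $\mu$, $\gradt\zeta$ and $G_{k}$ are $z$-independent this forces $\partial_{z}(\boldsymbol{\chi}\cdot\nuversor_{k})=0$. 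Together with $\partial_{z}(\boldsymbol{\chi}\cdot\n)=\partial_{z}^{2}\xi=0$ from the previous step and the fact that $\{\nuversor_{1},\dots,\nuversor_{N-1},\n\}$ is an orthonormal basis at each point of $G$, this gives $\boldsymbol{\chi}=\boldsymbol{\chi}(\xthilde)$, and $\boldsymbol{\chi}\cdot\n=\partial_{z}\xi=-\vone\cdot\n$ on $\Gamma$. Running the same passage to the limit for $\w=[\0,\phi\,\n]$ — now the pressure block contributes $-\int_{\Omega_{2}}\vert(-\gradt\zeta,1)\vert\,\ptwo\,\partial_{z}\phi$, the term $\int_{\Omega_{2}}\mu\,\partial_{z}\vepstwo\cdot\partial_{z}(\phi\,\n)=\int_{\Omega_{2}}\mu\,\partial_{z}\vnormaleps\,\partial_{z}\phi\rightarrow\int_{\Omega_{2}}\mu\,\partial_{z}\xi\,\partial_{z}\phi$, and the $\deps$-term is $z$-independent once $\partial_{z}\xi$ and $\vtwo$ are — one obtains $\int_{\Omega_{2}}\partial_{z}\phi\,\big[\vert(-\gradt\zeta,1)\vert\,\ptwo + (\text{function of }\xthilde)\big]=0$, hence $\partial_{z}\ptwo=0$, i.e.\ $\ptwo=\ptwo(\xthilde)$.

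The step I expect to be the real obstacle is the $z$-independence of $\boldsymbol{\chi}$. In the flat interface case of \cite{ShowMor17} it comes essentially for free, but here one must (i) choose the test directions in the \emph{local} frame so that the pressure contribution either cancels (against $\nuversor_{k}$) or is cleanly isolated (against $\n$), and (ii) keep strict track, inside the product $\epsilon^{2}\!\int_{\Omega_{2}}\mu\,\deps\vepstwo\bcol\deps\w=\int_{\Omega_{2}}\mu\,(\epsilon\,\deps\vepstwo)\bcol(\epsilon\,\deps\w)$, of the fact that the first factor converges only weakly while the second — dominated by its $z$-derivative part — converges strongly, so that the product passes to the limit. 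The curvature enters only through the coefficients $\gradt\zeta$, $\vert(-\gradt\zeta,1)\vert$ and $\nuversor_{k},\n$, all depending solely on $\xthilde$ and bounded, so they survive the weak limits unscathed; this is precisely where the interplay between the Cartesian and the local coordinate systems does the work.
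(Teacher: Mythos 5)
Your proposal is correct, and for assertions \eqref{Eq pressure two dependence} and \eqref{Eq chi dependence} it takes a genuinely different route from the paper. Your treatment of \eqref{partial xi dependence} is essentially the paper's: both test \eqref{problem fixed geometry 2} with $\varphi^{1}=0$, pass to the weak limit, and read off that $\vert(-\gradt\zeta,1)\vert\,\partial_{z}\xi$ differs from $-\divt\vtwo_{T}$ by something $z$-independent, so the two boundary traces \eqref{boundary conditions on normal velocity} determine $\xi$. For the remaining claims, however, the paper does \emph{not} use compactly supported test functions: it builds $z$-antiderivatives $\varpi^{i}(\xthilde,z)=\int_{z}^{\zeta(\xthilde)+1}\phi^{i}(\xthilde,t)\,dt$ of an arbitrary $\Psi\in\big(C_{0}^{\infty}(\Omega_{2})\big)^{N}$, pairs them with a matching $\wone\in\Hdiv(\Omega_{1})$ obtained from Lemma \ref{Th Surjectiveness from Hdiv to H^1/2}, and passes to the limit to obtain the pointwise identity \eqref{Eq higher order term chi identified} in $\mathbf{L}^{2}(\Omega_{2})$, which couples $\boldsymbol{\chi}$, $\ptwo$, $\pone\big\vert_{\Gamma}$, $\alpha\,\vone\cdot\n$ and $\partial_{\,\n}\vtwo$; projecting that identity onto $\n$ gives the $z$-independence of $\ptwo$, and then of $\boldsymbol{\chi}$. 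Your choice of test functions $[\0,\phi\,\nuversor_{k}]$ and $[\0,\phi\,\n]$ with $\phi\in C_{0}^{\infty}(\Omega_{2})$ is legitimate (they lie in $\X$, all interface and boundary integrals vanish, and your bookkeeping of the weak limit of $\epsilon\,\deps\vepstwo$ against the strong limit of $\epsilon\,\deps\wtwo$ is exactly right), and it buys a shorter argument: the pressure block cancels identically against $\nuversor_{k}$ because $(\nuversor_{k})_{T}\cdot\gradt\zeta-(\nuversor_{k})_{N}=-\vert(-\gradt\zeta,1)\vert\,\nuversor_{k}\cdot\n=0$, so you isolate $\partial_{z}(\boldsymbol{\chi}\cdot\nuversor_{k})=0$ component by component without ever invoking the normal-trace extension lemma. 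What you give up is the identity \eqref{Eq higher order term chi identified} itself, which the paper reuses later (in the proof of Theorem \ref{Th Limiting Problem Variational Formulation}, estimate \eqref{Est Continuity Estimate pone}) to get continuous dependence of $\ptwo$ on the data; if you follow your route you would need to re-derive an equivalent identity at that later stage. As a minor point, the affine formula your argument actually produces is $\xi(\xthilde,z)=\vone\cdot\n(\xthilde)\,\big(\zeta(\xthilde)+1-z\big)$, which is what the boundary conditions force; the form $(1-z)$ printed in \eqref{partial xi dependence} agrees with this only when $\zeta\equiv 0$, so the discrepancy is in the paper's statement, not in your proof.
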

\begin{proof}
Take $ \Phi = \left(0,\,\varphi^{2}\right)\in \Y $,  test \eqref{problem fixed geometry 2} and reordering the summands conveniently, we have
\begin{equation*}
\begin{split}
0 & =  \epsilon\int_{\Omega_{2}}\divt\vtaneps\,\varphi^{2} 
+\epsilon\int_{\Omega_{2}}\partial_{z}\vtaneps\cdot\gradt\zeta\,\varphi^{2} 
-\int_{\Omega_{2}}\partial_{z}\vtaneps\cdot\gradt\zeta\,\varphi^{2} 
+
\int_{\Omega_{2}}\partial_{z}\vnormeps\,\varphi^{2} 
\\
& =\int_{\Omega_{2}}\divt\left(\epsilon\,\vtaneps\right)\varphi^{2} 
+\int_{\Omega_{2}}\partial_{z}\big(\epsilon\,\vtaneps\big)\cdot\gradt\zeta\,\varphi^{2}
%
+\int_{\Omega_{2}}\big(\partial_{z}\,\vtaneps, \,\partial_{z}\,\vnormeps\big)\cdot\big(-\gradt\zeta,\,1\big)\,\varphi^{2}
\\
& =\int_{\Omega_{2}}\divt\left(\epsilon\,\vtaneps\right)\varphi^{2} 
+\int_{\Omega_{2}}\partial_{z}\left(\epsilon\,\vtaneps\right)\cdot\gradt\zeta\,\varphi^{2}
%
+\int_{\Omega_{2}}\big\vert\big(-\gradt\zeta,\,1\big)\big\vert\, \partial_{z}\left(\vepstwo\cdot\n\right)\varphi^{2} .
%
\end{split}
\end{equation*}
Letting $ \epsilon\,\downarrow\, 0 $ in the expression above we get
\begin{equation*}
\int_{\Omega_{2}}\divt\vtwo\varphi^{2}\ 
+\int_{\Omega_{2}}\partial_{z}\vtwo\cdot\gradt\zeta\,\varphi^{2} 
+\int_{\Omega_{2}}\big\vert\big(-\gradt\zeta,\,1\big)\big\vert
\, \partial_{z}\,\xi\,\varphi^{2} 
= 0 .
\end{equation*}
Recalling Equation \eqref{solutionconvergence} we have $ \partial_{z}\,\vtwo = 0 $; thus
\begin{equation*}
\int_{\Omega_{2}}\divt\vtwo\, \varphi^{2} 
+\int_{\Omega_{2}}\big\vert\big(-\gradt \zeta,\,1\big)\big\vert\partial_{z}\,\xi\,\varphi^{2}
= 0  .
\end{equation*}
Since the above holds for all $ \varphi^{2}\in L_{\,0}^{2}(\Omega_{2}) $ it follows that
\begin{equation*}
\divt\vtwo+
\vert(-\gradt\zeta,\,1)\vert\partial_{z} \,\xi = c ,
\end{equation*}
where $ c $ is a constant. In the expression above we observe that two out of three terms are independent from $ z $, then it follows that the third summand is also independent from $ z $. Since the vector $ (-\gradt\zeta,\,1) $ is independent from $ z $ we conclude that $ \partial_{z} \xi = \partial_{z} \xi (\xthilde) $ this, together with the boundary conditions \eqref{boundary conditions on normal velocity} yield the second equality in \eqref{partial xi dependence}.
\newline
\newline
Take $ \Psi = \big(\phi^{\,1},\ldots,\,\phi^{\,N}\big)\in \left(C_{\0}^{\infty}(\Omega_{2})\right)^{N} $ and for $ i = 1, 2, \ldots, N $, build the ``antiderivative" $ \varpi^{i} $ of $ \phi^{i} $ using the rule \eqref{negative antiderivative} and define $ \wtwo = \big(\varpi^{1},\ldots,\,\varpi^{\,N}\big) $. Use Lemma \ref{Th Surjectiveness from Hdiv to H^1/2} to construct $ \wone \in \Hdiv(\Omega_{1}) $ such that $ \wone\cdot\n = \wtwo\cdot\n $ on $ \Gamma $, $ \wone\cdot \n = 0 $ on $ \partial \Omega_{1} - \Gamma $ and 
\begin{equation}\label{Eq control of wone in testing for chi}
\big\Vert\wone\big\Vert_{\,\Hdiv(\Omega_{1})}\leq 
C \big\Vert\Psi\big\Vert_{\,\mathbf{L}^{2}(\Omega_{2})}.
\end{equation}
Therefore $ \w \defining \big(\wone,\,\wtwo\big)\in \X_{2} $; test \eqref{problem fixed geometry 1} with $ \w $ and regroup terms of higher order, we have
\begin{equation}\label{Eq passing the limit for chi}
\begin{split}
\int_{\Omega_1}  \Q\, \vepsone \cdot \w^1 
& -
\int_{\Omega_1}\pepsone \, \div\wone
\\
& -  \epsilon \int_{\Omega_2} \pepstwo \, \divt
\wtan 
- \epsilon\int_{\Omega_2} \pepstwo \, \partial_{z}\, \wtan\cdot\gradt\zeta 
%
+\int_{\Omega_2} \pepstwo \, \partial_{z}\, \wtan\cdot\gradt \zeta 
- \int_{\Omega_2} \pepstwo \, \partial_{z} \wnorm 
\\
& +\epsilon^{2}\int_{\Omega_2} \E\,\deps\vepstwo \bcol \deps\wtwo 
+\int_{\Omega_2} \E\,\partial_{z}\,\vepstwo\cdot\partial_{z}\,\wtwo 
\\
& +\alpha
\int_{\,\Gamma}\big(\,\vepsone\cdot\n\,\big)\,\big(\,\wone\,\cdot\n\,\big)\,dS
+
\,\epsilon^{\,2} \int_{\Gamma} \gamma \sqrt{\Q} \,\vtangeps \cdot
\wtang
 \,d S
 %
= \epsilon\,\int_{\Omega_2} {\f^{2,\epsilon}} \cdot
\wtwo . 
\end{split}
\end{equation}
The limit of all the terms in the expression above when $ \epsilon\,\downarrow\,0 $ is clear except for one term, which we discuss independently, i.e.
\begin{equation*}
\begin{split}
\epsilon^{2} \int_{\Omega_2} \E\,\deps\vepstwo  \bcol\deps\wtwo 
%
& = \epsilon^{2}\int_{\Omega_2} \E\,\deps\vepstwo \bcol\Big(\gradt\,\wtwo
+\partial_{z}\,\wtwo\gradt^{t}\zeta 
- \frac{1}{\epsilon}\,\partial_{z}\,\wtwo \gradt^{t}\zeta \Big) 
\\
& = \epsilon^{2}\int_{\Omega_2} \E\,\deps\vepstwo \bcol\big(\gradt\,\wtwo
+\partial_{z}\,\wtwo\gradt^{t}\zeta\big)
%
+\epsilon\int_{\Omega_2} \E\,\deps\vepstwo \bcol \,\partial_{z}\,\wtwo\big(-\gradt^{t}\,\zeta\big) . 
\end{split}
\end{equation*}
In the expression above, the first summand clearly tends to zero when $ \epsilon \downarrow 0 $. Therefore, we focus on the second summand
\begin{equation*}
\begin{split}
\epsilon\int_{\Omega_2} \E\,\deps\vepstwo \bcol \,\partial_{z}\,\wtwo\big(-\gradt^{t}\,\zeta\big) 
 = & \int_{\Omega_2} \E\,\big(\gradt\,\big(\epsilon\,\vepstwo\big)
+\partial_{z}\big(\epsilon\,\vepstwo\big)\gradt^{t}\zeta\big) \bcol \,\partial_{z}\,\wtwo\big(-\gradt^{t}\zeta\big) 
\\
& +\int_{\Omega_2} \E\,\partial_{z}\vepstwo\big(-\gradt^{t}\zeta\big) \bcol \,\partial_{z}\,\wtwo\big(-\gradt^{t}\zeta\big). 
\end{split}
\end{equation*}
All the terms in the left hand side can pass to the limit. Recalling the statement \eqref{Eq epsilon partial tangential L2 weak}, we conclude
\begin{equation*}
\epsilon^{2}\int_{\Omega_2} \E\,\deps\vepstwo \bcol\deps\wtwo 
\rightarrow \int_{\Omega_2} \E\,\gradt\,\vtwo
 \bcol \,\partial_{z}\,\wtwo\big(-\gradt^{t}\zeta\big) 
+\int_{\Omega_2} \E\,\boldsymbol{\chi}\big(-\gradt^{t}\,\zeta\big) \bcol 
\partial_{z}\,\wtwo\big(-\gradt^{t}\zeta\big) .
\end{equation*}
Letting $ \epsilon\downarrow 0 $ in \eqref{Eq passing the limit for chi} and considering the equality above we get
\begin{equation}\label{Eq passing the limit for chi 2}
\begin{split}
\int_{\Omega_1}   \Q\, \vone \cdot \wone 
-
& \int_{\Omega_1}  \pone \, \grad\cdot\wone
+\int_{\Omega_2} \ptwo \, \big(\partial_{z}\, \wtan,\,\partial_{z}\,\wnorm\big)
\cdot\big(\gradt\zeta, \,-1\big) 
+\int_{\Omega_2} \E\,\gradt\,\vtwo
 \bcol \,\partial_{z}\,\wtwo\big(-\gradt^{t}\zeta\big) 
 \\
& +\int_{\Omega_2} \E\,\boldsymbol{\chi}\big(-\gradt^{t}\zeta\big) \bcol 
\partial_{z}\,\wtwo\big(-\gradt^{t}\zeta\big) 
+\int_{\Omega_2} \E\,\boldsymbol{\chi}\cdot\partial_{z}\,\wtwo 
+\alpha
\int_{\,\Gamma}\big(\vone\cdot\n\,\big)\,\big(\wone\,\cdot\n\big)\,dS
= 0.
\end{split}
\end{equation}
We develop a simpler expression for the sum of the fourth, fifth and sixth terms
\begin{equation*}
\begin{split}
\int_{\Omega_2} \E\,\gradt\,\vtwo
 \bcol\,\partial_{z}\,\wtwo& \big(-\gradt^{t}\,\zeta\big) 
+\int_{\Omega_2} \E\,\boldsymbol{\chi}\big(-\gradt^{t}\,\zeta\big) \bcol 
\partial_{z}\,\wtwo\big(-\gradt^{t}\zeta\big) 
+\int_{\Omega_2} \E\,\boldsymbol{\chi}\cdot\partial_{z}\,\wtwo 
\\
& =\int_{\Omega_2} \E\,\big(\grad\vtwo\cdot(-\gradt\zeta,\,1)\big)
\cdot\partial_{z}\,\wtwo 
+\int_{\Omega_2} \E\,\boldsymbol{\chi}\, (-\gradt\zeta,\,1)^{t} \bcol \,\partial_{z}\,\wtwo(-\gradt\,\zeta,\,1)^{t} 
\\
& =\int_{\Omega_2} \E\vert(-\gradt\zeta,\,1)\vert\, \partial_{\,\n} \,\vtwo
\cdot\partial_{z}\,\wtwo 
+\int_{\Omega_2} \E\,\boldsymbol{\chi}\cdot\partial_{z}\wtwo\,(-\gradt\zeta,\,1)\cdot(-\gradt\zeta,\,1) .
\end{split}
\end{equation*}
Here $ \partial_{\,\n} $ is the normal derivative defined in the identity \eqref{Eq Derivative with respect to the normal}. We introduce the equality above in \eqref{Eq passing the limit for chi 2}, this yields
\begin{multline}\label{Eq basic limit in normal direction}
\int_{\Omega_1}  \Q\, \vone \cdot \wone 
-
\int_{\Omega_1}\pone \, \grad\cdot\wone
%
-\int_{\Omega_2} \vert(-\gradt\zeta, \,1)\vert\ptwo \,
 \partial_{z}\big(\wtwo\cdot\n\big) 
%
+\int_{\Omega_2} \E \, \vert(-\gradt\zeta,\,1)\vert\partial_{\,\n}\vtwo
\cdot\partial_{z}\,\wtwo 
\\
+\int_{\Omega_2} \E\,\boldsymbol{\chi}\cdot\partial_{z}\,\wtwo\big\vert\big(-\gradt\zeta,\,1\big)\big\vert^{2} 
+\alpha
\int_{\Gamma}\big(\vone\cdot\n\big)\,\big(\wone\,\cdot\n\big)\,d S
= 0 .
\end{multline}
%
%
Next, we integrate by parts the second summand in the first line, add it to the first summand and recall that $ \partial_{z}\wtwo = -\Psi $ by construction, thus
\begin{multline}\label{Eq chi limiting expression}
-\int_{\Gamma}\pone\, \big(\wone\cdot\n\big)\,d S
+\int_{\Omega_2} \vert(-\gradt\zeta, \,1)\vert\ptwo \; \n\cdot\Psi 
\\
-\int_{\Omega_2} \E\vert(-\gradt\zeta,\,1)\vert\partial_{\,\n}\vtwo
\cdot\Psi 
-\int_{\Omega_2} \E\,\boldsymbol{\chi}\cdot\Psi \, \vert(-\gradt\zeta,\,1)\vert^{2} 
%
+\alpha
\int_{\Gamma}\big(\vone\cdot\n\big)\,\big(\wone\,\cdot\n\big)\,dS
= 0 .
\end{multline}
In the expression above we develop the surface integrals as integrals over the projection $ G $ of $ \Gamma $ on $ \R^{N-1} $, we get
\begin{equation*}
-\int_{\Gamma}\pone\, \big(\wone\cdot\n\big)\,d S
 +\alpha
\int_{\Gamma}\big(\vone\cdot\n\big)\,\big(\wone\cdot\n\big)\,dS\\
=  \int_{G}\frac{1}{\n\cdot\eversor_{N}}\,\big[-\pone\big\vert_{\Gamma}
+\alpha\,\big(\vone\cdot\n\,\big\vert_{\Gamma}\big)\big]
\big(\wone\cdot\n\,\big\vert_{\Gamma}\big) \,d \xthilde.
\end{equation*}
Recalling that $ \displaystyle \wone\cdot\n = \int_{\zeta(\xthilde)}^{\zeta(\xthilde)+1}\Psi(\xthilde, z)\,dz\cdot\n $ on $ \Gamma $, the equality above transforms in
\begin{equation*}
\begin{split}
\int_{G}\frac{1}{\n\cdot\eversor_{N}}\,
\big[-\pone\big\vert_{\Gamma}
+\alpha\,\big(\vone\cdot\n\,\big\vert_{\Gamma}\big)\big]\,
& \int_{\,\zeta(\xthilde)}^{\zeta(\xthilde)+1}\Psi(\xthilde, z)\,dz\cdot\n(\xthilde) \,d \xthilde\\
& =  \int_{G}\int_{\,\zeta(\xthilde)}^{\zeta(\xthilde)+1}\frac{1}{\n\cdot\eversor_{N}}\,
\big[-\pone\big\vert_{\Gamma}
+\alpha\,\big(\vone\cdot\n\,\big\vert_{\Gamma}\big)\big](\xthilde)\, \n(\xthilde)\cdot
\Psi(\xthilde, z)\,dz \,d \xthilde\\
& = \int_{\Omega_{2}}\frac{1}{\n\cdot\eversor_{N}}
\,\big[-\pone\big\vert_{\Gamma}
+\alpha\,\big(\vone\cdot\n\,\big\vert_{\Gamma}\big)\big]\, \n\cdot\Psi \,dz \,d \xthilde .
\end{split}
\end{equation*}
Introducing the latter in \eqref{Eq chi limiting expression} we have
\begin{multline*}
\int_{\Omega_{2}}\frac{1}{\n\cdot\eversor_{N}}\,
\big[-\pone\big\vert_{\Gamma}
+\alpha\,\big(\vone\cdot\n\,\big\vert_{\Gamma}\big)\big]\n\cdot\Psi 
\,dz \,d \xthilde
+\int_{\Omega_2} \vert(-\gradt\zeta, \,1)\vert\ptwo \, \n\cdot\Psi \,d\xthilde\,dz \\
-\int_{\Omega_2} \E\,\vert(-\gradt\zeta,\,1)\vert\partial_{\,\n}\vtwo
\cdot\Psi\,d\xthilde \,dz 
-\int_{\Omega_2} \E\,\big\vert\big(-\gradt\zeta,\,1\big)\big\vert^{2} \, \boldsymbol{\chi}\cdot\Psi \,d\xthilde \,dz
= 0 .
\end{multline*}
Since the above holds for all $ \Psi\in \big(C_{0}^{\infty}(\Omega_{2})\big)^{N} $, we conclude
\begin{multline}\label{Eq higher order term chi identified}
\frac{1}{\n\cdot\eversor_{N}}\,
\big[-\pone\big\vert_{\Gamma}
+\alpha\,\big(\vone\cdot\n\,\big\vert_{\Gamma}\big)\big]\n
+ \vert(-\gradt\zeta, \,1)\vert\ptwo \, \n\\
-\E\,\vert(-\gradt\zeta,\,1)\vert\partial_{\,\n}\vtwo
- \E\,\vert(-\gradt \zeta,\,1)\vert^{2} \, \boldsymbol{\chi}
= 0\,,\quad\text{in}\;\mathbf{L}^{2}(\Omega_{2}) .
\end{multline}
%
%
In order to get the normal balance on the interface we could repeat the previous strategy but using $ \Psi\in C_{\,0}^{\,\infty}(\Omega_{2})^{N} $ such that $ \Psi = \big(\Psi\cdot\n\big)\, \n $, i.e. such that it is arallel to the normal direction. This would be equivalent to replace $ \Psi $ by $ \big(\Psi\cdot\n\big)\,\n $ in all the previous equations. Consequently, in order to get the normal balance, it suffices to apply $ \big(\cdot\dfrac{\n}{\vert(-\gradt \zeta,\,1)\vert^{2}} \big) $ to Equation \eqref{Eq higher order term chi identified}; such operation yields:
\begin{equation}\label{Eq balance of normal stress}
\frac{1}{\n\cdot\eversor_{N}}\,\frac{1}{\vert(-\gradt \zeta,\,1)\vert^{2}}
\,\big[-\pone\big\vert_{\Gamma}+\alpha\,\big(\vone\cdot\n\,\big\vert_{\Gamma}\big)\big]\\
+ \,\frac{1}{\vert(-\gradt \zeta,\,1)\vert}\,\ptwo \, 
-\E\,\frac{1}{\vert(-\gradt \zeta,\,1 ) \vert}\,\partial_{\,\n}\vtwo\cdot\n
- \E\,\partial_{z}\,\xi
= 0 .
\end{equation}
In the expression above the identity \eqref{Eq vtwo tangential and higher order terms relationship} has been used. Also notice that all the terms are independent from $z$, then the equation \eqref{Eq pressure two dependence} follows. Consequently, all the terms but the last in \eqref{Eq higher order term chi identified} are independent of $z$, therefore we conclude $ \boldsymbol{\chi} $ is independent from $ z $. Recalling \eqref{Eq vtwo tangential and higher order terms relationship} and \eqref{partial xi dependence} the second equality in 
\eqref{Eq chi dependence} follows and the proof is complete.
%
%
\qed
\end{proof}
\section{The Limiting Problem}\label{Sec Limiting Problem}
%
%
%
%
\noindent In this section we derive the form of the limiting problem and characterize it as a Darcy-Brinkman coupled system, where the Brinkman equation takes place in a $ N-1 $-dimensional manifold of $ \R^{N} $. First, we need to introduce some extra hypotheses to complete the analysis.
\begin{hypothesis}\label{Hyp Strong Convergence on the Forcing Terms}
In the following, it will be assumed that the sequence of forcing terms $ \big(\f^{2,\epsilon}: \epsilon > 0\big)\subseteq \mathbf{L}^{2}(\Omega_{2}) $ and $ \big(h^{1,\epsilon}: \epsilon > 0\big) \subseteq L^{2}(\Omega_{1}) $ are weakly convergent i.e., there exist $ \f^{2} \in \mathbf{L}^{2}(\Omega_{2}) $ and $ h^{1}\in L^{2}(\Omega_{1}) $ such that
%
\begin{align}\label{Ineq Strong Convergence on the Forcing Terms}
& \f^{2, \epsilon} \rightharpoonup \f^{2},&
&  h^{1, \epsilon} \rightharpoonup  h^{1}\, .
\end{align}
\end{hypothesis}
%
%
%
%
%
%
\subsection{The Tangential Behavior of the Limiting Problem}
%
%
%
%
\noindent Recalling
\eqref{solutionconvergence} and \eqref{Eq vtwo tangential and higher order terms relationship} clearly the lower order limiting velocity has the structure
\begin{equation}\label{Eq structure of tangential velocity}
\begin{Bmatrix}
\vtang\\
0
\end{Bmatrix} =
\begin{Bmatrix}
\vtang\pxthilde\\
0
\end{Bmatrix}.
\end{equation}
The above motivates the following definition.
\begin{definition}\label{Def lower order velocity space}
Let $ \xthilde \mapsto  U(\xthilde) $ be the matrix map introduced in Definition \ref{Def Local System of Coordinates}. Define the space $ \X_{\tang}\subseteq\X_2 $ by
%
%
%
\begin{equation} \label{Def space of tangential velocity 2}
\X_{\tang}\defining
\bigg\{\wtwo\in \X_2:\wtwo =
U(\xthilde)
\begin{Bmatrix}
\wtang\pxthilde\\
0
\end{Bmatrix}
\bigg\},
\end{equation}
%
endowed with the $ H^{1}(\Omega_{2}) $-norm.
\end{definition}
We have the following result
\begin{lemma}
The space $ \X_{\tang}\subset \X_{2} $ is closed.
\end{lemma}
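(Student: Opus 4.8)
The plan is to show that $\X_{\tang}$ is closed in $\X_2$ by taking a sequence $(\wtwo_k)\subset\X_{\tang}$ converging to some $\wtwo$ in the $\mathbf{H}^1(\Omega_2)$-norm and proving that the limit still has the structural form $\wtwo = U(\xthilde)\,(\wtang\pxthilde,\,0)^t$ required by the definition. First I would write each $\wtwo_k$ in local coordinates: by construction $\wtwo_k = U(\xthilde)(\wtwo_{k,\tang},\,0)^t$, equivalently $\wtwo_{k,\n} = \wtwo_k\cdot\n\pxthilde = 0$ on $\Omega_2$, where $\wtwo_{k,\n}$ is defined as in \eqref{Eq definition of normal velocity}. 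So the defining condition of $\X_{\tang}$ is simply the single scalar constraint $\wtwo\cdot\n\pxthilde = 0$ a.e.\ in $\Omega_2$.

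The key step is that the map $\wtwo\mapsto \wtwo\cdot\n(\cdot)$ is continuous from $\mathbf{H}^1(\Omega_2)$ (indeed from $\mathbf{L}^2(\Omega_2)$) into $L^2(\Omega_2)$. This follows because $\xthilde\mapsto\n\pxthilde$ is $C^1$ on $G$ and bounded (Remark \ref{Rem Stream Line Localizer} and the fact that $\zeta\in C^2(G_0)$ with $\cl(G)\subset G_0$), hence multiplication by the components of $\n$ is a bounded operator on $L^2(\Omega_2)$. Therefore $\wtwo_k\to\wtwo$ in $\mathbf{L}^2(\Omega_2)$ implies $\wtwo_k\cdot\n(\cdot)\to\wtwo\cdot\n(\cdot)$ in $L^2(\Omega_2)$; since each $\wtwo_k\cdot\n(\cdot) = 0$, the limit satisfies $\wtwo\cdot\n(\cdot) = 0$ a.e. Equivalently, using the orthogonal matrix $U(\xthilde)$ and the isometric identities \eqref{Eqn Isometric Equalities}, one recovers $\wtwo = U(\xthilde)(\wtang\pxthilde,\,0)^t$ with $\wtang\in[H^1(\Omega_2)]^{N-1}$ by Proposition \ref{Th Orthogonal Basis and Spaces Isometry} and Remark on smoothness of $U$. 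It also remains in $\X_2$ since $\X_2$ is itself closed in $\mathbf{H}^1(\Omega_2)$ (the boundary conditions defining $\X_2^\epsilon$, and hence $\X_2$, are preserved under $H^1$-convergence because the trace map is continuous). Hence $\wtwo\in\X_{\tang}$, proving closedness.

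I do not expect a serious obstacle here; the only point requiring a small amount of care is justifying that multiplication by $\n\pxthilde$ is bounded on $L^2(\Omega_2)$, which reduces to the uniform bound $\sup_{\xthilde\in G}|\n\pxthilde|$ guaranteed by $\zeta\in C^2(G_0)$ and $\cl(G)\subset G_0$ (equivalently, by \eqref{Eq constraint on the normal vector}, $\n\cdot\eversor_N\geq\delta>0$ keeps the normal well-behaved). One could alternatively phrase the whole argument as: $\X_{\tang}$ is the kernel of the bounded linear functional-valued map $\wtwo\mapsto\wtwo\cdot\n(\cdot)$ intersected with the closed space $\X_2$, hence closed as an intersection of a closed set with a closed subspace.
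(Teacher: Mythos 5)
Your argument has a genuine gap: you have mischaracterized the membership condition for $ \X_{\tang} $. The definition \eqref{Def space of tangential velocity 2} requires not only that the normal component vanish, $ \wtwo\cdot\n\pxthilde=0 $, but also that the tangential part be independent of $ z $, i.e.\ $ \wtang=\wtang\pxthilde $ --- note the argument $ \pxthilde $ in the definition and the motivating display \eqref{Eq structure of tangential velocity}, which is driven by \eqref{solutionconvergence}. This second condition is essential downstream: the proof of Lemma \ref{Th Tangential Behavior of the Limit Problem} explicitly invokes $ \partial_{z}\wtwo=0 $ for test functions in $ \X_{\tang} $. The set you actually prove to be closed, $ \{\wtwo\in\X_2:\wtwo\cdot\n=0 \text{ a.e.}\} $, is strictly larger than $ \X_{\tang} $ (any $ H^1 $ field with zero normal component but oscillating in $ z $ belongs to it), so your proof as written does not establish the lemma.

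The missing condition is, however, also a closed one, and the repair is one line in the spirit of your own argument: for $ \wtwo\in\X_{\tang} $ one has $ \partial_{z}\wtwo = 0 $ because $ U\pxthilde $ does not depend on $ z $ (Proposition \ref{Th Orthogonal Basis and Spaces Isometry}) and $ \wtang $ depends only on $ \xthilde $; since $ \partial_{z}:\mathbf{H}^{1}(\Omega_{2})\rightarrow\mathbf{L}^{2}(\Omega_{2}) $ is continuous, the condition $ \partial_{z}\wtwo=0 $ passes to the $ H^{1} $-limit. Combining this with your (correct) observation that $ \wtwo\mapsto\wtwo\cdot\n(\cdot) $ is bounded on $ \mathbf{L}^{2}(\Omega_{2}) $ --- hence has closed kernel --- exhibits $ \X_{\tang} $ as the intersection of the closed space $ \X_{2} $ with two closed kernels, which is a clean version of what the paper does. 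The paper instead undoes the rotation by $ U^{t} $, passes to the limit in the local coordinates, and reads off from the limiting expression both the vanishing of the last component and the $ z $-independence of $ \wtang $; once completed, your route is essentially equivalent, and arguably more transparent, but as submitted it proves closedness of the wrong set.
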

\begin{proof}
Let $ \big(\wtwo\left(\ell\right):\ell\in \N\big)\subset \X_{\tang} $ and $ \wtwo\in \X_2 $ be such that $ \big\Vert\wtwo\big(\ell\big) - \wtwo\big\Vert_{1, \Omega_2}\,\rightarrow 0 $. We must show that $ \wtwo\in \X_{\tang} $. First notice that the convergence in $ \X_2 $ implies $ \big\Vert\wtwo\big(\ell\big) - \wtwo\big\Vert_{0,\Omega_2}\,\rightarrow 0 $. Recalling \eqref{Eq local and global velocities} and the fact that $ U(\xthilde) $ is orthogonal, we have
\begin{equation*}
\begin{bmatrix}
\UTtang  & \UTnormal \\
\UNtang  & \UNnormal
\end{bmatrix}^{t}\pxthilde
\begin{Bmatrix}
\wtan\left(\ell\right)\\[3pt]
\wnorm\left(\ell\right)
\end{Bmatrix}\pxthilde
=
\begin{Bmatrix}
\wtang\left(\ell\right)\\[3pt]
0
\end{Bmatrix}\pxthilde .
\end{equation*}
In the identity above, we notice that $ \wtan\big(\ell\big), \wtan\big(\ell\big) $ are convergent in the $ H^1 $-norm and the orthonormal matrix $ U $, has differentiability and boundedness properties. Therefore, we conclude that $ \wtang\big(\ell\big) $ is convergent in the $ H_1 $-norm, we denote the limit by $ \wtang =  \wtang\big(\xthilde, z\big) $. Now take the limit in the expression above in the $ L^2 $-sense; there are no derivatives involved, then we have
\begin{equation*}
\begin{bmatrix}
\UTtang  & \UTnormal \\
\UNtang  & \UNnormal
\end{bmatrix}^{t}\pxthilde
\begin{Bmatrix}
\wtan\\[3pt]
\wnorm
\end{Bmatrix}\pxthilde
=
\begin{Bmatrix}
\wtang\big(\xthilde, z\big)\\[3pt]
0
\end{Bmatrix} .
\end{equation*}
Observe that the latter expression implicitly states that $ \wtang = \wtang\pxthilde $. Finally, applying the inverse matrix again we have
\begin{equation*}
\wtwo = 
\begin{Bmatrix}
\wtan\\[3pt]
\wnorm
\end{Bmatrix}\pxthilde
=
\begin{bmatrix}
\UTtang  & \UTnormal \\
\UNtang  & \UNnormal
\end{bmatrix}\pxthilde
\begin{Bmatrix}
\wtang\\[3pt]
0
\end{Bmatrix}(\xthilde),
\end{equation*}
where the equality is in the $ L^2 $-sense. But we know that $ \wtang\in \big[H^1\big(\Omega_2\big)\big]^{N-1} $, therefore the equality holds in the $ H^1 $-sense too, i.e. $ \X_{\tang} $ is closed as desired.
\qed
\end{proof}
Next, we use space $ \X_{\tang} $ to determine the limiting problem in the tangential direction.
\begin{lemma}[Limiting Tangential Behavior's Variational Statement]
\label{Th Tangential Behavior of the Limit Problem}
Let $ \vtwo $ 
be the limit found in Theorem \ref{Th Direct Weak Convergence of Velocities} (ii). Then the following weak variational statement is satisfied
\begin{equation}  \label{Eq tangential limit problem with chi}
-   \int_{\Omega_2} \ptwo \, \divt
\wtan 
+ \int_{\Omega_2} \E\,\gradt\vtwo \bcol\gradt\wtwo 
%
%
+ \int_{\Gamma} \beta \sqrt{\Q} \,\vtang \cdot
\wtang
 \,d S 
= \int_{\Omega_2} {\mathbf f_{\tang}^{\,2}} \cdot
\wtang  
\quad \text{ for all } \wtwo \in \X_{\tang} .
\end{equation}
\end{lemma}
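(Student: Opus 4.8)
The plan is to test the variational equation \eqref{problem fixed geometry 1} with a velocity field supported in the channel and lying in $\X_{\tang}$, to use the rigid structure of that space to cancel almost every term, to divide by $\epsilon$, and finally to pass to the limit $\epsilon\downarrow 0$ with the convergences of Corollary \ref{Th Direct Weak Convergence of Velocities} and Theorem \ref{Th Convergence of Pressure One}.

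First I would fix $\wtwo\in\X_{\tang}$ and take the test pair $[\w,\Phi]=\big[(\0,\wtwo),0\big]$. This is admissible: by \eqref{Def space of tangential velocity 2} the normal component of $\wtwo$ in the local basis vanishes, hence $\wtwo\cdot\n=0$ on $\Gamma$ and $(\0,\wtwo)\in\X$, so the two integrals over $\Omega_{1}$ and the $\alpha$-interface integral in \eqref{problem fixed geometry 1} disappear. Again by \eqref{Def space of tangential velocity 2} we have $\wtwo=\wtwo(\xthilde)$, whence $\partial_{z}\wtwo=\0$ (and in particular $\partial_{z}\wtan=\0$, $\partial_{z}\wnorm=0$); this kills the term $\epsilon(1-\tfrac{1}{\epsilon})\int_{\Omega_{2}}\pepstwo\,\partial_{z}\wtan\cdot\gradt\zeta$, the term $\int_{\Omega_{2}}\pepstwo\,\partial_{z}\wnorm$, the terms $\int_{\Omega_{2}}\E\,\partial_{z}\vtaneps\cdot\partial_{z}\wtan$ and $\int_{\Omega_{2}}\E\,\partial_{z}\vnormeps\cdot\partial_{z}\wnorm$, and reduces $\deps\wtwo$ to $\gradt\wtwo$. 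Dividing the surviving identity by $\epsilon$ leaves
\[
-\int_{\Omega_{2}}\pepstwo\,\divt\wtan
+\int_{\Omega_{2}}\E\,\big(\epsilon\,\deps\vepstwo\big)\bcol\gradt\wtwo
+\int_{\Gamma}\beta\sqrt{\Q}\,\big(\epsilon\,\vtangeps\big)\cdot\wtang\,dS
=\int_{\Omega_{2}}\f^{2,\epsilon}\cdot\wtwo .
\]

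Then I would pass to the limit $\epsilon\downarrow 0$ in each term. The pressure term converges to $-\int_{\Omega_{2}}\ptwo\,\divt\wtan$ since $\pepstwo\rightharpoonup\ptwo$ weakly in $L^{2}(\Omega_{2})$ by \eqref{convergence of the pressure in Omega_2} against the fixed function $\divt\wtan$. The interface term tends to $\int_{\Gamma}\beta\sqrt{\Q}\,\vtang\cdot\wtang\,dS$ because the weak $\mathbf{H}^{1}(\Omega_{2})$ convergence $\epsilon\,\vepstwo\rightharpoonup\vtwo$ of \eqref{funcL2weak} together with continuity of the trace gives $\epsilon\,\vtangeps\rightharpoonup\vtang$ weakly in $\mathbf{L}^{2}(\Gamma)$ (recall $\vtwo=\vtwo(\xthilde)$, so its trace on $\Gamma$ is itself), while $\beta\sqrt{\Q}$ is bounded and $\wtang\big\vert_{\Gamma}\in\mathbf{L}^{2}(\Gamma)$. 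The right-hand side converges to $\int_{\Omega_{2}}\f^{2}\cdot\wtwo$ by the weak convergence $\f^{2,\epsilon}\rightharpoonup\f^{2}$ of Hypothesis \ref{Hyp Strong Convergence on the Forcing Terms}, and, since $\wtwo\cdot\n=0$, the isometric identities \eqref{Eqn Isometric Equalities} rewrite it as $\int_{\Omega_{2}}\f_{\tang}^{2}\cdot\wtang$.

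The step I expect to be the main obstacle is the viscous term $\int_{\Omega_{2}}\E\,(\epsilon\,\deps\vepstwo)\bcol\gradt\wtwo$, because a direct passage would require the limit of a product of two merely weakly convergent factors. The way out is to expand, using the definition \eqref{Def operator D epsilon},
\[
\epsilon\,\deps\vepstwo=\gradt\big(\epsilon\,\vepstwo\big)+(\epsilon-1)\,\partial_{z}\vepstwo\,\gradt^{t}\zeta ,
\]
and to notice that $\gradt\wtwo$ is a fixed test factor, while the offending $z$-derivative appears only through $(\epsilon-1)\,\partial_{z}\vepstwo=\partial_{z}(\epsilon\,\vepstwo)-\partial_{z}\vepstwo$, where $\partial_{z}(\epsilon\,\vepstwo)\to\0$ \emph{strongly} in $\mathbf{L}^{2}(\Omega_{2})$ and $\partial_{z}\vepstwo\rightharpoonup\boldsymbol{\chi}$ weakly, by \eqref{Eq epsilon partial tangential L2 weak}; combined with $\gradt(\epsilon\,\vepstwo)\rightharpoonup\gradt\vtwo$ this yields $\epsilon\,\deps\vepstwo\rightharpoonup\gradt\vtwo-\boldsymbol{\chi}\,\gradt^{t}\zeta$ weakly in $\mathbf{L}^{2}(\Omega_{2})$, so the viscous term converges to $\int_{\Omega_{2}}\E\big(\gradt\vtwo-\boldsymbol{\chi}\,\gradt^{t}\zeta\big)\bcol\gradt\wtwo$. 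Gathering the four limits produces the variational identity \eqref{Eq tangential limit problem with chi} (the $\boldsymbol{\chi}$-dependent piece of the viscous contribution being the one its label records), and since $\wtwo\in\X_{\tang}$ was arbitrary the lemma follows.
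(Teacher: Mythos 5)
Your proposal follows the paper's route almost verbatim up to the last step: you test \eqref{problem fixed geometry 1} with $[(\0,\wtwo),0]$ for $\wtwo\in\X_{\tang}$, use $\partial_{z}\wtwo=\0$ and $\wtwo\cdot\n=0$ to remove the $\Omega_{1}$-terms, the interface terms and the $z$-derivative terms, divide by $\epsilon$, and pass to the limit term by term via \eqref{convergence of the pressure in Omega_2}, \eqref{Eq epsilon partial tangential L2 weak}, \eqref{funcL2weak} and Hypothesis \ref{Hyp Strong Convergence on the Forcing Terms}. All of that is correct, and your identification of the weak limit $\epsilon\,\deps\vepstwo\rightharpoonup\gradt\vtwo-\boldsymbol{\chi}\,\gradt^{t}\zeta$ against the fixed factor $\gradt\wtwo$ reproduces exactly the paper's intermediate identity \eqref{Eq tangential limit with chi first version}.

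There is, however, a genuine gap at the end. The identity you arrive at still carries the extra summand $-\int_{\Omega_{2}}\E\,\boldsymbol{\chi}\,\gradt^{t}\zeta\bcol\gradt\wtwo$, whereas the asserted statement \eqref{Eq tangential limit problem with chi} contains only $\int_{\Omega_{2}}\E\,\gradt\vtwo\bcol\gradt\wtwo$: despite the label ``with chi'', the term $\boldsymbol{\chi}$ does not appear in the final equation, and your closing parenthetical suggests you believed it should. You must still prove that this summand vanishes. The paper does so by using $\partial_{z}\wtwo=\0$ to complete the contraction to the full gradient,
\begin{equation*}
-\int_{\Omega_2}\E\,\boldsymbol{\chi}\,\gradt^{t}\zeta\bcol\gradt\wtwo
=\int_{\Omega_2}\E\,\boldsymbol{\chi}\,(-\gradt^{t}\zeta,\,1)\bcol\grad\wtwo
=\int_{\Omega_2}\E\,\vert(-\gradt\zeta,\,1)\vert\,\boldsymbol{\chi}\cdot\big(\grad\wtwo\cdot\n\big),
\end{equation*}
and then observing that $\grad\wtwo\cdot\n=\partial_{\,\n}\wtwo=0$ because $\wtwo\in\X_{\tang}$, so the whole term is zero. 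This cancellation is not cosmetic: Section \ref{Sec Strong Convergence} of the paper singles it out as the reason the higher order term can be removed precisely when the test function lies in $\X_{\tang}$ (and as the obstruction to strong convergence when it does not). Without it, your argument establishes only the intermediate equation, not the lemma.
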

%
%
\begin{proof}
Let $ \wtwo\in \X_{\tang} $, then $ \left(0,\wtwo\right)\in \X $, test \eqref{problem fixed geometry 1} and get
\begin{equation*}
-  \epsilon \int_{\Omega_2} \pepstwo \, \divt
\wtan 
+\epsilon^{2}\int_{\Omega_2} \E\,\deps\vepstwo \bcol\deps\wtwo 
+\,\epsilon^{2} \int_{\Gamma} \beta \sqrt{\Q} \,\vtangeps \cdot
\wtang
 \,d S
= \epsilon\,\int_{\Omega_2} {\mathbf f_{\tang}^{2,\epsilon}} \cdot
\end{equation*}
Divide the whole expression over $ \epsilon $, expand the second summand according to the identity \eqref{Def operator D epsilon} and recall that $\partial_{z}\,\wtwo = 0$; this gives
\begin{multline*}
-   \int_{\Omega_2} \pepstwo \, \divt
\wtan 
+ \int_{\Omega_2} \E\,\left[\gradt\left(\epsilon\,\vepstwo\right)
+(\epsilon - 1)\partial_{z}\,\vepstwo\gradt^{t}\zeta \right] \bcol\gradt \, \wtwo 
%
+ \int_{\Gamma} \beta \sqrt{\Q} \,\epsilon\,\vtangeps \cdot
\wtang
 \,d S
= \int_{\Omega_2} {\mathbf f_{\tang}^{2,\epsilon}} \cdot
\wtang .
\end{multline*}
Letting $ \epsilon \downarrow 0 $, the limit $ \vtwo $ meets the condition
\begin{equation}\label{Eq tangential limit with chi first version}
-   \int_{\Omega_2} \ptwo \, \divt
\wtan 
+ \int_{\Omega_2} \E\,\left[\gradt \vtwo-\boldsymbol{\chi}\gradt^{t}\zeta\right] \bcol\gradt\wtwo 
+ \int_{\Gamma} \beta \sqrt{\Q} \,\vtang \cdot
\wtang
 \,d S
= \int_{\Omega_2} {\mathbf f_{\,\tang}^{\,2}} \cdot
\wtang . 
\end{equation}
We modify the higher order term using that $ \partial_{z}\,\wtwo = 0 $ 
%
%
\begin{equation*}
%
%
-\int_{\Omega_2} \E\,\boldsymbol{\chi}\gradt^{t}\zeta \bcol \gradt \wtwo
%
= \int_{\Omega_2} \E\,\boldsymbol{\chi}(-\gradt^{t}\zeta,\,1)\bcol\grad\wtwo
%
= \int_{\Omega_2} \E\,\vert(-\gradt\zeta,\,1)\vert\boldsymbol{\chi} 
\cdot\left(\grad\wtwo\cdot\n\right) 
\\
%
%
\end{equation*}
%
%
Recall that $ \wtwo \cdot \n = 0 $ because $ \wtwo \in \X_{\tang} $, then $ \partial_{\n} \wtwo = \grad \wtwo \cdot \n = 0 $.
Replacing the above expression in \eqref{Eq tangential limit with chi first version}, the statement \eqref{Eq tangential limit problem with chi} follows, since all the previous reasoning is valid for $ \wtwo \in \X_{\tang} $ arbitrary. 
\qed
\end{proof}
%
%
%
%
%
%
\subsection{The Higher Order Effects and the Limiting Problem}
%
%
%
%
\noindent 
The higher order effects of the $ \epsilon $-problem have to be modeled in the adequate space, to that end we use the information attained. We know the higher order term $ \boldsymbol{\chi} $ satisfy the condition \eqref{Eq chi dependence} and it belongs to $ \mathbf{L}^{2}(\Omega_{2}) $. This motivates the following definition
\begin{definition}
Define 
\begin{enumerate}[(i)]
\item The subspace 
\begin{equation}\label{Eq subsspace of higher order effects}
\W_{\high}\defining\big\{[\wone, \, \boldsymbol{\eta}] \in  \X
 : 
\boldsymbol{\eta}_{\tang} = \boldsymbol{0}_{T}, \,
\boldsymbol{\eta}\cdot \n = \wone\cdot\n
\big\vert_{\Gamma}(\xthilde)(1 - z)\big\} ,
\end{equation}
endowed with its natural norm. 

\item The space of limit normal effects in the following way
\begin{subequations}\label{Def space of higher order effects}
\begin{equation}\label{Eq space of higher order effects}
\X_{\high}^{0}\defining\big\{[\wone, \boldsymbol{\eta}] \in  \Hdiv(\Omega_{1})
\times\Hboldpartial
: \boldsymbol{\eta}_{\tang} = \boldsymbol{0}_{T}, \,\partial_{z}\boldsymbol{\eta} = 0,\,\boldsymbol{\eta}\cdot\n = -
\wone\cdot\n\big\vert_{\Gamma}(\xthilde)(1 - z)\big\} ,
\end{equation}
endowed with its natural norm
\begin{equation}\label{Eq space of higher order effects norm}
\big\Vert\big[\wone,\,\boldsymbol{\eta} \big]\big\Vert_{\,\X_{\high}^{0}}^{\,2} \defining
\big\Vert \wone\big\Vert_{\Hdiv(\Omega_{1})}^{\,2} 
+ \big\Vert \boldsymbol{\eta}\big\Vert_{\Hboldpartial}^{\,2}  .
\end{equation}
\end{subequations}
\end{enumerate}
\end{definition}
\begin{remark}\label{Rem Normal Effects Space}
\begin{enumerate}[(i)]

\item It is direct to prove that $ \X_{\high}^{0} $ is closed. 

\item Observe that due to its structure, the component $ \boldsymbol{\eta} $ of an element in $ \X_{\high}^{0} $ can be completely described by its normal trace on $ \Gamma $ i.e., the norm 
\begin{equation}\label{Eq space of higher order effects norm dimensional}
\big\Vert\big[\wone,\,\boldsymbol{\eta} \big]\big\Vert_{\,\X_{\high}^{0}}^{\,2} \defining
\big\Vert \wone\big\Vert_{\Hdiv(\Omega_{1})}^{\,2} 
+ \big\Vert \boldsymbol{\eta}\cdot \n \big\Vert_{0, \Gamma}^{\,2}  ,
\end{equation}
is equivalent to the norm \eqref{Eq space of higher order effects norm}. This feature will permit the dimensional reduction of the limiting problem formulation later on, see Section \ref{Sec Dimensional Reduction of the Limiting Problem}.

\item Let $ \vone $ and $ \xi $ be the limits found in the statements \eqref{velocity omega 1} and \eqref{convergence of normal velocity}, respectively. Define the function
\begin{equation}\label{Def Auxilliary Representation Normal Velocity}
\boldsymbol{\xi} \defining 
U\begin{Bmatrix}
\boldsymbol{0}_{T} \\[3pt]
\xi
\end{Bmatrix},
\end{equation}
then $ [\vone, \boldsymbol{\xi}] $ belongs to $ \X_{\high}^{0} $. This was one of the motivations behind the definition of $ \X_{\high}^{0} $ above. 


\item The information about the higher order term $ \boldsymbol{\chi} $ is complete only in its normal direction $ \boldsymbol{\chi}(\n) $. Furthermore, the facts that $ \boldsymbol{\chi} $ depends only on $ \xthilde$ (see Equation \eqref{Eq chi dependence}) and that $ \boldsymbol{\chi}\cdot \n = \partial_{z} \xi  =  - \vone\cdot \n\big\vert_{\Gamma}$, show that only information corresponding to the normal component of $ \boldsymbol{\chi} $ will be preserved by the modeling space $ \X_{\high}^{0} $, while the tangential component of the higher order term $ \boldsymbol{\chi}(\tang) $ will be given away for good. It is also observed that most of the terms involving the presence of $ \boldsymbol{\chi} $, require only its normal component, e.g. $ \boldsymbol{\chi} \cdot \partial_{\,\n} \wtwo = \boldsymbol{\chi}(\n) \cdot \partial_{\,\n} \wtwo $ in the third summand of the variational statement \eqref{Eq tangential limit problem with chi}. This was the reason why the space $ \X_{\high}^{0}  $ excludes tangential effects of the higher order term.
\end{enumerate}
\end{remark}
Before characterizing the asymptotic behavior of the normal flux we need a technical lemma
\begin{lemma}\label{Th Density Result}
The subspace $ \W_{\high} \subseteq \X $ is dense in $ \X^0_{\high} $.
\end{lemma}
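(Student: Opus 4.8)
The plan is to show that any element $[\wone,\boldsymbol{\eta}]\in\X^0_{\high}$ can be approximated in the $\X^0_{\high}$-norm by a sequence from $\W_{\high}$. The essential difference between the two spaces is twofold: elements of $\X^0_{\high}$ only require $\wone\in\Hdiv(\Omega_1)$ and $\boldsymbol{\eta}\in\Hboldpartial$ with $\boldsymbol{\eta}$ independent of $z$, whereas $\W_{\high}\subset\X$ requires the full $\mathbf{H}^1(\Omega_2)$-regularity on the channel component together with the matching conditions $\wtwo\cdot\n=\wone\cdot\n|_\Gamma(\xthilde)(1-z)$ (note the sign convention: in $\W_{\high}$ the normal trace is $+\wone\cdot\n|_\Gamma$, so I would first reconcile the sign — replace $\wone$ by $-\wone$ where needed, or observe that the map $[\wone,\boldsymbol{\eta}]\mapsto[-\wone,\boldsymbol{\eta}]$ is an isometry carrying one definition to the other, so density for one gives density for the other). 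Given the remark that the $\boldsymbol{\eta}$-component of an element of $\X^0_{\high}$ is entirely determined by its normal trace $\boldsymbol{\eta}\cdot\n\in L^2(\Gamma)$, and that the norm is equivalent to $\|\wone\|_{\Hdiv(\Omega_1)}^2+\|\boldsymbol{\eta}\cdot\n\|_{0,\Gamma}^2$, the whole problem reduces to approximation at the level of $\wone\in\Hdiv(\Omega_1)$ alone.

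First I would fix $[\wone,\boldsymbol{\eta}]\in\X^0_{\high}$ and set $g\defining\wone\cdot\n|_\Gamma\in H^{-1/2}(\partial\Omega_1)$; since $\wone\in\Hdiv(\Omega_1)$ this normal trace lies in $H^{-1/2}$ in general, but to land in $\W_{\high}$ I need the channel field $\wtwo$ to be genuinely $H^1$, which forces $g\in L^2(\Gamma)$. So the first step is a density argument in $\Hdiv(\Omega_1)$: approximate $\wone$ by $\wone_k\in\Hdiv(\Omega_1)$ with $\wone_k\cdot\n|_{\partial\Omega_1}\in L^2(\partial\Omega_1)$, $\wone_k\cdot\n=0$ on $\partial\Omega_1-\Gamma$, and $\wone_k\to\wone$ in $\Hdiv(\Omega_1)$ with $\wone_k\cdot\n\to g$ in an appropriate sense. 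One clean way: smooth fields in $\mathbf{H}^1(\Omega_1)$ that vanish near $\partial\Omega_1-\Gamma$ are dense in the relevant subspace of $\Hdiv(\Omega_1)$, and Lemma \ref{Th Surjectiveness from Hdiv to H^1/2} lets me control norms. Then for each $k$, define $\wtwo_k\defining U(\xthilde)\,[\boldsymbol{0}_T,\ \eta_k]^{t}$ with $\eta_k(\xthilde,z)\defining(\wone_k\cdot\n|_\Gamma)(\xthilde)(1-z)$; since $\wone_k\cdot\n|_\Gamma\in L^2(\Gamma)$ and the extension is affine in $z$ with the surface integral controlled via $\n\cdot\eversor_N\geq\delta>0$ (Remark \ref{Rem Interface Geometric Conditions}(ii)), one checks $\eta_k\in H^1(\Omega_2)$, hence $\wtwo_k\in\mathbf{H}^1(\Omega_2)$ because $U$ is a stream line localizer (of class $C^1$, bounded). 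Verifying $\wtwo_k\in\X_2$ requires $\wtwo_k=0$ on $\partial\Omega_2-(\Gamma+1)$ and $\wtwo_k\cdot\n=0$ on $\Gamma+1$; the latter holds since $\eta_k(\xthilde,\zeta(\xthilde)+1)=0$ by construction, while for the former I may need to additionally multiply by a cutoff in $\xthilde$ supported away from the lateral boundary of $\Omega_2$ — this is a standard but slightly delicate point. Finally $\wone_k\cdot\n=\wtwo_k\cdot\n$ on $\Gamma$ by design, so $[\wone_k,\wtwo_k]\in\W_{\high}$ (after fixing the sign).

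The convergence $\|[\wone_k,\wtwo_k]-[\wone,\boldsymbol{\eta}]\|_{\X^0_{\high}}\to 0$ then follows from the norm equivalence \eqref{Eq space of higher order effects norm dimensional}: the $\Hdiv(\Omega_1)$-part converges by construction, and $\|\wtwo_k\cdot\n-\boldsymbol{\eta}\cdot\n\|_{0,\Gamma}=\|(\wone_k-\wone)\cdot\n|_\Gamma\|_{0,\Gamma}\to 0$. I expect the \textbf{main obstacle} to be the first step: producing the approximating sequence $\wone_k\in\Hdiv(\Omega_1)$ with $L^2$ normal trace that converges to $\wone$ in $\Hdiv(\Omega_1)$ \emph{and} whose normal traces converge appropriately, while respecting the homogeneous condition on $\partial\Omega_1-\Gamma$. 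This is where the honest work lies — density of smooth fields in $\Hdiv$ with prescribed boundary behavior — whereas the construction of $\wtwo_k$ from $\wone_k$ and the final estimate are routine once the interface geometry bound $\n\cdot\eversor_N\geq\delta$ is invoked to keep the affine-in-$z$ extension bounded in $H^1(\Omega_2)$ uniformly.
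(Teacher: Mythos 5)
There is a genuine gap, and it sits exactly at the step you yourself flag as ``the main obstacle.'' Your plan is to approximate $\wone$ by fields $\wone_k$ that are smooth enough to have $H^1$-regular normal traces on $\Gamma$, and then to build the channel component from $\wone_k\cdot\n\big\vert_{\Gamma}$. But the convergence you need is twofold: $\wone_k\to\wone$ in $\Hdiv(\Omega_1)$ \emph{and} $\wone_k\cdot\n\to\wone\cdot\n$ in $L^{2}(\Gamma)$, because the $\X^{0}_{\high}$-norm (in its equivalent form \eqref{Eq space of higher order effects norm dimensional}) measures the channel component precisely through $\Vert\boldsymbol{\eta}\cdot\n\Vert_{0,\Gamma}$. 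Convergence in $\Hdiv(\Omega_1)$ only controls the normal traces in $H^{-1/2}(\partial\Omega_1)$, so the generic density statement you invoke (``smooth fields in $\mathbf{H}^{1}(\Omega_1)$ vanishing near $\partial\Omega_1-\Gamma$ are dense in the relevant subspace'') does not deliver the $L^{2}(\Gamma)$ convergence of the traces, and you give no argument that closes this. As written, the final estimate $\Vert(\wone_k-\wone)\cdot\n\Vert_{0,\Gamma}\to 0$ is unsupported.

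The paper closes this gap by reversing the direction of approximation. Instead of approximating $\wone$ and reading off its trace, it approximates the trace first: pick $\varpi\in H^{1}_{0}(\Gamma)$ with $\Vert\varpi-\boldsymbol{\eta}\cdot\n\big\vert_{\Gamma}\Vert_{0,\Gamma}\leq\epsilon$ (elementary density of $H^{1}_{0}(\Gamma)$ in $L^{2}(\Gamma)$), extend it to $\varrho(\xthilde,z)=\varpi(\xthilde)(1-z)$ to get the channel field, and then \emph{correct the Darcy field} so that the matching condition holds exactly: by Lemma \ref{Th Surjectiveness from Hdiv to H^1/2} the difference $g=\varrho\big\vert_{\Gamma}-\boldsymbol{\eta}\cdot\n\big\vert_{\Gamma}\in L^{2}(\Gamma)$ lifts to $\u\in\Hdiv(\Omega_1)$ with $\u\cdot\n=g$ on $\Gamma$, $\u\cdot\n=0$ on $\partial\Omega_1-\Gamma$, and the crucial bound $\Vert\u\Vert_{\Hdiv(\Omega_1)}\leq C_1\Vert g\Vert_{0,\Gamma}\leq C_1\epsilon$. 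The pair $(\wone+\u,\wtwo)$ then lies in $\W_{\high}$ and is $\sqrt{C_1+1}\,\epsilon$-close to $(\wone,\boldsymbol{\eta})$. This converts your hard problem (density in $\Hdiv$ with simultaneous $L^{2}(\Gamma)$ trace convergence) into two easy ones: density of $H^{1}_{0}(\Gamma)$ in $L^{2}(\Gamma)$ plus the $L^{2}$-based right inverse of the normal trace operator. Your observation about the sign discrepancy between the definitions of $\W_{\high}$ and $\X^{0}_{\high}$ is fair (it appears to be a typographical slip in the paper), and your use of the norm equivalence and of the bound $\n\cdot\eversor_{N}\geq\delta$ is sound; it is only the approximation mechanism for the Darcy component that needs to be replaced by the lifting argument.
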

\begin{proof} 
Consider an element $ \w = (\wone, \boldsymbol{\eta})\in \X^0_{\high} $, then $ \boldsymbol{\eta}_{\tang} = \boldsymbol{0}_{T} $, and $ \boldsymbol{\eta}\cdot\n  \in \Hpartial $ is completely
defined by its trace on the interface $\Gamma$. Given $ \epsilon>0 $,
take $\varpi \in H^{1}_0(\Gamma)$ such that $ \Vert \varpi -
\boldsymbol{\eta} \cdot \n \big\vert_{\,\Gamma} \Vert_{L^{2}(\Gamma)}\leq \epsilon $. Now extend
the function to the whole domain using the rule $ \varrho(\xthilde,
z)\defining\varpi(\xthilde)(1-z) $, then $ \Vert \varrho - \boldsymbol{\eta}\cdot \n
\Vert_{\Hpartial}\leq \epsilon $. 
From the construction of $ \varrho $ we know
that $ \Vert \varrho\big\vert_{\,\Gamma} - \boldsymbol{\eta}\cdot \n\big\vert_{\,\Gamma}
\Vert_{0,\Gamma} = \Vert \varpi - \boldsymbol{\eta}\cdot \n\big\vert_{\,\Gamma}
\Vert_{0,\Gamma}\leq \epsilon $. Define $ g = \varrho\big\vert_{\,\Gamma}
- \boldsymbol{\eta}\cdot \n\big\vert_{\,\Gamma}\in L^{2}(\Gamma) $, due to Lemma \ref{Eq
Normal Trace Definition} there exists $ \u\in \Hdiv(\Omega_{1}) $ such
that $ \u \cdot\n = g $ on $ \Gamma$, $\u\cdot \n = 0 $ on $ \partial
\Omega_{1}-\Gamma $ and $ \Vert \u \Vert_{\,\Hdiv (\Omega_1)}\leq C_1
\Vert g \Vert_{0,\Gamma} $ with $ C_{1} $ depending only on
$ \Omega_{1} $. Then, the function $ \wone + \u $ is such that
$ (\wone+\u)\cdot\n = \wone\cdot\n+\varpi - \boldsymbol{\eta}\cdot \n = \varpi $ and $\Vert
\wone+\u - \wone \Vert_{\,\Hdiv(\Omega_1)} = \Vert\u
\Vert_{\,\Hdiv(\Omega_1)}\leq C_1 \Vert g \Vert_{0,\Gamma}\leq
C_1\,\epsilon$. Moreover defining
\begin{equation*}
\wtwo \defining U \begin{Bmatrix}
\boldsymbol{0}_{T}\\[3pt]
 \varrho
\end{Bmatrix},
\end{equation*}
we notice that the function $ (\wone + \u,  \wtwo) $ belongs to $ \W_{\high} $. Due to the previous
observations we have
\begin{equation*}
\big\Vert \w - (\wone+\u, \, \wtwo)  \big\Vert_{\X^0} =
\big\Vert (\wone,\boldsymbol{\eta}) - (\wone+\u, \wtwo)
 \big\Vert_{\X^0_{\high}}\leq \sqrt{C_1 + 1 } \; \epsilon .
\end{equation*}
Since the constants depend only on the domains
$ \Omega_{1} $ and $ \Omega_{2} $, it follows that $ \W_{\high} $ is dense in
$ \X^0_{\high} $.
\qed
\end{proof}
\begin{lemma}[Limiting Normal Behavior's Variational Statement]
\label{Th Normal behavior of the Limit Problem}
Let $ \vone $, $ \vtwo $ be the limits found in Theorem \ref{Th Direct Weak Convergence of Velocities} and let $ \pone $, $ \ptwo $ be the limits found in Theorem \ref{Th Convergence of Pressure One}. Then, the following variational statement is satisfied
%
\begin{multline}\label{Eq higher order limit equation}
\int_{\Omega_1}  \Q\, \vone \cdot \w^1 \,d\x -
\int_{\Omega_1}\pone \, \div \wone
\,d\x
+ \int_{\Gamma} \ptwo \vert (-\gradt\zeta, 1) \vert (\wone\cdot\n\big\vert_{\Gamma})\, dS \\
+\int_{\Gamma} (\alpha + \Eav)\, (\vone\cdot\n) (\wone\cdot\n) \,dS
%
= 0,\quad\text{for}\;\text{all}\; 
\wone 
\in\X_{\high}^{0}.
\end{multline}
Here, it is understood that $ \displaystyle \Eav \defining \int_{\zeta(\xthilde)}^{\zeta(\xthilde)} \E \, dz $ i.e., $ \Eav $ is the average in the $ z $-direction.
\end{lemma}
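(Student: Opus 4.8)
The plan is to revisit the key identity \eqref{Eq basic limit in normal direction} obtained in the proof of Theorem \ref{Th Dependence of xi and Pressure 2}, which holds for all $ \w = [\wone, \wtwo] \in \X $, and then restrict attention to test functions in the subspace $ \W_{\high} $. The first step is to take $ \w \in \W_{\high} $, so that $ \wtwo = U\,(\boldsymbol{0}_{T},\, \wnormal)^{t} $ with $ \wnormal = \wone\cdot\n\big\vert_{\Gamma}(\xthilde)(1-z) $; in particular $ \wtwo(\tang) = \boldsymbol 0 $, $ \partial_{z}\wtwo\cdot\n = -\wone\cdot\n\big\vert_{\Gamma}(\xthilde) $, and $ \partial_{\,\n}\wtwo\cdot\n $ can be computed from the explicit $ z $-dependence. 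I would substitute these into \eqref{Eq basic limit in normal direction}, using $ \vtwo\cdot\n = 0 $ from \eqref{Eq vtwo tangential and higher order terms relationship}, the relation $ \boldsymbol{\chi}\cdot\n = \partial_{z}\xi = -\vone\cdot\n\big\vert_{\Gamma} $ from \eqref{Eq chi dependence}, and the dependences $ \ptwo = \ptwo(\xthilde) $, $ \boldsymbol{\chi} = \boldsymbol{\chi}(\xthilde) $. The terms involving $ \partial_{\,\n}\vtwo $ should combine with the $ \boldsymbol{\chi} $-terms; the point is that after carrying out the $ z $-integration over $ [\zeta(\xthilde),\zeta(\xthilde)+1] $, every integrand becomes a function of $ \xthilde $ times the profile $ (1-z) $ or its derivative, so the volume integrals over $ \Omega_{2} $ collapse to surface integrals over $ \Gamma $ (equivalently, integrals over $ G $ against the Jacobian $ \vert(-\gradt\zeta,1)\vert = \n\cdot\eversor_{N})^{-1}\vert(-\gradt\zeta,1)\vert^2 $... ), which is where the averaged viscosity $ \Eav = \int_{\zeta(\xthilde)}^{\zeta(\xthilde)+1}\E\,dz $ enters as the coefficient multiplying $ (\vone\cdot\n)(\wone\cdot\n) $.

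The second step is to identify the three surviving surface terms. The Darcy bulk terms $ \int_{\Omega_1}\Q\vone\cdot\wone - \int_{\Omega_1}\pone\div\wone $ pass through unchanged. The pressure term $ -\int_{\Omega_2}\vert(-\gradt\zeta,1)\vert\,\ptwo\,\partial_{z}(\wtwo\cdot\n) $ becomes, upon using $ \partial_{z}\wnormal = -\wone\cdot\n\big\vert_{\Gamma}(\xthilde) $ and integrating in $ z $, the term $ \int_{\Gamma}\ptwo\vert(-\gradt\zeta,1)\vert(\wone\cdot\n\big\vert_{\Gamma})\,dS $ up to the Jacobian bookkeeping of Remark \ref{Rem relations of the normal and derivative with respect to the normal}. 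The viscous term $ \int_{\Omega_2}\E\vert(-\gradt\zeta,1)\vert\,\partial_{\,\n}\vtwo\cdot\partial_{z}\wtwo $ together with $ \int_{\Omega_2}\E\,\boldsymbol{\chi}\cdot\partial_{z}\wtwo\vert(-\gradt\zeta,1)\vert^{2} $ — using $ \boldsymbol{\chi}\cdot\n = -\vone\cdot\n $ and the fact that only the normal component of $ \boldsymbol{\chi} $ pairs against $ \partial_{z}\wtwo\cdot\n $ (because $ \wtwo\in\W_{\high}$ forces $ \partial_{z}\wtwo $ to be purely normal) — produce, after the $ z $-integration, exactly $ \int_{\Gamma}\Eav(\vone\cdot\n)(\wone\cdot\n)\,dS $. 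The Beavers–Joseph–Saffman surface term drops out since $ \wtwo(\tang)=\wtang=\boldsymbol 0 $ on $ \X_{\tang} $... rather on $ \W_{\high} $, and the $ \alpha $-term survives verbatim. Collecting everything gives \eqref{Eq higher order limit equation} for all $ \wone $ with $ [\wone,\wtwo]\in\W_{\high} $.

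The final step is to extend the identity from $ \W_{\high} $ to $ \X^{0}_{\high} $ by density. Here I invoke Lemma \ref{Th Density Result}: $ \W_{\high} $ is dense in $ \X^{0}_{\high} $. Every term in \eqref{Eq higher order limit equation} is a continuous linear functional of $ \wone $ in the $ \X^{0}_{\high} $-norm — the bulk Darcy terms are continuous on $ \Hdiv(\Omega_{1}) $, and the three surface terms are continuous in $ \Vert\wone\cdot\n\Vert_{0,\Gamma} $, hence continuous by the equivalent norm \eqref{Eq space of higher order effects norm dimensional} — so the equality passes to the closure. I expect the main obstacle to be the careful bookkeeping in the second step: tracking the Jacobian factors $ \vert(-\gradt\zeta,1)\vert $ versus $ \n\cdot\eversor_{N} $ when converting the volume integrals over $ \Omega_{2} $ into surface integrals over $ \Gamma $, and making sure the viscous and $ \boldsymbol{\chi} $ contributions genuinely recombine into the single averaged coefficient $ \Eav $ rather than leaving a residual tangential remainder — this is exactly the place where the structural facts $ \vtwo\cdot\n=0 $, $ \partial_{z}\vtwo=0 $ and $ \boldsymbol{\chi}\cdot\n = -\vone\cdot\n $ must be used in concert, and where an error in the normal-derivative computation $ \partial_{\,\n}\vtwo\cdot\n $ versus $ \partial_{z}\xi $ would be easy to make.
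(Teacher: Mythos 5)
Your overall strategy -- restrict to test functions with the normal profile $(1-z)$, exploit $\boldsymbol{\chi}\cdot\n=\partial_{z}\xi=-\vone\cdot\n\big\vert_{\Gamma}$ and $\ptwo=\ptwo(\xthilde)$ to collapse the $\Omega_{2}$-integrals onto $\Gamma$, then extend from $\W_{\high}$ to $\X_{\high}^{0}$ by Lemma \ref{Th Density Result} and continuity in the norm \eqref{Eq space of higher order effects norm dimensional} -- is the same as the paper's. The difference is the starting point: the paper tests \eqref{problem fixed geometry 1} afresh with $[\wone,\boldsymbol{\eta}]\in\W_{\high}$ and passes to the limit, obtaining \eqref{Eq Testing on W_n and Passing to the Limit}, in which the only surviving viscous contribution is $\int_{\Omega_2}\E\,\boldsymbol{\chi}\cdot\partial_{z}\boldsymbol{\eta}$; you instead recycle \eqref{Eq basic limit in normal direction}, which carries in addition the terms $\int_{\Omega_2}\E\,\vert(-\gradt\zeta,1)\vert\,\partial_{\,\n}\vtwo\cdot\partial_{z}\wtwo$ and $\int_{\Omega_2}\E\,\boldsymbol{\chi}\cdot\partial_{z}\wtwo\,\vert(-\gradt\zeta,1)\vert^{2}$ coming from the limit of $\epsilon^{2}\int\E\,\deps\vepstwo\bcol\deps\wtwo$.

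This is where your plan, as written, does not close. For $\wtwo\in\W_{\high}$ one has $\partial_{z}\wtwo=-(\wone\cdot\n\big\vert_{\Gamma})\,\n$, so the first extra term becomes $-\int_{\Omega_2}\E\,\vert(-\gradt\zeta,1)\vert\,(\wone\cdot\n\big\vert_{\Gamma})\,(\partial_{\,\n}\vtwo\cdot\n)$. You cannot discard it on the grounds that $\vtwo\cdot\n=0$: since $\n=\n(\xthilde)$ varies along a curved interface, $\partial_{\,\n}(\vtwo\cdot\n)=0$ only gives $\partial_{\,\n}\vtwo\cdot\n=-\vtwo\cdot\partial_{\,\n}\n$, which is generically nonzero. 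This residual does not appear in \eqref{Eq higher order limit equation}, and your sketch asserts rather than proves that it is absorbed. Likewise, the $\boldsymbol{\chi}$-term you start from carries the weight $\vert(-\gradt\zeta,1)\vert^{2}$, so after the $z$-integration it produces $\int_{\Gamma}\Eav\,\vert(-\gradt\zeta,1)\vert\,(\vone\cdot\n)(\wone\cdot\n)\,dS$ rather than the unweighted coefficient $\alpha+\Eav$ of the statement; the paper's route, which takes the $\boldsymbol{\chi}$-contribution only from $\int\E\,\partial_{z}\vepstwo\cdot\partial_{z}\boldsymbol{\eta}$, never generates that weight. You correctly identified this recombination as ``the main obstacle,'' but the obstacle is real: to make your version work you must either prove that the residual normal-derivative term and the extra Jacobian weight cancel, or abandon \eqref{Eq basic limit in normal direction} and redo the limit passage directly for $\W_{\high}$ test functions as the paper does.
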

\begin{proof}
Take $ [\wone,\boldsymbol{\eta}] \in \W_{\n} $, test \eqref{problem fixed geometry 1} and let $ \epsilon \rightarrow 0 $, this gives
\begin{multline} \label{Eq Testing on W_n and Passing to the Limit}
\int_{\Omega_1}  \Q\, \vone \cdot \w^1 \,d\x -
\int_{\Omega_1}\pone \, \div \wone
\,d\x 
%
+ \int_{\Omega_2} \ptwo \, \partial_{z}\, \boldsymbol{\eta}_{\scriptscriptstyle T}\cdot\gradt \zeta \,d\widetilde{\x} \,dz
- \int_{\Omega_2} \ptwo \, \partial_{z} \boldsymbol{\eta}_{\scriptscriptstyle N} \,d\widetilde{\x} \,dz \\
%
%
+\int_{\Omega_2} \E\,\boldsymbol{\chi}\cdot\partial_{z}\,\boldsymbol{\eta} \,d\widetilde{\x} \,dz
%
+\alpha
\int_{\Gamma}\big(\vone\cdot\n\big)\,\big(\wone\,\cdot\n\big)\,dS
%
= 0 .
\end{multline}
Notice that the third and fourth summands in the expression above can be written as
\begin{equation*}
\begin{split}
\int_{\Omega_2} \ptwo \, \partial_{z}\, \boldsymbol{\eta}_{\scriptscriptstyle T}\cdot\gradt \zeta \,d\widetilde{\x} \,dz
- \int_{\Omega_2} \ptwo \, \partial_{z} \boldsymbol{\eta}_{\scriptscriptstyle N} \,d\widetilde{\x} \,dz
= - \int_{\Omega_{2}} \ptwo \partial_{z} \boldsymbol{\eta} \cdot
\begin{Bmatrix}
- \gradt^{t}\zeta \\
1
\end{Bmatrix} 
& = - \int_{\Omega_2} \ptwo \vert (-\gradt\zeta, 1) \vert \, \partial_{z} \boldsymbol{\eta} \cdot \n \\
& = - \int_{\Omega_2} \ptwo \vert (-\gradt\zeta, 1) \vert (- \wone\cdot\n\big\vert_{\Gamma}) \\
& = - \int_{\Gamma} \ptwo \vert (-\gradt\zeta, 1) \vert (- \wone\cdot\n\big\vert_{\Gamma})\, dS, 
\end{split}
\end{equation*}
where the second equality holds by the definition of $ \W_{\n} $ and the last equality holds since $ \ptwo $ is independent from $ z $ \eqref{Eq pressure two dependence}. Next, recalling the identities \eqref{Eq vtwo tangential and higher order terms relationship}, \eqref{partial xi dependence} and \eqref{Eq chi dependence}, observe that 
\begin{equation*}
\begin{split}
\int_{\Omega_{2}} \E \, \boldsymbol{\chi} \cdot \partial_{z}\boldsymbol{\eta} 
= 
\int_{\Omega_{2}} \E \, (\boldsymbol{\chi}\cdot \n ) \, \partial_{z}(\boldsymbol{\eta} \cdot \n) 
= 
\int_{\Omega_{2}}  \E \, \partial_{z} \xi \, (- \wone\cdot\n\big\vert_{\Gamma}) 
&=
\int_{\Omega_{2}}  \E \, (- \vone\cdot\n\big\vert_{\Gamma}) \,  (- \wone\cdot\n\big\vert_{\Gamma}) \\
&=
\int_{\Gamma}  \Eav \, (- \vone\cdot\n\big\vert_{\Gamma}) \,  (- \wone\cdot\n\big\vert_{\Gamma}) \, dS .
\end{split}
\end{equation*}
Replacing the last two identities in \eqref{Eq Testing on W_n and Passing to the Limit} we conclude that the variational statement \eqref{Eq higher order limit equation} holds for every test function in $ \W_{\high} $. Since the bilinear form of the statement is continuous with respect to the norm $ \Vert \cdot \Vert_{\X^{0}_{\high}} $, it follows that the statement holds for all element $ \w \in \X_{\high}^{0} $. 
\qed
\end{proof}
%
%
%
%
%
%
%
%
\subsection{Variational Formulation of the Limit Problem}
\noindent 
In this section we give a variational formulation of the limiting problem and prove it is well-posed. We begin characterizing the limit form of the conservation laws
\begin{lemma}[Mass Conservation in the Limiting Problem]\label{Th Mass Conservation in the Limit Problem}
Let $ \vone, \vtwo $ be the limits found in Theorem \ref{Th Direct Weak Convergence of Velocities} then
\begin{subequations}\label{Eq limit conservation laws}
\begin{equation}  \label{Eq limit conservation law in Omega 1}
\div \vone = h^1 .
\end{equation}
\begin{align}\label{Eq limit conservation law in Omega 2}
\int_{\Omega_2}  \divt\vtwo\,\varphi^{2} 
-  \int_{\Gamma}  
\big\vert\big(-\gradt\zeta, 1\big)\big\vert\big(\vone\cdot\n\big)\varphi^{2}
\,d S
= 0, &
& \text{for all } \varphi^{2} \in L^{2}(\Omega_{2}), \, \varphi^{2} = \varphi^{2}(\xthilde) .
\end{align}
\end{subequations}
\end{lemma}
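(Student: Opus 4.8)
The first identity \eqref{Eq limit conservation law in Omega 1} requires no new work: it is exactly \eqref{Eq velocity one conservation} of Corollary \ref{Th Direct Weak Convergence of Velocities}(i). Indeed, taking $\varphi^{2}\equiv 0$ in \eqref{problem fixed geometry 2} gives $\div\vepsone=h^{1,\epsilon}$ in $L^{2}(\Omega_{1})$, and passing to the limit with the weak $\Hdiv(\Omega_1)$-convergence \eqref{velocity omega 1} together with the weak convergence $h^{1,\epsilon}\rightharpoonup h^{1}$ from Hypothesis \ref{Hyp Strong Convergence on the Forcing Terms} yields $\div\vone=h^{1}$. So only \eqref{Eq limit conservation law in Omega 2} needs an argument.

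For \eqref{Eq limit conservation law in Omega 2} the plan is to test \eqref{problem fixed geometry 2} with $\varphi^{1}=0$ and a $z$-independent $\varphi^{2}=\varphi^{2}(\xthilde)\in L^{2}(\Omega_{2})$, and then let $\epsilon\downarrow 0$. Writing $\epsilon(1-\epsilon^{-1})=\epsilon-1$ and regrouping the three $\Omega_{2}$-integrals exactly as in the first display of the proof of Theorem \ref{Th Dependence of xi and Pressure 2}, the tested equation becomes
\[
\int_{\Omega_2}\divt\big(\epsilon\,\vtaneps\big)\,\varphi^{2}
+\int_{\Omega_2}\partial_{z}\big(\epsilon\,\vtaneps\big)\cdot\gradt\zeta\;\varphi^{2}
+\int_{\Omega_2}\big\vert(-\gradt\zeta,1)\big\vert\,\partial_{z}\vnormaleps\;\varphi^{2}=0,
\]
where the last integrand comes from $\partial_{z}\vepstwo\cdot(-\gradt\zeta,1)=\vert(-\gradt\zeta,1)\vert\,\partial_{z}(\vepstwo\cdot\n)=\vert(-\gradt\zeta,1)\vert\,\partial_{z}\vnormaleps$, which is legitimate since $\n=\n(\xthilde)$ does not depend on $z$ (see \eqref{Eq relation gradient surface and normal surface}).

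Next I pass to the limit term by term. The first integral tends to $\int_{\Omega_2}\divt\vtwo\,\varphi^{2}$ by the weak $\mathbf{H}^{1}(\Omega_2)$-convergence $\epsilon\,\vepstwo\rightharpoonup\vtwo$ of \eqref{funcL2weak}; the second tends to $0$ because $\partial_{z}(\epsilon\,\vepstwo)\to\bm{0}$ strongly in $\mathbf{L}^{2}(\Omega_2)$ by \eqref{Eq epsilon partial tangential L2 weak} while $\gradt\zeta\,\varphi^{2}$ is bounded; and the third tends to $\int_{\Omega_2}\vert(-\gradt\zeta,1)\vert\,\partial_{z}\xi\,\varphi^{2}$ since $\vnormaleps\rightharpoonup\xi$ in $\Hpartial$ forces $\partial_{z}\vnormaleps\rightharpoonup\partial_{z}\xi$ weakly in $L^{2}(\Omega_2)$, tested against $\vert(-\gradt\zeta,1)\vert\,\varphi^{2}\in L^{2}(\Omega_2)$. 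This produces
\[
\int_{\Omega_2}\divt\vtwo\,\varphi^{2}+\int_{\Omega_2}\big\vert(-\gradt\zeta,1)\big\vert\,\partial_{z}\xi\,\varphi^{2}=0 .
\]
To finish, I substitute $\partial_{z}\xi=-\vone\cdot\n\big\vert_{\Gamma}$ from \eqref{partial xi dependence}; since $\partial_{z}\xi$, $\vert(-\gradt\zeta,1)\vert$ and $\varphi^{2}$ are all independent of $z$ and each fibre $\{z:\zeta(\xthilde)<z<\zeta(\xthilde)+1\}$ has unit length, the second volume integral collapses to an integral over the projection $G$ of $\Gamma$, which I rewrite as the surface integral $-\int_{\Gamma}\vert(-\gradt\zeta,1)\vert(\vone\cdot\n)\varphi^{2}\,dS$ via the graph parametrization of $\Gamma$; this is \eqref{Eq limit conservation law in Omega 2}.

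The argument is short because the substantive facts — that $\xi$ is affine in $z$ with $\partial_{z}\xi=-\vone\cdot\n$, and the $z$-independence of $\vtwo$ and $\ptwo$ — were already obtained in Corollary \ref{Th Direct Weak Convergence of Velocities} and Theorem \ref{Th Dependence of xi and Pressure 2}. The only delicate points are clerical: the regrouping of the three $\Omega_{2}$-terms must be arranged so that, after rearrangement, each summand converges through exactly one of the available convergences (the strongly null $\partial_{z}(\epsilon\vepstwo)$, the weakly convergent $\epsilon\vepstwo$, and the weakly convergent $\partial_{z}\vnormaleps$), and the final reduction of a $z$-independent volume integral to a boundary integral over $\Gamma$. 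It is exactly the restriction to $z$-independent test functions $\varphi^{2}=\varphi^{2}(\xthilde)$ that makes these limits meaningful against the ($z$-independent) limiting fields, which is why \eqref{Eq limit conservation law in Omega 2} is asserted only for that class.
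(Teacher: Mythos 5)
Your proposal is correct and follows essentially the same route as the paper: part (i) is read off from the limit of the Darcy mass--balance equation, and part (ii) is obtained by testing \eqref{problem fixed geometry 2} with a $z$-independent $\varphi^{2}$, regrouping the three $\Omega_{2}$-integrals so that each converges through one of the available convergences, and then substituting $\partial_{z}\xi=-\vone\cdot\n\big\vert_{\Gamma}$ and collapsing the $z$-independent volume integral to a surface integral over $\Gamma$. The only cosmetic difference is that you kill the term $\int_{\Omega_2}\partial_{z}\big(\epsilon\,\vtaneps\big)\cdot\gradt\zeta\,\varphi^{2}$ before the limit via the strong convergence of $\partial_{z}(\epsilon\,\vepstwo)$, whereas the paper passes it to the limit and then uses $\partial_{z}\vtwo=0$; the two are equivalent.
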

\begin{proof}
Take $ \Phi = \left(\varphi^1, 0\right)\in \Y $, test \eqref{problem fixed geometry 2} and let $ \epsilon\downarrow 0 $, we have
\begin{align*}  
& \int_{\Omega_1}\grad\cdot\vone
\varphi^1 
= \int_{\Omega_1} h^{1} \, \varphi^1 , &
& \text{ for all } \varphi^{1} \in L^{2}(\Omega_{1}).
\end{align*}
The statement above implies \eqref{Eq limit conservation law in Omega 1}.
\newline
\newline
For the variational statement \eqref{Eq limit conservation law in Omega 2}, first recall the dependence of the limit velocity given in equation \eqref{Eq pressure two dependence}. Hence, consider $ \Phi = \left(0, \varphi^2\right)\in \Y $ such that $ \varphi^2 = \varphi^2\pxthilde $, test \eqref{problem fixed geometry 2} and regroup terms using \eqref{Eq relation gradient surface and normal surface}, this yields
\begin{equation*}  
\int_{\Omega_2}  \divt\big(\epsilon\,\vtaneps\big)\,\varphi^{2} 
+ \int_{\Omega_2}  \partial_{z}\big(\epsilon\,\vtaneps\big)\cdot\gradt\zeta
\;\varphi^{2} 
+  \int_{\Omega_2}  \vert(-\gradt\zeta, 1)\vert\partial_{z}\big(\vepstwo\cdot\n\big)\varphi^2
= 0 .
\end{equation*}
Let $ \epsilon\downarrow 0 $ and get
\begin{equation*}  
\int_{\Omega_2}  \divt\vtwo\,\varphi^{2} 
+ \int_{\Omega_2}  \partial_{z}\vtwo\cdot\gradt\zeta
\;\varphi^{2} 
+  \int_{\Omega_2}  \vert(-\gradt\zeta, 1)\vert\partial_{z}\xi\;\varphi^{2}
\,d\widetilde{x} \,dz
= 0 .
\end{equation*}
In the expression above, recall that $ \partial_{z}\,\vtwo = 0 $, $ \varphi^{2} = \varphi_2(\xthilde) $ and the identity \eqref{partial xi dependence} then, the statement \eqref{Eq limit conservation law in Omega 2} follows.
\qed
\end{proof}
Next, we introduce the function spaces of the limiting problem
\begin{definition}\label{Def Limit Spaces}
Define the space of velocities by 
\begin{subequations}
\begin{equation}\label{Def Velocities Limit Space}
\X^{0} \defining \big\{ \w + \u : \w \in \X_{\high}^{0},\,  \u \in \X_{\tang} \big\}, 
\end{equation}
endowed with the natural norm of the space $ \X_{\high}^{0} \bigoplus \X_{\tang} $. Define the space of pressures by
\begin{equation}\label{Def Pressures Limit Space}
\Y^{0} \defining  
\big\{\Phi = \big(\varphi^1, \,\varphi^2\big)\in \Y:\varphi^2 = \varphi^2\pxthilde \big\} ,
\end{equation}
endowed with its natural norm.
\end{subequations}
\end{definition}
\begin{theorem}[Limiting Problem Variational Formulation]\label{Th Limiting Problem Variational Formulation}
Let $ \vone, \vtwo $ be the limits found in Theorem \ref{Th Direct Weak Convergence of Velocities} and let $ \pone $, $ \ptwo $ be the limits found in Theorem \ref{Th Convergence of Pressure One}. Then, they satisfy the following variational problem
\begin{subequations}\label{Eq variational formulation limit problem}
\begin{flushleft}
$ \big[\v,\,p\big]\in\X^{0}\times \Y^{0} :$
\end{flushleft}
\begin{multline}  \label{Eq variational formulation limit problem 1}
\int_{\Omega_{1}}\Q\,\vone\cdot\wone  
- \int_{\Omega_{1}}\pone\,\div\wone 
-  \int_{\Omega_2} \ptwo \, \divt
\wtan 
%
+ \int_{\Omega_2} \E\,\gradt\vtwo \bcol\gradt\wtwo 
%
\\
+ \int_{\Gamma} \beta \sqrt{\Q} \,\vtang \cdot \wtang
 \,d S
+ \int_{\Gamma} (\alpha + \Eav)\big(\vone\cdot\n\big)
\big(\wone\cdot\n\big)\,d S
%
+ \int_{\Gamma} 
\vert(-\gradt\zeta, \,1)\vert\ptwo \big(\wone\cdot\n\big) d S
%
%
= \int_{\Omega_2} {\mathbf f_{\,\tang}^{\,2}} \cdot
\wtang , 
\end{multline}
\begin{equation}  \label{Eq variational formulation limit problem 2}
\int_{\Omega_1}\div \vone
\varphi^1 
+\int_{\Omega_2}  \divt\vtwo\,\varphi^{2} 
-  \int_{\Gamma}  
\vert(-\gradt\zeta, 1)\vert 
\big(\vone\cdot\n\big)\varphi^{2}
\,d S 
= \int_{\Omega_1} h^{1} \, \varphi^1 ,
\end{equation}
\begin{flushright}
for all 
$ \big[\w,\,\Phi\big]\in\X^{0}\times \Y^{0} $.
\end{flushright}
\end{subequations}
%
%
Moreover, the problem \eqref{Eq variational formulation limit problem} is well-posed.
\end{theorem}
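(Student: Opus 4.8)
The argument has two parts: first that the limits of Theorems~\ref{Th Direct Weak Convergence of Velocities} and~\ref{Th Convergence of Pressure One} solve \eqref{Eq variational formulation limit problem}, and then that this problem is well-posed in the sense of Theorem~\ref{Th well posedeness mixed formulation classic}. For the first part I would observe that \eqref{Eq variational formulation limit problem 2} is nothing but Lemma~\ref{Th Mass Conservation in the Limit Problem}: testing with $\Phi = (\varphi^{1},0)$ reproduces \eqref{Eq limit conservation law in Omega 1} and testing with $\Phi = (0,\varphi^{2})$, $\varphi^{2} = \varphi^{2}(\xthilde)$, reproduces \eqref{Eq limit conservation law in Omega 2}, and linearity in $\Phi$ gives the identity on all of $\Y^{0}$. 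For \eqref{Eq variational formulation limit problem 1} I would use that both sides are linear in $\w$ and that $\X^{0} = \X_{\high}^{0}\bigoplus\X_{\tang}$, so it suffices to check the identity on each summand: on $\X_{\tang}$ the $\Omega_{1}$-integrals, the interface terms carrying $\wone\cdot\n$ and the $\Eav$-term all vanish and the statement collapses to \eqref{Eq tangential limit problem with chi} of Lemma~\ref{Th Tangential Behavior of the Limit Problem}; on $\X_{\high}^{0}$ one has $\wtang = \boldsymbol{0}$, so the Beavers--Joseph surface term and the tangential force term drop, and, after reducing the remaining $\Omega_{2}$-integrals to the interface $\Gamma$ via the identities of Remark~\ref{Rem relations of the normal and derivative with respect to the normal} and an integration by parts in $z$, what is left is precisely \eqref{Eq higher order limit equation} of Lemma~\ref{Th Normal behavior of the Limit Problem}. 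Adding the two yields \eqref{Eq variational formulation limit problem 1}; that $[\v,p]$ belongs to $\X^{0}\times\Y^{0}$ follows from the structure results \eqref{Eq structure of tangential velocity}, \eqref{partial xi dependence}, \eqref{Eq chi dependence}, \eqref{Eq pressure two dependence} together with the observation in Remark~\ref{Rem Normal Effects Space} that $[\vone,\boldsymbol{\xi}]\in\X_{\high}^{0}$.

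\emph{Well-posedness.} I would read off from \eqref{Eq variational formulation limit problem} the operators of Theorem~\ref{Th well posedeness mixed formulation classic}: $\A$ is the sum of the Darcy form $\int_{\Omega_{1}}\Q\,\vone\cdot\wone$, the tangential Brinkman form $\int_{\Omega_{2}}\E\,\gradt\vtwo\bcol\gradt\wtwo$, the Beavers--Joseph surface form and the surface form $\int_{\Gamma}(\alpha+\Eav)(\vone\cdot\n)(\wone\cdot\n)\,dS$; $\B$ is the divergence/interface-flux pairing common to both equations; and $\C = 0$. Then condition (iii) holds trivially, and $\A$ is non-negative because $\Q$ is elliptic, $\mu > 0$, $\beta,\alpha\geq 0$ and $\Eav = \int_{\zeta(\xthilde)}^{\zeta(\xthilde)+1}\mu\,dz > 0$ (Hypothesis~\ref{Hyp Bounds on the Coefficients}). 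For $\X^{0}$-coercivity on $\ker(\B)$ I would argue that $\w\in\ker(\B)$ with the choice $\varphi^{2} = 0$ forces $\div\wone = 0$, whence $\Vert\wone\Vert_{\Hdiv(\Omega_{1})}^{2} = \Vert\wone\Vert_{0,\Omega_{1}}^{2}$ is controlled by $\int_{\Omega_{1}}\Q\,\wone\cdot\wone$; the normal-trace part of the $\X_{\high}^{0}$-norm, which by Remark~\ref{Rem Normal Effects Space} is equivalent to the full norm, is controlled by $\int_{\Gamma}(\alpha+\Eav)(\wone\cdot\n)^{2}\,dS$; and, since the elements of $\X_{\tang}$ depend only on $\xthilde$ and vanish on the lateral boundary, Poincar\'e's inequality on $G$ and their $z$-independence bound their full $\mathbf{H}^{1}(\Omega_{2})$-norm by the $\mathbf{L}^{2}$-norm of their tangential gradient, which is in turn controlled by the Brinkman part of $\A(\w,\w)$. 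Hence $\A$ is $\X^{0}$-coercive on $\ker(\B)$.

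\emph{The hard part} is the inf-sup condition \eqref{Ineq general inf-sup condition} for $\B$. Given $(p^{1},p^{2})\in\Y^{0}$ I would build the test velocity in two pieces. First, a Bogovskii-type field $\wone_{1}$ on $\Omega_{1}$ with $\div\wone_{1} = -p^{1}$ and vanishing normal trace on $\partial\Omega_{1}$ (in particular on $\Gamma$), $\Vert\wone_{1}\Vert_{\Hdiv(\Omega_{1})}\lesssim\Vert p^{1}\Vert_{0,\Omega_{1}}$, which contributes $\Vert p^{1}\Vert_{0,\Omega_{1}}^{2}$ to $\B\w(p)$ and nothing to the $p^{2}$-pairing. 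Second, a field realizing $p^{2}$ through the tangential divergence on $\Omega_{2}$ and the normal interface flux, obtained by a Bogovskii construction on $G$ together with Lemma~\ref{Th Surjectiveness from Hdiv to H^1/2} to prescribe on $\Gamma$ a normal trace proportional to $\vert(-\gradt\zeta,1)\vert\,p^{2}$, correcting $p^{2}$ by its weighted mean so that the prescribed flux is admissible; this contributes a term bounded below by $\Vert p^{2}\Vert_{0,\Omega_{2}}^{2}$. Combining the two with an appropriate scaling and estimating the cross terms gives $\B\w(p)\gtrsim\Vert p\Vert_{\Y^{0}}^{2}$ with $\Vert\w\Vert_{\X^{0}}\lesssim\Vert p\Vert_{\Y^{0}}$, which is the desired inf-sup. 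This step is the delicate one because of the three-way coupling between the divergence in $\Omega_{1}$, the tangential divergence in $\Omega_{2}$ and the interface flux, and because $p^{2}$ is only $L^{2}$; I would carry it out as the corresponding argument in \cite{ShowMor17}, transported to the present curved geometry by means of the change-of-variables identities \eqref{Eq reorganized gradient structure}, \eqref{Eq structure divergence canonical basis} and the relations of Remark~\ref{Rem relations of the normal and derivative with respect to the normal}. With (i)--(iii) in hand, Theorem~\ref{Th well posedeness mixed formulation classic} delivers existence, uniqueness and the estimate \eqref{mix-est}; uniqueness then promotes the subsequential convergence established in Section~\ref{sec-convergence} to convergence of the entire family $(\veps,\peps)$.
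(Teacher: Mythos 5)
Your first part coincides with the paper's: the solution property of the limits is obtained by assembling Lemmas \ref{Th Tangential Behavior of the Limit Problem}, \ref{Th Normal behavior of the Limit Problem} and \ref{Th Mass Conservation in the Limit Problem}, using the decomposition $\X^{0}=\X_{\high}^{0}\bigoplus\X_{\tang}$ and linearity; your extra bookkeeping of which terms vanish on each summand is consistent with the definitions of $\X_{\tang}$ and $\X_{\high}^{0}$.

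For well-posedness, however, you take a genuinely different route. You propose to verify the hypotheses of Theorem \ref{Th well posedeness mixed formulation classic} for the limit operators (non-negativity and kernel-coercivity of $\A$, the inf-sup condition for $\B$, $\C=0$). The paper does not do this in the proof of the theorem: it exploits the fact that \emph{existence is already known}, since the weak limits themselves solve \eqref{Eq variational formulation limit problem}, so only uniqueness and continuous dependence remain. These are obtained by testing on the diagonal $[\v,p]$, cancelling the mixed terms, and running the energy estimate \eqref{Ineq Estimate Diagonal Limit}--\eqref{Ineq Continuity Estimate pone}; the bound for $\ptwo$ is extracted not from an inf-sup argument but from the pointwise identity \eqref{Eq higher order term chi identified}, in which every other term is already controlled by the data. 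This completely bypasses the inf-sup condition, which is precisely the step you flag as ``the hard part'' and only sketch (Bogovskii fields in $\Omega_{1}$ and on $G$, plus Lemma \ref{Th Surjectiveness from Hdiv to H^1/2} on $\Gamma$). Your route is the one the paper itself outlines in Section \ref{Sec Mixed Variational Formulation for Limit} as an \emph{independent} well-posedness proof via the operators $A^{0}$, $B^{0}$, deferring the details to the analogous lemmas of \cite{ShowMor17}; it buys a proof that does not presuppose the convergence analysis, at the cost of the delicate three-way inf-sup estimate. The paper's argument is shorter and self-contained but only works because a solution is handed to it by the limit process. If you intend your version to stand as the proof of this theorem, the inf-sup verification cannot remain a sketch: in the curved setting the interface flux carries the weight $\vert(-\gradt\zeta,1)\vert$ and the $\Omega_{2}$-pressure is constant in $z$, so the transported argument from the flat case needs to be written out rather than cited.
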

\begin{proof}
Since $ \left[\v,\,p\,\right] $ satisfies the variational statements \eqref{Eq tangential limit problem with chi}, \eqref{Eq higher order limit equation}, \eqref{Eq limit conservation law in Omega 1}, \eqref{Eq limit conservation law in Omega 2} as shown in Lemmas \ref{Th Tangential Behavior of the Limit Problem}, \ref{Th Normal behavior of the Limit Problem} \ref{Th Mass Conservation in the Limit Problem} respectively, it follows that $[\v,\,p\,]$ satisfies the problem \eqref{Eq variational formulation limit problem} above. 
\newline
\newline
In order to prove that the problem is well-posed we prove continuous dependence of the solution with respect to the data. Test \eqref{Eq variational formulation limit problem 1} with $ \big(\vone,\,\vtwo\big) $ and \eqref{Eq variational formulation limit problem 2} with $ \big(\pone,\,\ptwo\big) $, add them together and get
\begin{multline}\label{Eq limit problem on the diagonal}
\int_{\Omega_{1}}\Q\,\vone\cdot\vone 
+ \int_{\Omega_2} \E\,\gradt\vtwo \bcol\gradt\vtwo 
%
%
+ \int_{\Gamma} \beta \sqrt{\Q} \,\vtang \cdot
\vtang
 \,d S
+ \int_{\Gamma} (\alpha + \Eav)\big(\vone\cdot\n\big)
\big(\vone\cdot\n\big)\,d S
\\
= \int_{\Omega_2} {\mathbf f_{\,\tang}^{\,2}} \cdot
\vtang 
+ \int_{\Omega_{1}}h^{\,1}\,\pone . 
\end{multline}
Applying the Cauchy-Bunyakowsky-Schwarz inequality to the right hand side of the expression above and recalling that $ \vtang $ is constant in the $ z $-direction we get
\begin{equation}\label{Ineq Estimate Diagonal Limit}
\begin{split}
\int_{\Omega_2} {\mathbf f_{\,\tang}^{\,2}} \cdot
\vtang 
+ \int_{\Omega_{1}}h^{\,1}\,\pone  
&\leq 
\big\Vert{\mathbf f_{\,\tang}^{\,2}}\big\Vert_{0,\Omega_{2}}
\big\Vert\vtang\big\Vert_{0,\Omega_{2}}
+ \big\Vert h^{1}\big\Vert_{0,\Omega_{1}}
\big\Vert\pone\big\Vert_{0,\Omega_{1}} 
\\
& \leq 
\big\Vert{\mathbf f_{\,\tang}^{\,2}}\big\Vert_{0,\Omega_{2}}
\big\Vert\vtang\big\Vert_{0,\Gamma}
+ 
\widetilde{C} \, \big\Vert h^{1}\big\Vert_{0,\Omega_{1}}
\big\Vert\grad \pone\big\Vert_{0,\Omega_{1}} 
\\
& 
\leq 
\big\Vert{\mathbf f_{\,\tang}^{\,2}}\big\Vert_{0,\Omega_{2}}
\big\Vert\vtang\big\Vert_{0,\Gamma}
+ 
C\, \big\Vert h^{1}\big\Vert_{0,\Omega_{1}}
\big\Vert \Q \vone\big\Vert_{0,\Omega_{1}} \\
& \leq \widetilde{C} \Big[
\Big\Vert{\mathbf f_{\tang}^{\,2}}\big\Vert_{0,\Omega_{2}}^{\,2}
+ \big\Vert h^{1}\big\Vert_{0,\Omega_{1}}^{\,2}\Big]^{1/2}
\Big[\big\Vert\vtang\big\Vert_{0,\Gamma}^{\,2}
+ \big\Vert\vone\big\Vert_{0,\Omega_{1}}^{\,2}\Big]^{1/2} .
\end{split}
\end{equation}
Here, the second and third inequality holds because $ \pone $ satisfies respectively the drained boundary conditions (Poincar\'e's inequality applies) and Darcy's equation as stated in \eqref{convergence of the pressure in Omega_1}. Finally, the fourth inequality is a new application of the Cauchy-Bunyakowsky-Schwarz inequality for 2-D vectors. Introducing \eqref{Ineq Estimate Diagonal Limit} in \eqref{Eq limit problem on the diagonal} and recalling Hypothesis \eqref{Hyp Bounds on the Coefficients} on the coefficients $ \Q, \alpha, \beta $ and $ \E $ we have
\begin{equation}\label{Ineq First Continuity Estimate}
\Big[ 
\big\Vert\vone\big\Vert_{0,\Omega_{1}}^{\,2}
+ \big\Vert\vone\cdot \n\big\Vert_{\Gamma}^{\,2}
+ \big\Vert\gradt\vtang\big\Vert_{0,\Omega_{2}}^{\,2}
+ \big\Vert\vtang\big\Vert_{0,\Gamma}^{\,2}
\Big]^{1/2}
\leq \widetilde{C} \Big[
\Big\Vert{\mathbf f_{\tang}^{\,2}}\big\Vert_{0,\Omega_{2}}^{\,2} 
+ \big\Vert h^{1}\big\Vert_{0,\Omega_{1}}^{\,2}\Big]^{1/2}.
\end{equation}
Recalling \eqref{Eq velocity one conservation}, the expression above implies that 
\begin{equation}\label{Ineq Continuity Estimate vone}
\big\Vert\vone\big\Vert_{\Hdiv(\Omega_{1})}
\leq \widetilde{C} \Big[
\Big\Vert{\mathbf f_{\tang}^{\,2}}\big\Vert_{0,\Omega_{2}}^{\,2} 
+ \big\Vert h^{1}\big\Vert_{0,\Omega_{1}}^{\,2}\Big]^{1/2}.
\end{equation}
Next, recalling that $ \wtang $ is independent from $ z $ (see \eqref{solutionconvergence}), it follows that $ \big\Vert\vtang\big\Vert_{0,\Gamma} = \big\Vert\vtang\big\Vert_{0,\Omega_{2}} $ and that $ \big\Vert \grad \vtang\big\Vert_{0,\Omega_{2}} = \big\Vert \gradt \vtang\big\Vert_{0,\Omega_{2}} $. Therefore \eqref{Ineq First Continuity Estimate} yields
\begin{equation}\label{Ineq Continuity Estimate vtwo}
\big\Vert\vtwo\big\Vert_{1,\Omega_{2}}
\leq \widetilde{C} \Big[
\Big\Vert{\mathbf f_{\tang}^{\,2}}\big\Vert_{0,\Omega_{2}}^{\,2} 
+ \big\Vert h^{1}\big\Vert_{0,\Omega_{1}}^{\,2}\Big]^{1/2}.
\end{equation}
Again, recalling that $ \pone $ satisfies the Darcy's equation and the drained boundary conditions (Poincar\'e's inequality applies) as stated in \eqref{convergence of the pressure in Omega_1}, the estimate \eqref{Ineq Continuity Estimate vone} implies
\begin{equation}\label{Ineq Continuity Estimate pone}
\big\Vert\pone\big\Vert_{1, \Omega_{1}}
\leq \widetilde{C} \Big[
\Big\Vert{\mathbf f_{\tang}^{\,2}}\big\Vert_{0,\Omega_{2}}^{\,2} 
+ \big\Vert h^{1}\big\Vert_{0,\Omega_{1}}^{\,2}\Big]^{1/2}.
\end{equation}
Next, in order to prove continuous dependence for $\ptwo$ recall \eqref{Eq higher order term chi identified} where it is observed that all the terms are already continuously dependent on the data, then it follows that
\begin{equation}\label{Est Continuity Estimate pone}
\big\Vert \ptwo \big\Vert_{0,\Omega_{2}}
\leq C \Big[
\big\Vert{\mathbf f_{\tang}^{\,2}}\big\Vert_{0,\Omega_{2}}^{2}
+ \big\Vert h^{1}\big\Vert_{0,\Omega_{1}}^{\,2}\Big]^{1/2} .
\end{equation}
Finally, in order to prove the uniqueness of the solution, assume there are two solutions, test the problem \eqref{Eq variational formulation limit problem} with its difference and subtract them. We conclude the difference of solutions must satisfy the problem \eqref{Eq variational formulation limit problem} with null forcing terms which implies, due to \eqref{Ineq Continuity Estimate vone}, \eqref{Ineq Continuity Estimate vtwo} \eqref{Ineq Continuity Estimate pone} and \eqref{Est Continuity Estimate pone} that the difference of solutions is equal to zero, i.e. the solution is unique. Since \eqref{Eq variational formulation limit problem} has a solution, which is unique and it continuously depend on the data, it follows that the problem is well-posed.
\qed
\end{proof}
\begin{corollary}
The weak convergence statements in Corollaries \ref{Th Direct Weak Convergence of Velocities} and \ref{Th Convergence of Pressure One} hold for the whole sequence $ \big((\veps, \peps): \epsilon > 0\big) $ of solutions. 
\end{corollary}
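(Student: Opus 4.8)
The plan is to argue by the classical subsequence principle, exploiting the fact that the limiting problem \eqref{Eq variational formulation limit problem} has a \emph{unique} solution by Theorem \ref{Th Limiting Problem Variational Formulation}. First I would record that the extraction performed in Corollary \ref{Th Direct Weak Convergence of Velocities} and Theorem \ref{Th Convergence of Pressure One} uses \emph{only} boundedness: the global a-priori Estimate \eqref{general a priori estimate} of Theorem \ref{Th A-priori Estimates of Velocity}, together with the bounds on $\div\vepsone$ and on $\pepsone,\pepstwo$ derived along the way, show that the whole family $\big((\veps,\peps):\epsilon>0\big)$ is bounded, uniformly in $\epsilon$, in the relevant separable Hilbert spaces ($\Hdiv(\Omega_1)$ and the $\mathbf{H}^1(\Omega_2)$-type norms for the velocities, $L^2$ for the pressures, and the auxiliary spaces $\Hpartial$, $\Hboldpartial$).

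Next I would take an \emph{arbitrary} subsequence of $\big((\veps,\peps):\epsilon>0\big)$ and run, verbatim, the compactness extractions of Corollary \ref{Th Direct Weak Convergence of Velocities}(i)--(iv) and Theorem \ref{Th Convergence of Pressure One}(i)--(iii) on it. This yields a further subsequence along which every one of the listed weak and strong convergences holds, with limits $[\widehat{\v},\widehat{p}]$ enjoying all the structural identities ($z$-independence of $\vtwo$, $\vtwo\cdot\n=0$, the explicit formula for $\xi$, the identifications of $\boldsymbol{\chi}$ and $\ptwo$, and so on). Passing to the limit along that further subsequence exactly as in Lemmas \ref{Th Tangential Behavior of the Limit Problem}, \ref{Th Normal behavior of the Limit Problem} and \ref{Th Mass Conservation in the Limit Problem}, the pair $[\widehat{\v},\widehat{p}]$ solves the variational problem \eqref{Eq variational formulation limit problem}. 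Since that problem is well-posed, $[\widehat{\v},\widehat{p}]=[\v,p]$, the limit already identified; in particular it is independent of the subsequence chosen.

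Finally I would upgrade to the full family via the subsubsequence characterization of convergence. For the strong $L^2$-convergences ($\pepsone\to\pone$, $\epsilon\,\vepstwo\to\vtwo$ in $\mathbf{L}^2(\Omega_2)$, $\epsilon\,\vnormaleps\to 0$ in $\Hpartial$, etc.) this is the elementary fact that a sequence converges to $x$ once every subsequence has a further subsequence converging to $x$. For the \emph{weak} convergences I would invoke that a bounded sequence in a separable Hilbert space converges weakly to $x$ if and only if every weakly convergent subsequence has limit $x$ (equivalently, the weak topology restricted to a closed bounded ball is metrizable), which is exactly what the previous paragraph supplies. Hence all convergences of Corollaries \ref{Th Direct Weak Convergence of Velocities} and \ref{Th Convergence of Pressure One} hold for the whole family $\big((\veps,\peps):\epsilon>0\big)$. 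The only genuinely delicate point is this last step for the weak convergences, where one must ensure the underlying spaces are separable so the metrizability applies; since $\Hdiv(\Omega_1)$, $\mathbf{H}^1(\Omega_2)$, $L^2(\Omega_2)$, $\Hpartial$ and $\Hboldpartial$ are all separable Hilbert spaces, this presents no obstacle, and everything else is a mechanical repetition of arguments already carried out, with ``a subsequence'' replaced by ``an arbitrary subsequence''.
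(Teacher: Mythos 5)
Your proposal is correct and follows essentially the same route as the paper: every subsequence admits a further subsequence converging (weakly) to a solution of the limiting problem \eqref{Eq variational formulation limit problem}, whose uniqueness (Theorem \ref{Th Limiting Problem Variational Formulation}, together with the uniqueness of the limiting forcing terms from Hypothesis \ref{Hyp Strong Convergence on the Forcing Terms}) forces the whole family to converge. Your added remarks on separability and the metrizability of the weak topology on bounded sets merely make explicit what the paper leaves implicit.
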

\begin{proof}
It suffices to observe that due to Hypothesis \ref{Hyp Strong Convergence on the Forcing Terms} the limiting problem \eqref{Eq variational formulation limit problem} has unique forcing terms. Therefore, any subsequence of the solutions $ \big((\veps, \peps): \epsilon > 0\big) $ would have a weakly convergent subsequence, whose limit is the solution of problem \eqref{Eq variational formulation limit problem} $ (\v, p) $, which is also unique, due to Theorem \ref{Th Limiting Problem Variational Formulation}. Hence, the result follows.
\end{proof}
%
%
%
%
\section{Closing Remarks}\label{Sec Remarks Limiting Problem}
%
%
\noindent We finish the paper highlighting some aspects that were meticulously addressed in \cite{ShowMor17}.
%
%
%
\subsection{A Mixed Formulation for the Limiting Problem}\label{Sec Mixed Variational Formulation for Limit}
%
%
\noindent For an independent well-posedness proof of the problem \eqref{Eq variational formulation limit problem}, define the operators 
\begin{subequations} \label{Def limit operator A chi free}
\begin{align}\label{Def limit operator A 1 chi free}
& A^{0}:\X^{0}\rightarrow (\X^{0})',&
& A^{0}\defining 
\begin{bmatrix}
\Q+\gamma_{\n}'\big[ \alpha + \Eav \big]\,\gamma_{\n} &
0 \\
0
& \gamma_{\tang} ' \beta\,\sqrt{\Q}\, \gamma_{\tang}
- \divt \E \gradt  
\end{bmatrix}
\end{align}
and
\begin{align}\label{Def limit operator A 2 chi free}
& B^{0}:\X_{0}\rightarrow (\Y^{0})' , &
& B^{0}\defining 
\begin{bmatrix}
\div &  0\\
\gamma_{\tang}'\vert (-\gradt \zeta, 1) \vert \gamma_{\n}  & \divt
\end{bmatrix} .
\end{align}
\end{subequations}
Then, the variational formulation of the problem \eqref{Eq variational formulation limit problem} has the following mixed formulation
\begin{equation}\label{Def Limit Problem MIxed Formulation} 
\begin{split} 
%
[\,\v, p\,]\in \X^{0}\times\Y^{0}:
A^{0}\,\v- (B^{0}) '\,p &=\mathbf{f}^{2} , 
\\
%
 B^{0}\,\v &= h^{1} . 
%
\end{split}
\end{equation}
The proof now follows showing that the hypotheses of Theorem \ref{Th well posedeness mixed formulation classic} are satisfied; the strategy is completely analogous to that exposed in Lemma 17, Lemma 18 and Theorem 19 in \cite{ShowMor17}. 
%
%
%
%
%
\subsection{Dimensional Reduction of the Limiting Problem}\label{Sec Dimensional Reduction of the Limiting Problem}
%
%
\noindent It is direct to see that since $ \X_{\tang} $ and $ \Y^{0} $ do not change on the $ z $-direction inside $ \Omega_{2} $, the integrals on this domain can be reduced to integrals on the interface $ \Gamma $. This yields a problem coupled on $ \Omega_{1} \times \Gamma $ equivalent to \eqref{Eq variational formulation limit problem}. To that end  we introduce the following spaces:
%
\begin{subequations}
\begin{equation}\label{Eq space of higher order effects dimensional}
\X_{\high}^{00}\defining\big\{\wone \in  \Hdiv(\Omega_{1})
: \wone\cdot\n\big\vert_{\Gamma}\in L^{2}(\Gamma)\big\} ,
\end{equation}
endowed with the norm \eqref{Eq space of higher order effects norm dimensional} (clearly, $ \X_{\high}^{00} $ is isomorphic to $ \X^{0}_{\high} $ \eqref{Eq space of higher order effects}) and the space
\begin{equation} \label{Def space of tangential velocity 2 dimensional}
\X_{\tang}^{00}\defining
\big\{\wtwo\in \H1bold(\Gamma):
\wtwo\pxthilde \cdot \n\pxthilde = 0 \text{ for all } \xthilde \in G, 
\wtwo = 0 \text{ on } \partial \Gamma \big\} ,
\end{equation}
endowed with its natural norm. 
\end{subequations}
\begin{remark}[The spaces $ L^{2}(\Gamma)$ and $ H^{1}(\Gamma) $]\label{Rem Manifold H1 space}
Since $ \Gamma $ is a surface ($ \R^{N-1} $ manifold) as described by the identity \eqref{omega 2 epsilon}, it is completely characterized by its \textbf{global chart} $ \zeta: G \rightarrow \R  $. Therefore a function $ u : \Gamma \rightarrow \R $, $ \gamma \mapsto u(\gamma) $, can be seen as $ u_{G}: G \rightarrow \R  $, $ \xthilde \mapsto u(\xthilde, \zeta(\xthilde) ) $, with $ G $ being the orthogonal projection of the surface $ \Gamma $ into $ \R^{N -1} $. Identifying $ u $ with $ u_{G} $ allows to well-define integrability and differentiability. Hence, the space $ L^{2} (\Gamma) $ is characterized by the equality: $ \int_{\Gamma} u^{2} dS = \int_{G} u_{G}^{2} \vert (\grad\zeta, 1) \vert d\xthilde $, where $ d\xthilde $ is the Lebesgue measure in $ G \subseteq \R^{N-1} $. In the same fashion, the space $ H^{1} (\Gamma) $ is the closure of the $ C^{1} (\Gamma) $ space in the natural norm $ \Vert u \Vert_{0, \Gamma}^{2} \defining \Vert u \Vert_{0, \Gamma}^{2} + \Vert \gradt u \Vert_{0, \Gamma}^{2} $. (Clearly, $ \gradt $ suffices to store all the differential variation of a function $ u : \Gamma \rightarrow \R $.)
\end{remark}
With the definitions above, define the space of velocities
\begin{subequations}
\begin{equation}\label{Def Velocities Limit Space dimensional}
\X^{00} \defining \big\{ \w + \u : \w \in \X_{\high}^{00},\,  \u \in \X_{\tang}^{00} \big\}, 
\end{equation}
endowed with the natural norm of he space $ \X_{\high}^{0)} \bigoplus \X_{\tang}^{00} $. Next, define the space of pressures by
\begin{equation}\label{Def Pressures Limit Space dimensional}
\Y^{00} \defining  
L^{2}(\Omega_{1})\times L^{2}(\Gamma),
\end{equation}
endowed with its natural norm.
\end{subequations}
Therefore,  the problem \eqref{Eq variational formulation limit problem} is equivalent to
\begin{subequations}\label{Eq variational formulation limit problem dimensional}
\begin{flushleft}
$ \big[\v,\,p\big]\in\X^{00}\times \Y^{00} :$
\end{flushleft}
\begin{multline}  \label{Eq variational formulation limit problem 1 dimensional}
\int_{\Omega_{1}}\Q\,\vone\cdot\wone  
- \int_{\Omega_{1}}\pone\,\div\wone 
-  \int_{\Gamma} \ptwo \, \divt
\wtan 
%
+ \int_{\Gamma} \Eav\,\gradt\vtwo \bcol\gradt\wtwo 
%
%
+ \int_{\Gamma} \beta \sqrt{\Q} \,\vtwo \cdot \wtwo
 \,d S \\
+ \int_{\Gamma} (\alpha + \Eav)\big(\vone\cdot\n\big)
\big(\wone\cdot\n\big)\,d S
%
+ \int_{\Gamma} 
\vert(-\gradt\zeta, \,1)\vert\ptwo \big(\wone\cdot\n\big) d S
%
%
= \int_{\Gamma} \bar{{\mathbf f^{2}} }\cdot
\wtwo , 
\end{multline}
\begin{equation}  \label{Eq variational formulation limit problem 2 dimensional}
\int_{\Omega_1}\div \vone
\varphi^1 
+\int_{\Gamma}  \divt\vtwo\,\varphi^{2} 
-  \int_{\Gamma}  
\vert(-\gradt\zeta, 1)\vert 
\big(\vone\cdot\n\big)\varphi^{2}
\,d S 
= \int_{\Omega_1} h^{1} \, \varphi^1 ,
\end{equation}
\begin{flushright}
for all 
$ \big[\w,\,\Phi\big]\in\X^{00}\times \Y^{00} $,
\end{flushright}
\end{subequations}
where $ \bar{{\mathbf f^{2}} }(\xthilde) \defining \int_{\zeta(\xthilde)}^{\zeta(\xthilde) + 1} {\mathbf f^{2}} dz $.
\begin{remark}[The Brinkman Equation]\label{Rem Brinkman Equation}
Notice that in the equation \eqref{Eq variational formulation limit problem 1 dimensional} the product $ \vtang \cdot \wtang $ has been replaced by $ \vtwo \cdot \wtwo $ (for consistency $ \bar{{\mathbf f_{\tang}^{2}} }\cdot\wtang $ was replaced by $ \bar{{\mathbf f^{2}} }\cdot
\wtwo $). This is done in order to attain a Brinkman-type form in the third, fourth and fifth summands of equation \eqref{Eq variational formulation limit problem 1 dimensional}. Also notice that although $ \vtwo\cdot \n = 0 $ and $ \wtwo\cdot \n = 0 $ the product $ \gradt\vtwo \bcol\gradt\wtwo $ can not be replaced by $ \gradt\vtang \bcol\gradt\wtang $, due to the differential operators (the orthogonal matrix $ U $ depends on $ \xthilde $). This is why we give up expressing the activity on the interface $ \Gamma $ exclusively in terms of tangential vectors, as its is natural to look for.
\end{remark}
%
%
%
%
%
\subsection{Strong Convergence of the Solutions}\label{Sec Strong Convergence}
%
%
\noindent In contrast to the asymptotic analysis in \cite{ShowMor17}, the strong convergence of the solutions can not be concluded. The main reason is the presence of the higher order term $ \boldsymbol{\chi} $, weak limit of $ \partial_{z} \vepstwo $. As it can be seen in the proof of Theorem \ref{Th Tangential Behavior of the Limit Problem}, 
the higher order term $ \boldsymbol{\chi} $ can be removed because the quantifier $ \wtwo $ belongs to $ \X_{\tang} $. However, when testing the problem \eqref{problem fixed geometry} on the diagonal $ [\veps, \peps] $ and adding the equations to get rid of the mixed terms, the quantifier $ \vepstwo $ does not belong to $ \X_{\tang} $. As a consequence, the terms $ \Vert \sqrt{\E}\, \deps (\epsilon\vepstwo)\Vert_{0, \Omega_{2}}^{2} + 
\Vert \E\partial_{z} \vepstwo \Vert_{0, \Omega_{2}} $ contain in its internal structure, inner products of the type 
\begin{equation}\label{Eq Crossed Higher vs Lower terms}
\int_{\Omega_{2}}\E \partial_{z} \vepstwo 
\begin{Bmatrix}
-\grad \zeta \\
1
\end{Bmatrix}\bcol
\grad (\epsilon \, \vepstwo) 
= 
\int_{\Omega_{2}}\E 
\vert(-\grad \zeta ,1) \vert 
\partial_{z} \vepstwo\cdot
\grad (\epsilon \, \vepstwo)  \cdot \n ,
\end{equation}  
which can not be combined/balanced with other terms present in the evaluation of the diagonal. The product above is not guaranteed to pass to the limit $ \int_{\Omega_{2}}\E \vert(-\grad \zeta ,1) \vert 
\boldsymbol{\chi}
\cdot
\grad \vtwo\cdot \n $, because both factors are known to converge weakly, but none has been proved to converge strongly. Such convergence would be ideal since $ \vtwo\in \X_{\tang} $, therefore $ \partial_{\n} \vtwo = \grad \vtwo \cdot \n = 0 $ and the term \eqref{Eq Crossed Higher vs Lower terms} would converge to zero. The latter would yield the strong convergence of the norms for $ \Vert \gradt (\epsilon\vepstwo)\Vert_{0, \Omega_{2}} $ and $ \Vert \partial_{z}\vepstwo \Vert_{0, \Omega_{2}} $ and the desired strong convergence should follow. 
\newline

\noindent More specifically, the surface geometry states that the normal ($ \n $) and the tangential directions ($ \tang $) are the important ones, around which the information should be arranged. On the other hand, the estimates yield its information in terms of $ \xthilde $ ($ T $) and $ z $ ($ N $). Such disagreement has the effect of keeping intertwined the higher order and lower order terms to the extent of allowing to conclude weak, but not strong convergence.
%
%
%
%
\subsection{Ratio of Velocities}
%
%
\noindent The relationship of the velocity in the tangential direction with respect to the velocity in the normal direction is very high and tends to infinity as expected for most of the cases. We know that $\big(\Vert\,\vnormaleps\,\Vert_{0,\Omega_2}:\epsilon>0\big)$ is bounded, therefore
$\Vert\,\epsilon\,\vnormaleps\,\Vert_{0,\Omega_2}=\epsilon\,\Vert\,\vnormaleps\,\Vert_{0,\Omega_2}\rightarrow
0$. Suppose first that $\vtang\neq0$ and consider the
ratios:
\begin{equation*}
\frac{\Vert\,\vtangeps\,\Vert_{0,\,\Omega_2}}{\Vert\,\vnormaleps\,\Vert_{0,\,\Omega_2}}=
\frac{\Vert\,\epsilon\,\vtangeps\,\Vert_{0,\,\Omega_2}}{\Vert\,\epsilon\,\vnormaleps\,\Vert_{0,\,\Omega_2}}
\geq 
\frac{\liminf\Vert\,\epsilon\,\vtangeps\,\Vert_{0,\,\Omega_2}}{\Vert\,\epsilon\,\vnormaleps\,\Vert_{0,\,\Omega_2}}
>
\frac{\Vert\,\vtang\,\Vert_{0,\,\Omega_2}-\delta}
{\Vert\,\epsilon\,\vnormaleps\,\Vert_{0,\,\Omega_2}}>0 .
\end{equation*}
The lower bound holds true for $ \epsilon>0 $ small enough and adequate $ \delta>0 $, then we conclude the quotient of tangent component over normal component $ L^{2} $-norms blows-up to infinity, i.e. the tangential velocity is much faster than the normal one in the thin channel. 
\newline

\noindent If, on the other hand $ \vtang=0 $ nothing can be concluded, since it can not be claimed that $ \vone\cdot\n =0 $ on $ \Gamma $ unless $ \f^{2} =0 $ is enforced, trivializing the activity on $ \Omega_2 $. Therefore, it can only be concluded that $ \big\Vert\vtangeps\big\Vert_{0,\Omega_2}\gg \big\Vert\vnormaleps\big\Vert_{0,\Omega_2} $ for $ \epsilon>0 $ small enough, when $ \vtang \neq 0 $, as discussed above.
%
\section*{Acknowledgements}
%
%
\noindent The Author wishes to acknowledge Universidad Nacional de Colombia, Sede Medell\'in for its support in this work through the project HERMES 27798. The Author also wishes to thank his former PhD adviser, Professor Ralph Showalter from Oregon State University, who trained him in the field of multiscale PDE analysis. Special thanks to Professor Ma\l{}gorzata Peszy\'nska from Oregon State University who was the first to challenge the Author in analyzing curved interfaces and suggested potential techniques and scenarios to address the problem. 
%
%
%
%
%
%
\bibliographystyle{plain}
\def\cprime{$'$} \def\cprime{$'$} \def\cprime{$'$}

\end{document}